\documentclass[12pt,a4paper]{amsart}
\usepackage{amsmath, amssymb, amsfonts, float}
\usepackage[all]{xy}
\usepackage{caption}
\usepackage{enumerate}
\usepackage{comment}
\usepackage{bm}
\usepackage{graphicx}
\usepackage[breaklinks=true]{hyperref}
\usepackage{mathpazo}
\usepackage{xcolor}
\usepackage{multicol}
\usepackage{tabularx}
\usepackage{a4wide}
\usepackage[capitalize]{cleveref}

\usepackage{hyperref}

\usepackage[normalem]{ulem}

\setlength{\textheight}{9.5in}

\pagestyle{plain}

\theoremstyle{plain}
\newtheorem{thm}{Theorem}[section]

\newtheorem{lem}[thm]{Lemma}
\newtheorem{cor}[thm]{Corollary}
\newtheorem{prop}[thm]{Proposition}

\newtheorem{conj}[thm]{Conjecture}

\theoremstyle{definition}
\newtheorem{definition}[thm]{Definition}
\newtheorem{ex}[thm]{Example}

\newtheorem{question}[thm]{Question}

\newtheorem{rmk}[thm]{Remark}

\makeatletter
\newcommand{\neutralize}[1]{\expandafter\let\csname c@#1\endcsname\count@}
\makeatother

\renewcommand{\baselinestretch}{1.25}
\newcommand{\doublespaced}{\renewcommand{\baselinestretch}{2}\normalfont}
\newcommand{\singlespaced}{\renewcommand{\baselinestretch}{1}\normalfont}
\newcommand{\draftspaced}{\doublespaced} 

\newcommand{\uh}{\underline{\underline{H}}}
\newcommand{\ldr}{\langle D \rangle}
\newcommand{\al}{\alpha}
\newcommand{\p}{\partial}
\newcommand{\e}{\epsilon}
\newcommand{\eq}[2]{\begin{equation}\label{#1}#2 \end{equation}}
\newcommand{\ml}[2]{\begin{multline}\label{#1}#2 \end{multline}}
\newcommand{\ga}[2]{\begin{gather}\label{#1}#2 \end{gather}}
\newcommand{\gr}{{\rm gr}}
\newcommand{\mc}{\mathcal}
\newcommand{\mb}{\mathbb}
\newcommand{\surj}{\twoheadrightarrow}
\newcommand{\inj}{\hookrightarrow}

\newcommand{\red}{{\rm red}}
\newcommand{\cd}{{\rm cd}}
\newcommand{\ed}{{\rm ed}}
\newcommand{\codim}{{\rm codim}}
\newcommand{\rank}{{\rm rank}}
\newcommand{\Pic}{{\rm Pic}}
\newcommand{\Quot}{{\rm Quot}}
\newcommand{\Div}{{\rm Div}}
\newcommand{\Hom}{{\rm Hom}}
\newcommand{\im}{{\rm im}}
\newcommand{\Spec}{{\rm Spec \,}}
\newcommand{\Sing}{{\rm Sing}}
\newcommand{\Char}{{\rm char}}
\newcommand{\Tr}{{\rm Tr}}
\newcommand{\trdeg}{{\rm trdeg}}
\newcommand{\Gal}{{\rm Gal}}
\newcommand{\Min}{{\rm Min \ }}
\newcommand{\Max}{{\rm Max \ }}
\newcommand{\rt}{{\rm res \ Tr}}
\newcommand{\trace}{{\rm Tr}}
\newcommand{\sym}{\text{Sym}}
\newcommand{\sA}{{\mathcal A}}
\newcommand{\sB}{{\mathcal B}}
\newcommand{\sC}{{\mathcal C}}
\newcommand{\sD}{{\mathcal D}}
\newcommand{\sE}{{\mathcal E}}
\newcommand{\sF}{{\mathcal F}}
\newcommand{\sG}{{\mathcal G}}
\newcommand{\sH}{{\mathcal H}}
\newcommand{\sI}{{\mathcal I}}
\newcommand{\sJ}{{\mathcal J}}
\newcommand{\sK}{{\mathcal K}}
\newcommand{\sL}{{\mathcal L}}
\newcommand{\sM}{{\mathcal M}}
\newcommand{\sN}{{\mathcal N}}
\newcommand{\sO}{{\mathcal O}}
\newcommand{\sP}{{\mathcal P}}
\newcommand{\sQ}{{\mathcal Q}}
\newcommand{\sR}{{\mathcal R}}
\newcommand{\sS}{{\mathcal S}}
\newcommand{\sT}{{\mathcal T}}
\newcommand{\sU}{{\mathcal U}}
\newcommand{\sV}{{\mathcal V}}
\newcommand{\sW}{{\mathcal W}}
\newcommand{\sX}{{\mathcal X}}
\newcommand{\sY}{{\mathcal Y}}
\newcommand{\sZ}{{\mathcal Z}}
\newcommand{\A}{{\mathbb A}}
\newcommand{\B}{{\mathbb B}}
\newcommand{\C}{{\mathbb C}}
\newcommand{\D}{{\mathbb D}}
\newcommand{\E}{{\mathbb E}}
\newcommand{\F}{{\mathbb F}}
\newcommand{\G}{{\mathbb G}}
\renewcommand{\H}{{\mathbb H}}
\newcommand{\J}{{\mathbb J}}
\newcommand{\M}{{\mathbb M}}
\newcommand{\N}{{\mathbb N}}
\renewcommand{\P}{{\mathbb P}}
\newcommand{\Q}{{\mathbb Q}}
\newcommand{\R}{{\mathbb R}}
\newcommand{\T}{{\mathbb T}}
\newcommand{\U}{{\mathbb U}}
\newcommand{\V}{{\mathbb V}}
\newcommand{\W}{{\mathbb W}}
\newcommand{\X}{{\mathbb X}}
\newcommand{\OO}{{\mathcal O}}

\newcommand{\Y}{{\mathbb Y}}
\newcommand{\Z}{{\mathbb Z}}
\newcommand{\mh}{{\bf mh \ }}
\newcommand{\MIC}{\mbox{MIC}}
\newcommand{\Mod}{\text{\sf Mod}}
\newcommand{\vect}{\text{\sf vect}}
\newcommand{\Vect}{\text{\sf Vect}}
\newcommand{\Rep}{\text{\sf Rep}}
\newcommand{\id}{{\rm id\hspace{.1ex}}}
\newcommand{\Ind}{\text{\rm Ind-}}
\newcommand{\ord}{\text{\rm ord}}
\newcommand{\mFe}{{\tilde{F}_{\Delta}}}
\newcommand{\Fe}{{\tilde{F}_{\Delta}}}
\newcommand{\rcite}{\textcolor{red}{cite}}

\def\Sp{\Spec}
\newcommand{\res}{{\rm res \hspace{.1ex} }}
\newcommand{\ind}{{\text{\sf ind}\hspace{.1ex}}}
\newcommand{\tung}[1]{{\color{blue}#1}}

\begin{document}
\title{On the arithmetic of \\ generalized Fekete polynomials }
\date{}
\dedicatory{Dedicated to Professor Kazuya Kato on the occasion of his 70th birthday }

 \author{ J\'an Min\'a\v{c}, Tung T. Nguyen, Nguy$\tilde{\text{\^{e}}}$n Duy T\^{a}n }
\address{Department of Mathematics, Western University, London, Ontario, Canada N6A 5B7}
\email{minac@uwo.ca}
\date{\today}

 \address{Department of Mathematics, Western University, London, Ontario, Canada N6A 5B7}
 \email{tungnt@uchicago.edu}
 
  \address{
 School of Applied  Mathematics and 	Informatics, Hanoi University of Science and Technology, 1 Dai Co Viet Road, Hanoi, Vietnam } 
\email{tan.nguyenduy@hust.edu.vn}

\thanks{JM is partially supported by the Natural Sciences and Engineering Research Council of Canada (NSERC) grant R0370A01. He gratefully acknowledges the Western University Faculty of Science Distinguished Professorship 2020-2021. NDT is funded by Vingroup Joint Stock Company and supported by Vingroup Innovation Foundation (VinIF) under the project code VINIF.2021.DA00030
}
\keywords{Fekete polynomials, special values of $L$-functions.}
\subjclass[2020]{Primary 11C08, 11R09}
\begin{abstract}
For each prime number $p$ one can associate a Fekete polynomial with coefficients $-1$ or $1$ except the constant term, which is 0. These are classical polynomials that have been studied extensively in the framework of analytic number theory. In a recent paper, we showed that these polynomials also encode interesting arithmetic information. In this paper, we define generalized Fekete polynomials associated with quadratic characters whose conductors could be composite numbers. We then investigate the appearance of cyclotomic factors in these generalized Fekete polynomials. Based on this investigation, we introduce a compact version of Fekete polynomials as well as their trace polynomials. We then study the Galois groups of these Fekete polynomials using modular techniques. In particular,  we discover some surprising extra symmetries which imply some restrictions on the corresponding Galois groups. Finally, based on both theoretical and numerical data, we propose a precise conjecture on the structure of these Galois groups. 

\end{abstract}

\maketitle

\tableofcontents
\section{Introduction} 

Fekete polynomials are classical polynomials whose history can be traced to the 19th century in relation to the studies of Dirichlet $L$-functions. In fact, as explained very nicely in \cite[Chapter 10, Section 3]{[Lemmermeyer1]} these polynomials already played a significant role in Gauss's original sixth proof of the quadratic reciprocity law. 

We recall that for each prime $p$, the Fekete polynomial $F_p(x)$ associated with $p$ is defined by 
\[ F_p(x)=\sum_{a=1}^{p-1} \left(\frac{a}{p} \right) x^a .\] 
Here $\left(\dfrac{a}{p}\right)$ is the Legendre symbol which is equal to, for $1\leq a\leq p-1$, the value $1$ if $a$ is a quadratic residue modulo $p$ and the value -1 if $a$ is not a quadratic residue modulo $p$.

In our previous work \cite{[MTT3]}, we have studied the arithmetic of these Fekete polynomials. We also described briefly some early history of Fekete polynomials and their role in the studies of some $L$-functions. Additionally, we also introduced their cousin which we called reduced Fekete polynomials $g_p(x)$. Our main theorems in \cite{[MTT3]} show that these polynomials contain interesting arithmetic information such as the class numbers or the orders of certain $K$-groups. Furthermore, our experimental data suggests that the Galois group of $f_{p}(x)$ and $g_p(x)$ are as large as possible (see \cite[Conjecture 4.9, Conjecture 4.13]{[MTT3]}).

F. Lemmermeyer in \cite[Page 231]{[Lemmermeyer1]} naturally extended Fekete polynomials for general Dirichlet characters $\chi.$ When $\chi$ is a primitive quadratic Dirichlet character with conductor equals to a prime number $p$, Lemmermeyer's definition specializes to $F_p(x)$ that we have recalled above. Note that when $p=2$, $F_2(x)=x$ and hence we can restrict our attention to $p$ odd. We remark also that these Fekete polynomials also appear implicitly in the work of Baker and Montgomery about quadratic $L$-functions (see \cite[Page 24]{[BM]}). 

In this paper, we adopt Lemmermeyer's approach. We shall call these polynomials generalized Fekete polynomials $F_{\Delta}(x)$. These are polynomials associated with a (quadratic) character $\chi=\chi_{\Delta}$ where the conductor $D=|\Delta|$ of $\chi$ is not necessarily a prime number (see \ref{def:fekete} for the precise definition). We will study the factorization of $F_{\Delta}(x)$, which leads to the definitions of the generalized Fekete polynomial $f_{\Delta}(x)$ and their reduced version $g_{\Delta}(x)$. In this present article, we will mostly focus on the precise determination of $f_{\Delta}(x)$ and $g_{\Delta}(x)$ and their Galois groups. Further arithmetical properties of $f_{\Delta}(x)$ and $g_{\Delta}(x)$ will be discussed in a separate paper in preparation. We shall see that studying these polynomials presents a rich opportunity to extend considerably arithmetic and Galois theoretic considerations of a new class of polynomials with interesting properties. We make considerable effort to make our article as self-contained as possible and we provide all key definitions and motivations. Some of our proofs are involved and intricate and we attempt to explain them with all details in a lucid way. We hope that our article opens a new interesting area of number theory and related areas. In particular, our investigation in the last section about the conjectural Galois groups of Fekete's polynomials suggests that it would be interesting to study the irreducibility of these polynomials using their special values. For some related references regarding this approach see \cite{[irr1], [irr2], [irr3], [irr4]}.

The structure of this article is as follows. In \cref{sec:background}, we recall some basic definitions in algebraic number theory including the Legendre, Jacobi, and Kronecker symbols and their relations with quadratic fields. We will also prove some lemmas about $\Q$-bases of cyclotomic extensions. In \cref{sec:fekete}, we introduce the generalized Fekete polynomial $F_{\Delta}(x)$ and their modified version when the conductor is even. Using the theory of Gauss sums, we discuss some cyclotomic factors of the generalized Fekete polynomials. \cref{sec:multiplicy_Phi1_Phi2} will focus on the multiplicity of $x=1$ and $x=-1$ in $F_{\Delta}(x).$ \cref{sec:general_n} and \cref{sec:n_not_divisor} will discuss some partial results about the appearance of cyclotomic factors in $F_{\Delta}(x).$ We will then focus exclusively on the case where $\Delta$ has a few prime factors. More specifically, we will discuss the case $\Delta \in \{4p, -4p, 3p, -3p \}$ in Section 7-10. This is done by an extensive study of cyclotomic factors of $F_{\Delta}(x).$ The calculations in these sections lead to the precise definition of the Fekete polynomial $f_{\Delta}(x)$ and its trace polynomial $g_{\Delta}(x)$. In the final section, we provide some numerical data for the Galois groups of $f_{\Delta}(x)$ and $g_{\Delta}(x).$ In particular, we discuss some cases where a surprising extra symmetry appears, hence putting some restrictions on the Galois group of $f_{\Delta}(x)$. Additionally, we discuss some tests to detect the Galois group of a reciprocal polynomial, which might be of independent interest. Finally, we propose a precise conjecture on the structure of the Galois groups of $f_{\Delta}(x)$ and $g_{\Delta}(x)$ based on our experimental data. 

\section{Background in algebraic number theory} 
\label{sec:background}
\subsection{Legendre symbol, Jacobi symbol, and Kronecker symbol} 
In this section, we recall the definition and basic properties of the Legendre, Jacobi, and Kronecker symbols. Our main references for this section are \cite{[LMFDB]} and \cite[Chapter 9]{[MV]}. 

We first recall the definition of the Legendre symbol. Let $a$ be an integer and $p$ a prime number. The Legendre symbol $\left(\dfrac{a}{p} \right)$ is defined as follows. 
\[ \left(\frac{a}{p} \right)  = \begin{cases}
   0 & \text{if $p|a$ } \\ 
  1  &  \text{if $a$ is a square modulo $p$ } \\
  -1 &  \text{else. }
\end{cases} \]  
The Legendre symbol is multiplicative. In fact, it is a Dirichlet character with conductor $p$. The Jacobi symbol $\left(\frac{a}{b} \right)$, where $b$ is an odd positive integer, is a generalization of the Legendre symbol. Specifically, suppose that $b$ has the following prime factorization $ b= p_1^{e_1} p_2^{e_2} \ldots p_r^{e_r}.$  Then the Jacobi symbol $\left(\dfrac{a}{b} \right)$ is defined as follow 
\[ \left(\frac{a}{b} \right) = \left(\frac{a}{p_1} \right)^{e_1} \left(\frac{a}{p_2} \right)^{e_2} \ldots \left(\frac{a}{p_r} \right)^{e_r} , \]
where $\left(\dfrac{a}{p_i} \right)$ is the Legendre symbol. Finally, we recall the Kronecker symbol, which generalizes both the Legendre and the Jacobi symbols.  To do so, we first define the following conventions
\[ \left(\frac{a}{-1} \right) = \begin{cases}
  1  &  \text{if } a \geq 0 \\
  -1 &  \text{if } a<0
\end{cases}, \; 
\left(\frac{a}{2} \right) = \begin{cases}
  0  &  \text{if $2|a$ }  \\
  1 &  \text{if } a \equiv \pm{1} \pmod{8} \\ 
  -1 & \text{if } a \equiv \pm{3} \pmod{8}
\end{cases}, \;
 \left(\frac{a}{0} \right) = \begin{cases}
  1  &  \text{if } a = \pm{1} \\
  0 &  \text{otherwise. }
\end{cases} \] 
Let $a, n$ be integers. Suppose that $n$ has the following factorization into the product of distinct prime numbers 
\[ n = \text{sgn}(n) p_1^{e_1} p_2^{e_2} \ldots p_r^{e_r} .\] 
Here $\text{sgn}(n)$ is the sign of $n$, which is $1$ if $n>0$ and $-1$ otherwise. Then, the Kronecker symbol $\left(\frac{a}{n} \right)$ is defined as 
\[ \left(\frac{a}{n} \right) = \left(\frac{a}{\text{sgn}(n)} \right) \left(\frac{a}{p_1} \right)^{e_1} \left(\frac{a}{p_2} \right)^{e_2} \ldots \left(\frac{a}{p_r} \right)^{e_r} .\]   
We note that the Kronecker symbol is defined for all integers $a,n$. 

\subsection{Quadratic characters} 
Let $d$ be a squarefree integer. Let $\Delta$ be the discriminant of the quadratic extension $\Q(\sqrt{d})/\Q$, which is given by 
\[ \Delta = \begin{cases}
  d  &  \text{if } d \equiv 1 \pmod{4} \\
  4d &  \text{if } d \equiv 2, 3 \pmod{4}.
\end{cases} \] 
From this definition, we can see that $\Delta$ necessarily determines the quadratic extension $\Q(\sqrt{d})/\Q.$ We have the following theorem. 
\begin{thm} (\cite[Theorem 9.13]{[MV]})
Let $\chi_{\Delta}: \Z \to \mathbb{C}^{\times}$ be the function given by 
\[ \chi_{\Delta}(a) = \left(\frac{\Delta}{a} \right) ,\] 
where $\left(\frac{\Delta}{a} \right)$ is the Kronecker symbol introduced in the previous section. Then $\chi_{\Delta}$ is a primitive quadratic character of conductor $D=|\Delta|.$ Furthermore, every primitive quadratic character is given uniquely this way. 
\end{thm}

\subsection{Cyclotomic polynomials}
We recall some properties of cyclotomic polynomials that we will use throughout this article. For a more detailed treatment, interested readers are encouraged to see \cite[Sections 46 and 48]{[Nagell]}. For a more recent survey on both coefficients and higher-order derivatives of cyclotomic polynomials, we recommend \cite{[cyclotomic]}.

Let $n$ be a positive integer. The $n$-th cyclotomic polynomial $\Phi_n(x.)$ is defined as 
\[ \Phi_n(x) = \prod_{\zeta}(x-\zeta) ,\]
where $\zeta$ runs over the set of all primitive $n$-roots of unity. It is known that $\Phi_n(x)$ is a polynomial of degree $\varphi(n)$ where $\varphi(n)$ is the Euler totient function. Additionally, we have the following factorization 
\[ x^n-1 =\prod_{d|n} \Phi_d(x). \] 
We have the following identities. 
\begin{prop}(\cite[Section 46]{[Nagell]}) \label{prop:cyclotomic}

Let $n$ be a positive integer and $p$ a prime number. We have the following statements. 
\begin{enumerate}
    \item If $\gcd(p,n)=1$, then 
    $\Phi_{np}(x) \Phi_n(x)= \Phi_n(x^p).$
    \item If $p|n$ then $\Phi_{np}(x)=\Phi_n(x^p).$
    \item If $n$ is odd and $n>1$ then  $\Phi_{2n}(x)= \Phi_n(-x),$ and $\Phi_{4n}(x)=\Phi_{2n}(x^2)=\Phi_n(-x^2).$
    \item Additionally 
    $\Phi_4(x)=x^2+1 = \Phi_2(x^2) =- \Phi_1(-x^2).$
\end{enumerate}
\end{prop}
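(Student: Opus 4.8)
\emph{Proof proposal.} The plan is to prove all four identities by working directly with roots of unity, exploiting the fact that every cyclotomic polynomial is monic and separable (it divides $x^N-1$ in characteristic zero), so that two such polynomials coincide as soon as they are monic, of the same degree, and have the same set of roots.

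For (1) and (2), note that $\beta$ is a root of $\Phi_n(x^p)$ exactly when $\beta^p$ is a primitive $n$-th root of unity, i.e.\ when $\beta$ is a root of unity of some order $m$ with $m/\gcd(m,p)=n$. First I would determine which values of $m$ can occur. If $\gcd(p,n)=1$, then necessarily $m\in\{n,np\}$, and conversely every primitive $n$-th and every primitive $np$-th root of unity does satisfy the condition; hence the root set of $\Phi_n(x^p)$ is the disjoint union of the primitive $n$-th and the primitive $np$-th roots of unity. Since $\varphi(np)=(p-1)\varphi(n)$, a degree count gives $\deg\Phi_n(x^p)=p\varphi(n)=\varphi(n)+\varphi(np)$, and separability forces $\Phi_n(x^p)=\Phi_n(x)\Phi_{np}(x)$. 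If instead $p\mid n$, the same analysis excludes $m=n$ and leaves only $m=np$, so every root of $\Phi_n(x^p)$ is a primitive $np$-th root of unity; writing $n=p^a m_0$ with $p\nmid m_0$ one checks $\varphi(np)=p\varphi(n)$, and the degree count plus separability gives $\Phi_n(x^p)=\Phi_{np}(x)$.

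For (3), with $n$ odd and $n>1$, I would show that $\zeta\mapsto-\zeta$ is a bijection from the primitive $n$-th roots of unity onto the primitive $2n$-th roots of unity: if $\zeta$ has order $n$ then $(-\zeta)^k=1$ is impossible for odd $k$ (as $-1$ has even order) and holds for even $k$ only when $n\mid k$, so $-\zeta$ has order $2n$; conversely if $\beta$ has order $2n$ then $\beta^n=-1$, so $-\beta=\beta^{n+1}$ has order $2n/\gcd(2n,n+1)=2n/2=n$ because $n+1$ is even and $\gcd(2n,n+1)\mid 2$. Therefore
\[
\Phi_{2n}(x)=\prod_{\mathrm{ord}(\zeta)=n}(x+\zeta)=(-1)^{\varphi(n)}\prod_{\mathrm{ord}(\zeta)=n}\bigl((-x)-\zeta\bigr)=(-1)^{\varphi(n)}\Phi_n(-x),
\]
and since $\varphi(n)$ is even for $n>1$ this yields $\Phi_{2n}(x)=\Phi_n(-x)$. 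Applying part (2) with the pair $(2n,2)$ (valid since $2\mid 2n$) gives $\Phi_{4n}(x)=\Phi_{2n}(x^2)$, and substituting $x^2$ for $x$ in the identity just proved gives $\Phi_{2n}(x^2)=\Phi_n(-x^2)$. Finally, (4) is a direct check: the primitive $4$-th roots of unity are $\pm i$, so $\Phi_4(x)=x^2+1$, while $\Phi_2(x^2)=x^2+1$ and $-\Phi_1(-x^2)=-(-x^2-1)=x^2+1$.

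The one place that genuinely requires care is the order bookkeeping: in (1) and (2) one must correctly identify which $m$ with $m/\gcd(m,p)=n$ arise — this is precisely where the hypotheses $\gcd(p,n)=1$ versus $p\mid n$ are used — and in (3) one must verify that $-\beta$ has order exactly $n$ rather than a proper divisor, together with the parity fact that $\varphi(n)$ is even for $n>1$, which is what removes the sign $(-1)^{\varphi(n)}$. Everything else reduces to degree counting and separability.
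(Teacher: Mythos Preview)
Your argument is correct. The paper does not supply its own proof of this proposition but simply cites Nagell's textbook for these standard identities, so there is no in-paper approach to compare against; your direct root-of-unity argument (with the degree/separability bookkeeping and the parity observation $\varphi(n)$ even for odd $n>1$) is the standard one and is sound.
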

Here are some examples of cyclotomic polynomials. 
\begin{ex} \label{ex:cyclotomic_polynomials}
$ \Phi_1(x)=x-1, \Phi_2(x)=x+1.$
If $p$ is an odd prime number then 
\[ \Phi_p(x)=\frac{x^p-1}{x-1} = \sum_{k=0}^{p-1} x^k,
\Phi_{2p}(x)=\Phi_p(-x),
\Phi_{4p}(x)=\Phi_p(-x^2),
\] 
\end{ex}

We have the following observation. 
\begin{lem} \label{lem:reciprocal} 
$\Phi_1(x)^2$ and $\Phi_n(x)$, for $n \geq 2$,   are reciprocal polynomials. 
\end{lem}

\begin{proof}
For the first statement, we note that $\Phi_1(x)^2 = (x-1)^2=x^2-2x+1$. This is clearly a reciprocal polynomial.  If $n=2$ then 
$\Phi_2(x)=x+1$. This is also clearly a reciprocal polynomial. Let us assume now that $n >2$. By definition, we have 
$ \Phi_n (x)= \prod_{\zeta}(x-\zeta),$ 
where $\zeta$ runs over the set of all primitive $n$-root of unity. Therefore 
\begin{align*} 
x^{\varphi(n)} \Phi_n \left(\frac{1}{x} \right) &= x^{\varphi(n)} \prod_{\zeta} (\frac{1}{x}-\zeta) = \prod_{\zeta} (1-x \zeta) 
= (-1)^{\varphi(n)} (\prod_{\zeta} \zeta) \prod_{\zeta} (x-\zeta^{-1}). 
\end{align*} 
Note that  $\zeta$ is a primitive $n$-root of unity if and only if $\zeta^{-1}$ is. Furthermore, as long as $n>2$, $\zeta \neq \zeta^{-1}.$ Therefore, we can partition the set of all primitive $n$-root of unity into pairs $\{\zeta, \zeta^{-1} \}.$ This shows that $\varphi(n)$ is even, $ \prod_{\zeta} \zeta =1$, and 
$\prod_{\zeta}(x-\zeta)= \prod_{\zeta} (x- \zeta^{-1}).$ 
We then conclude that 
$x^{\varphi(n)} \Phi_n \left( \frac{1}{x} \right) = \Phi_n(x).$ 
In other words, $\Phi_n(x)$ is a reciprocal polynomial. 
\end{proof}

Next, we discuss some results about $\Q$-bases of the cyclotomic extension $\Q(\zeta_n)/\Q.$ We first have the following classical theorem.
\begin{lem} \label{lem:cyclotomic_basis_square_free}
Let $n$ be a squarefree positive integer. Then the set of all primitive $n$-roots of unity, namely $\{ \zeta_n^k | \gcd(k,n)=1 \}$, is $\Q$-linear independent. 
\end{lem}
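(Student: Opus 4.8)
The plan is to induct on the number $r$ of distinct prime factors of the squarefree integer $n$, exploiting the fact that for coprime $m$ and $p$ the group $\mu_{mp}$ of $(mp)$-th roots of unity splits as $\mu_m \times \mu_p$. When $r=0$ we have $n=1$ and the set is just $\{1\}$, which is trivially $\Q$-linearly independent; this is the base case. (The case $r=1$, i.e. $n=p$ prime, can be spelled out as an illustration, but it is already subsumed by the inductive step applied to $m=1$.)

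For the inductive step I would write $n=mp$ with $p$ a prime not dividing the squarefree integer $m$, so that $m$ has $r-1$ prime factors. First I would record the combinatorial description of the primitive $n$-th roots of unity. Since $\gcd(m,p)=1$, the multiplication map $\mu_m\times\mu_p\to\mu_n$, $(\alpha,\beta)\mapsto\alpha\beta$, is an isomorphism of groups (it is a homomorphism between groups of the same order $n$, and is injective because $\mu_m\cap\mu_p=\{1\}$), and $\alpha\beta$ has order $n$ precisely when $\alpha$ has order $m$ and $\beta$ has order $p$. Hence, fixing primitive roots $\zeta_m$ and $\zeta_p$, the set of primitive $n$-th roots of unity is exactly $\{\zeta_m^a\zeta_p^b : \gcd(a,m)=1,\ 1\le b\le p-1\}$, and these $\varphi(m)(p-1)=\varphi(n)$ elements are pairwise distinct.

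Now suppose $\sum_{a,b} c_{a,b}\,\zeta_m^a\zeta_p^b=0$ with $c_{a,b}\in\Q$, and rewrite it as $\sum_{b=1}^{p-1}\left(\sum_a c_{a,b}\,\zeta_m^a\right)\zeta_p^b=0$, a relation with coefficients in $\Q(\zeta_m)$. The key point is that $\{\zeta_p,\zeta_p^2,\dots,\zeta_p^{p-1}\}$ is a $\Q(\zeta_m)$-basis of $\Q(\zeta_n)$: the set $\{1,\zeta_p,\dots,\zeta_p^{p-2}\}$ spans $\Q(\zeta_n)=\Q(\zeta_m)(\zeta_p)$ over $\Q(\zeta_m)$ because $\zeta_p$ satisfies $\Phi_p$ of degree $p-1$, it has exactly $p-1$ elements, and $[\Q(\zeta_n):\Q(\zeta_m)]=\varphi(n)/\varphi(m)=p-1$ by the standard degree formula for cyclotomic fields, so it is a basis; then the relation $1+\zeta_p+\cdots+\zeta_p^{p-1}=0$ shows $1\in\operatorname{span}\{\zeta_p,\dots,\zeta_p^{p-1}\}$, so this latter set of $p-1$ elements also spans $\Q(\zeta_n)$ and is therefore a basis as well. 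Consequently each coefficient $\sum_a c_{a,b}\,\zeta_m^a$ vanishes, and the inductive hypothesis applied to the squarefree integer $m$ forces $c_{a,b}=0$ for all $a,b$, completing the induction.

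The only genuinely nontrivial ingredient here is the degree formula $[\Q(\zeta_n):\Q]=\varphi(n)$ (equivalently, the irreducibility of $\Phi_n$ over $\Q$), which I would simply cite; everything else is bookkeeping with the group isomorphism $\mu_m\times\mu_p\cong\mu_{mp}$ together with the elementary observation that one may trade the basis vector $1$ for $\zeta_p^{p-1}$ using $\Phi_p(\zeta_p)=0$, so I expect the write-up to be short. An essentially equivalent alternative avoids the subfield tower altogether: identify $\Q(\zeta_n)$ with the tensor product $\bigotimes_i \Q(\zeta_{p_i})$ over the prime factors $p_i$ of $n$ (again using $\gcd(p_i,p_j)=1$), and use that the family of pure tensors built from $\Q$-linearly independent families in each factor is $\Q$-linearly independent, reducing at once to the prime case.
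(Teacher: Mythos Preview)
Your proof is correct. The paper does not supply its own proof of this lemma but instead cites \cite{[Independent]} and Conrad's argument in \cite{[Conrad]}; your induction on the number of prime factors, using $\Q(\zeta_{mp})=\Q(\zeta_m)(\zeta_p)$ with $[\Q(\zeta_{mp}):\Q(\zeta_m)]=p-1$ and the basis swap $1\leftrightarrow\zeta_p^{p-1}$ via $\Phi_p(\zeta_p)=0$, is precisely the Conrad-style argument the paper alludes to and then adapts for the subsequent lemmas.
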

A proof of this lemma can be found in \cite{[Independent]}. Interested readers are encouraged to look at the thread \cite{[Conrad]} on mathoverflow for another proof provided by Keith Conrad as well as some interesting historical discussions. We will use Conrad's argument to prove the following lemmas.

\begin{lem}
Let $p$ be an odd prime. Then the set $\{\zeta_{4p}^k\mid  1\leq k\leq 2p,(k,2p)=1\}$ is $\Q$-linearly independent.
\end{lem}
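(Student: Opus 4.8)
The plan is to reduce the statement to Lemma~\ref{lem:cyclotomic_basis_square_free} (the squarefree case) via the identity $\Phi_{4p}(x) = \Phi_p(-x^2)$ from Proposition~\ref{prop:cyclotomic}, following Conrad's style of argument. The set in question has $\varphi(4p) = \varphi(4)\varphi(p) = 2(p-1)$ elements, which matches $\deg \Phi_{4p}$, so $\Q$-linear independence is equivalent to saying these powers of $\zeta_{4p}$ form a $\Q$-basis of $\Q(\zeta_{4p})$. The indices $k$ with $1 \le k \le 2p$ and $(k,2p)=1$ are precisely the odd integers in $[1,2p]$ that are coprime to $p$; note $\zeta_{4p}^2 = \zeta_{2p}$ is a primitive $2p$-th root of unity, and $2p$ is squarefree.

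First I would set $\eta = \zeta_{4p}$, so $\eta^2 = \zeta_{2p}$, and observe that every exponent $k$ in our index set is odd, so write $k = 2j+1$ with the $j$ ranging over an appropriate set; then $\eta^k = \eta \cdot (\eta^2)^j = \eta \cdot \zeta_{2p}^j$. I would check that as $k$ runs over $\{1 \le k \le 2p : (k,2p)=1\}$, the residues $j \bmod 2p$ (equivalently, the elements $\zeta_{2p}^j$) run exactly over the primitive $2p$-th roots of unity — this is a counting/coprimality check, using that $k$ odd and coprime to $p$ forces $2j+1$ coprime to $2p$, hence (by a short computation modulo $2$ and modulo $p$) that $j$ lands in the right congruence classes, and the cardinalities $\varphi(2p) = p-1$ agree on both sides. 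Suppose $\sum_k c_k \eta^k = 0$ with $c_k \in \Q$. Factoring out $\eta$ gives $\eta \sum_j c_{2j+1} \zeta_{2p}^j = 0$, hence $\sum_j c_{2j+1} \zeta_{2p}^j = 0$, a $\Q$-linear relation among the primitive $2p$-th roots of unity. By Lemma~\ref{lem:cyclotomic_basis_square_free} (applicable since $2p$ is squarefree), all coefficients vanish, so all $c_k = 0$.

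The main obstacle — really the only place requiring care — is the bijection step: verifying that the map $k \mapsto (k-1)/2 \bmod 2p$ carries $\{1 \le k \le 2p : (k,2p)=1\}$ onto a complete set of residues representing the primitive $2p$-th roots of unity, i.e. onto $\{j \bmod 2p : (j, 2p) = 1\}$. One must be slightly careful because $k$ ranges over a genuine interval of length $2p$ rather than over residues mod $4p$, and because the condition "$k$ odd, $(k,p)=1$" translates into a condition on $j$ that mixes the mod-$2$ and mod-$p$ information of $j$; in fact one finds $(2j+1, 2p) = 1 \iff (j, p) = 1$ (the factor $2$ is automatic since $2j+1$ is odd), but one should double-check there is no off-by-one issue at the endpoints $k=1$ and $k=2p$ and that the $j$'s obtained are distinct modulo $2p$. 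Once this combinatorial dictionary is in place, the linear-algebra reduction is immediate and the proof closes.
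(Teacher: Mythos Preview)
Your approach has a genuine gap in the bijection step, which you correctly flagged as the delicate point but then resolved incorrectly.

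First, a minor miscount: the set $\{1\le k\le 2p:(k,2p)=1\}$ has $\varphi(2p)=p-1$ elements, not $\varphi(4p)=2(p-1)$; you are ranging over a half-period of length $2p$, not the full period $4p$. So linear independence is indeed the goal, but these elements cannot form a basis of $\Q(\zeta_{4p})$.

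The real problem is the claimed bijection. Writing $k=2j+1$, as $k$ runs over your index set the exponent $j$ runs over $\{0,1,\dots,p-1\}\setminus\{(p-1)/2\}$. Your asserted equivalence $(2j+1,2p)=1\iff (j,p)=1$ is false: take $j=0$, where $2j+1=1$ is coprime to $2p$ but $(0,p)=p$. More to the point, $\zeta_{2p}^j$ is a primitive $2p$-th root of unity iff $(j,2p)=1$, i.e.\ $j$ is odd and coprime to $p$; but your list of $j$'s includes $j=0$ (giving $\zeta_{2p}^0=1$) and all even $j<p$, so the resulting $\zeta_{2p}^j$ are certainly not the primitive $2p$-th roots. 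Hence Lemma~\ref{lem:cyclotomic_basis_square_free} does not apply to them.

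The idea can be rescued, but by a different observation: the $p$ elements $\zeta_{2p}^0,\zeta_{2p}^1,\dots,\zeta_{2p}^{p-1}$ lie in a $(p-1)$-dimensional space and satisfy exactly one linear relation over $\Q$, namely $\Phi_{2p}(\zeta_{2p})=\sum_{j=0}^{p-1}(-1)^j\zeta_{2p}^j=0$, whose coefficients are all nonzero; deleting any single term therefore leaves a linearly independent set. This is essentially the paper's argument, carried out directly in $\Q(\zeta_{4p})$: start from the power basis $\{\zeta_{4p}^k:1\le k\le 2p-2\}$, use the relation $\zeta_{4p}\Phi_{4p}(\zeta_{4p})=\sum_{j=0}^{p-1}(-1)^j\zeta_{4p}^{2j+1}=0$ to trade $\zeta_{4p}^{p}$ for $\zeta_{4p}^{2p-1}$, obtaining the basis $\{\zeta_{4p}^k:1\le k\le 2p-1,\ k\ne p\}$, which contains the set in question as a subset.
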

\begin{proof}
We have $\phi(4p)=2(p-1)$ and the set $S=\{\zeta_{4p}^k\mid  1\leq k\leq 2p-2\}$ is a $\Q$-basis of $\Q(\zeta_{4p})$. Since $\Phi_{4p}(x)=\sum_{j=0}^{p-1}(-1)^jx^{2j}$, one has
\[
\sum_{j=0}^{p-1}(-1)^j\zeta_{4p}^{2j+1}=\zeta_{4p}\Phi_{4p}(\zeta_{4p})=0.
\]
Hence $\zeta_{4p}^{p}$ is a linear combination of $\{\zeta_{4p}^{2j+1}\mid j=0,\ldots,p-1 \text{ and } 2j+1\not=p\}$. So if we replace $\zeta_{4p}^{p}$ in the set $S$ by $\zeta_{4p}^{2p-1}$, we still have a $\Q$-basis of $\Q(\zeta_{4p})$.  In other words,  $\{\zeta_{4p}^k\mid  1\leq k\leq 2p-1, k\not= p\}$ is a $\Q$-basis of $\Q(\zeta_{4p})$. In particular, the set $\{\zeta_{4p}^k\mid 1\leq k\leq 2p,(k,2p)=1, 1\leq j\leq p, (j,p)=1\}$ is  $\Q$-linearly independent. 
\end{proof}

\begin{lem} \label{lem:2.7}
Let $m$ be an odd squarefree positive integer. Then the set $\{\zeta_{4m}^k\mid  1\leq k\leq 2m, (k,2m)=1\}$ is $\Q$-linearly independent.
\end{lem}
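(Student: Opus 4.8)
The plan is to exploit the coprimality $\gcd(4,m)=1$ in order to split $\zeta_{4m}$ and reduce the statement to the squarefree case already recorded in Lemma \ref{lem:cyclotomic_basis_square_free}. First I would fix a factorization $\zeta_{4m}=\zeta_4\zeta_m$ of our chosen primitive $4m$-th root of unity into a primitive $4$-th root $\zeta_4$ and a primitive $m$-th root $\zeta_m$; this is possible since $\gcd(4,m)=1$, by writing $\zeta_4=\zeta_{4m}^{4b}$ and $\zeta_m=\zeta_{4m}^{ma}$ for integers $a,b$ with $ma+4b\equiv 1\pmod{4m}$. I would also record the elementary observation that for an integer $k$ one has $\gcd(k,2m)=1$ if and only if $k$ is odd and $\gcd(k,m)=1$; in particular the index set in the statement has exactly $\varphi(2m)=\varphi(m)$ elements.

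Next I would set up the ambient basis. Because $\gcd(4,m)=1$, the cyclotomic fields $\Q(\zeta_4)$ and $\Q(\zeta_m)$ are linearly disjoint over $\Q$, so $[\Q(\zeta_{4m}):\Q]=2\varphi(m)$ and
\[
\mathcal{B}=\{\,\zeta_4^{a}\zeta_m^{b}\ :\ a\in\{0,1\},\ 1\le b\le m,\ \gcd(b,m)=1\,\}
\]
is a $\Q$-basis of $\Q(\zeta_{4m})$. Here I use Lemma \ref{lem:cyclotomic_basis_square_free}, applied to the squarefree integer $m$, together with the dimension count $\varphi(m)=[\Q(\zeta_m):\Q]$, to see that $\{\zeta_m^{b}\mid 1\le b\le m,\ \gcd(b,m)=1\}$ is itself a $\Q$-basis of $\Q(\zeta_m)$, and then take the product with the obvious basis $\{1,\zeta_4\}$ of $\Q(\zeta_4)/\Q$.

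Now for each $k$ with $1\le k\le 2m$ and $\gcd(k,2m)=1$ I would write $\zeta_{4m}^{k}=\zeta_4^{k}\zeta_m^{k}$. Since $k$ is odd, $\zeta_4^{k}\in\{\zeta_4,\zeta_4^{3}\}=\{\pm\zeta_4\}$, so $\zeta_{4m}^{k}=\pm\,\zeta_4\,\zeta_m^{k}$ with $\zeta_m^{k}$ a primitive $m$-th root of unity. The key point is that $k\mapsto (k\bmod m)$ is a bijection from $\{k\mid 1\le k\le 2m,\ \gcd(k,2m)=1\}$ onto $\{b\mid 1\le b\le m,\ \gcd(b,m)=1\}$: it is injective because $k\equiv k'\pmod m$ with $k,k'$ both odd forces $2m\mid(k-k')$, while $|k-k'|\le 2m-1$, hence $k=k'$; and both sets have $\varphi(m)$ elements. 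Consequently the set in the statement equals $\{\,\varepsilon_b\,\zeta_4\,\zeta_m^{b}\mid 1\le b\le m,\ \gcd(b,m)=1\,\}$ for suitable signs $\varepsilon_b\in\{\pm1\}$, which is obtained from the subset $\{\zeta_4\zeta_m^{b}\}\subset\mathcal{B}$ by rescaling each vector by $\pm1$; in particular it is $\Q$-linearly independent.

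The only genuinely delicate points are bookkeeping: checking that a primitive $4m$-th root really factors as $\zeta_4\zeta_m$ and that $\Q(\zeta_4)$ and $\Q(\zeta_m)$ are linearly disjoint (so that $\mathcal{B}$ is honestly a basis), and verifying the injectivity of $k\mapsto k\bmod m$ on the relevant range. Once these are in place the statement is immediate; the one conceptual subtlety worth flagging is that one must take only ``half'' of the primitive $4m$-th roots of unity here, since the two primitive $4$-th roots $\{i,-i\}$ are themselves $\Q$-linearly dependent --- which is precisely why the case of modulus $4m$ is not covered directly by Lemma \ref{lem:cyclotomic_basis_square_free}.
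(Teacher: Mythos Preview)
Your argument is correct, apart from a harmless labeling slip: in the B\'ezout factorization you want $\zeta_4=\zeta_{4m}^{ma}$ and $\zeta_m=\zeta_{4m}^{4b}$, not the other way around (the exponent $ma$ kills the $m$-part of the order and leaves a primitive $4$-th root, while $4b$ kills the $4$-part). The rest of the proof is unaffected.

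Your route is genuinely different from the paper's. The paper proceeds by induction on the number of prime factors of $m$: the base case $m=p$ is the preceding lemma (proved there by an ad hoc manipulation of the power basis of $\Q(\zeta_{4p})$), and the inductive step peels off one prime $p$ at a time, using the linear disjointness of $\Q(\zeta_{4m})$ and $\Q(\zeta_p)$ together with the basis $\{\zeta_p^{j}\mid 1\le j\le p,\ (j,p)=1\}$ of $\Q(\zeta_p)$, and then a CRT bookkeeping argument to match exponents modulo $4mp$. You instead split once and for all as $\Q(\zeta_{4m})\simeq \Q(\zeta_4)\otimes_{\Q}\Q(\zeta_m)$, invoke Lemma~\ref{lem:cyclotomic_basis_square_free} for the squarefree modulus $m$ to get a basis of $\Q(\zeta_m)$, and then identify the set in question, up to signs, with the ``$\zeta_4$-row'' $\{\zeta_4\zeta_m^{b}\}$ of the product basis via the bijection $k\mapsto k\bmod m$. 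Your approach is shorter and avoids both the induction and the separate treatment of the prime case; the paper's approach has the minor structural advantage that it builds directly on the immediately preceding lemma rather than going back to the general squarefree result.
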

\begin{proof}
We proceed by induction on the number of prime factors of $m$. If $m$ is prime, we are done by the previous lemma. Suppose that the statement is true for an odd squarefree integer $m$. We show that the statement is true for $mp$, where $p$ is an odd prime number coprime to $m$. We know that $ \{\zeta_{4m}^k\mid (k,2m)=1, 1\leq k\leq 2m\}$ is $\Q$-linearly independent, $\{\zeta^j_p\mid  1\leq j\leq p, (j,p)=1\}$ is a $\Q$-basis of $\Q(\zeta_p)$. Since $\Q(\zeta_{4m})\cap \Q(\zeta_p)=\Q$, the set  \[S=\{\zeta_{4m}^k\zeta_p^j\mid  1\leq k\leq 2m,(k,2m)=1,1\leq j\leq p, (j,p)=1\}\] is $\Q$-linearly independent.
Inside $\Q(\zeta_{4mp})$ we can use $\zeta_{4m}=\zeta_{4mp}^p$ and $\zeta_{p}=\zeta_{4mp}^{4m}$. Then $\zeta_{4m}^k\zeta_p^j=\zeta_{4mp}^{kp+4mj}$. By taking modulo $4m$ and $p$, we see that  $kp+4mj$ is coprime to $4mp$. 
Let $l$ be the residue of $kp+4mj$ modulo $4mp$.  Then $l$ is coprime to $2mp$ and $\zeta_{4mp}^{kp+4mj}=\zeta_{4p}^l$. If $l>2mp$ then  $l'=l-2mp$ is coprime to $2mp$, $1\leq l'\leq 2mp$ and  $\zeta_{4mp}^{kp+4mj}=-\zeta_{4p}^{l'}$.

Note that if $kp+4mj\equiv k'p+4mj'\pmod {4mp}$, then $k\equiv k'\pmod {2m}$ and $j\equiv j\pmod p$. Hence the cardinality of $S$ is $\phi(2m)\phi(p)$, which is equality to the set 
$\{\zeta_{4mp}^l\mid  1\leq l\leq 2mp, (k,2mp)=1\}$. This implies that the latter set is $\Q$-linearly independent.
\end{proof}

\section{Generalized Fekete polynomials} 
\label{sec:fekete}
In this section, we introduce the generalized Fekete polynomials (see also \cite[Page 231]{[Lemmermeyer1]} and \cite[Page 24]{[BM]}.)
\begin{definition}
Let $\chi: (\Z/D)^{\times} \to \mathbb{C}^{\times}$ be a primitive Dirichlet character of conductor $D$. The generalized Fekete polynomial associated with $\chi$ is given by 
\[ F_{\chi}(x) =\sum_{a=1}^{D-1} \chi(a)x^a .\] 
\end{definition}
In this article, we will focus on the case of real primitive Dirichlet characters. 
\begin{definition} \label{def:fekete}
Let $d$ be a squarefree integer. Let $\Delta$ be the fundamental discriminant of the quadratic extension $\Q(\sqrt{d})/\Q$ and $\chi_{\Delta}$ the associated quadratic character. The generalized Fekete polynomial associated with $\Q(\sqrt{d})/\Q$ is given by 
\[ F_{\Delta}(x)= F_{\chi_{\Delta}}(x)= \sum_{a=1}^{D-1} \chi_{\Delta}(a) x^a =\sum_{a=1}^{D-1}\left(\frac{\Delta}{a} \right) x^a .\] 
\end{definition}
Note that in the above definition, we index $F$ by $\Delta$ instead of $D$ because $\Q(\sqrt{d})/\Q$ is determined by $\Delta$ and not necessarily by $D$. For example, $\Q(\sqrt{2})$ and $\Q(\sqrt{-2})$ both have $D=8.$ More generally, if $2m$ is a squarefree number, then  $\Q(\sqrt{2m})$ and $\Q(\sqrt{-2m})$ both have $D=|\Delta|=8m.$

We recall that the $L$-function associated with a primitive Dirichlet character $\chi$ with conductor $D$ is defined as 
\[ L(\chi,s)= \sum_{n=1}^{\infty} \frac{\chi(n)}{n^s}.\] 
There is a direct relationship between the generalized Fekete polynomial $F_{\chi}(x)$ and values of the $L$-function associated with $\chi$. More precisely 
\begin{prop}
\[ \Gamma(s) L(\chi, s)=  \int_{0}^1 \frac{(-\log(t))^{s-1} }{t} \frac{F_{\chi}(t)}{1-t^D} dt, \]
where $\Gamma(s)$ is the Gamma function. 
\end{prop}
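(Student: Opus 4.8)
The plan is to relate the series defining $L(\chi,s)$ to an integral via the classical Mellin transform of the exponential, and then to repackage the sum over residue classes modulo $D$ into the polynomial $F_\chi$. First I would recall the standard identity $\Gamma(s)n^{-s} = \int_0^\infty u^{s-1}e^{-nu}\,du$ for $\mathrm{Re}(s)>0$ and $n\geq 1$. Summing this against $\chi(n)$ and interchanging sum and integral (justified by absolute convergence, since $\sum_n |\chi(n)| e^{-nu} = \sum_n e^{-nu}$ converges and $\int_0^\infty u^{\sigma-1}\sum_n e^{-nu}\,du < \infty$ for $\sigma > 1$) gives
\[
\Gamma(s)L(\chi,s) = \int_0^\infty u^{s-1} \sum_{n=1}^\infty \chi(n) e^{-nu}\,du.
\]
Then I would evaluate the inner sum in closed form: writing $n = a + kD$ with $1\le a\le D-1$ (the term $a=0$, i.e. $D\mid n$, contributes nothing since $\chi(n)=0$ there) and $k\ge 0$, periodicity of $\chi$ gives $\sum_{n\ge 1}\chi(n)e^{-nu} = \sum_{a=1}^{D-1}\chi(a)e^{-au}\sum_{k\ge 0}e^{-kDu} = \dfrac{\sum_{a=1}^{D-1}\chi(a)e^{-au}}{1-e^{-Du}} = \dfrac{F_\chi(e^{-u})}{1-(e^{-u})^D}$.

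Next I would substitute $t = e^{-u}$, so $u = -\log t$, $du = -dt/t$, and the interval $u\in(0,\infty)$ maps to $t\in(0,1)$, reversing orientation to cancel the sign. This turns the integral into
\[
\Gamma(s)L(\chi,s) = \int_0^1 (-\log t)^{s-1}\,\frac{F_\chi(t)}{1-t^D}\,\frac{dt}{t},
\]
which is exactly the claimed formula. I would note that the integrand is integrable near $t=1$ because $F_\chi(1) = \sum_{a=1}^{D-1}\chi(a) = 0$ (a nontrivial character sums to zero over a period), so $F_\chi(t)/(1-t^D)$ extends continuously there, while near $t=0$ the factor $(-\log t)^{s-1}/t$ against $F_\chi(t) = O(t)$ is integrable for $\mathrm{Re}(s)>0$; and near $t = 1$ one also uses $\mathrm{Re}(s) > 1$ for the original interchange, with the final identity then extended by analytic continuation if desired.

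The main obstacle is purely the justification of interchanging the infinite sum with the integral and the convergence bookkeeping at the endpoints $t=0$ and $t=1$; the algebraic manipulations (geometric series in $k$, change of variables) are routine. A clean way to handle this is to first establish everything for $\mathrm{Re}(s)>1$, where Fubini applies with the crude bound $|\chi(n)|\le 1$, and then observe that both sides are holomorphic in $s$ on $\mathrm{Re}(s)>0$ — the left side because $L(\chi,s)$ is entire for nontrivial $\chi$ and $\Gamma$ is meromorphic with known poles, the right side because the vanishing $F_\chi(1)=0$ kills the potential singularity at $t=1$ uniformly — so the identity propagates to $\mathrm{Re}(s)>0$ by the identity theorem.
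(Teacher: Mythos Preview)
Your proof is correct and takes a genuinely more direct route than the paper's. The paper proceeds via the Hurwitz zeta function: it invokes the decomposition $L(s,\chi)=D^{-s}\sum_{a=1}^{D-1}\chi(a)\zeta(s,a/D)$ and the integral representation $\Gamma(s)\zeta(s,m)=\int_0^\infty x^{s-1}e^{-mx}/(1-e^{-x})\,dx$ from Apostol, then substitutes $t=e^{-x/D}$ and sums over $a$. You instead apply the Mellin identity $\Gamma(s)n^{-s}=\int_0^\infty u^{s-1}e^{-nu}\,du$ term by term, sum the resulting geometric series over residue classes to obtain $F_\chi(e^{-u})/(1-e^{-Du})$ in one step, and then substitute $t=e^{-u}$. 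Your path is more elementary and self-contained (no Hurwitz zeta needed), and your added remarks on the integrability at $t=0,1$ using $F_\chi(1)=0$ and on analytic continuation to $\mathrm{Re}(s)>0$ go beyond what the paper records. The paper's route has the minor advantage of citing standard references, but both arguments are equivalent in substance.
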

\begin{proof}
For $0 <m \leq 1$, we recall that the Hurwitz zeta function $\zeta(s, m)$ is defined by 
\[ \zeta(s,m)=\sum_{n=0}^{\infty} \frac{1}{(n+m)^s} .\]
By \cite[Section 12.1]{[Apostol]}, we have 
\begin{equation}  \label{eq:decomposition}
L(s, \chi) = \frac{1}{D^s} \sum_{a=1}^{D-1} \chi(a) \zeta(s, \frac{a}{D}). 
\end{equation} 
Furthermore, by \cite[Theorem 12.2]{[Apostol]}, the Hurwitz zeta function $\zeta(s,m)$ has the following integral representation 
\[ \Gamma(s) \zeta(s,m) = \int_{0}^{\infty} \frac{x^{s-1} e^{-mx}}{1-e^{-x}} dx .\] 
Apply this formula for $m=\frac{a}{D}$ we have 
\[ \Gamma(s) \zeta(s,\frac{a}{D}) = \int_{0}^{\infty} \frac{x^{s-1} e^{-\frac{a}{D}x}}{1-e^{-x}} dx .\] 
Let $t=e^{-\frac{1}{D}x}.$ Then $x=-D \log(t)$ and $dx= \frac{-D}{t} dt.$ We then have 
\begin{equation} \label{eq:Hurwitz}
\Gamma(s) \zeta(s,\frac{a}{D}) = \int_{0}^{1} \frac{D^s (-\log(t))^{s-1} t^a}{t(1-t^D)} dt. 
\end{equation} 
Combining Equation \ref{eq:decomposition} and Equation \ref{eq:Hurwitz} we have 
\[ \Gamma(s) L(s, \chi) = \sum_{a=1}^{D-1} \chi(a) \int_{0}^{1} \frac{D^s (-\log(t))^{s-1} t^a}{t(1-t^D)} dt = \int_{0}^1 \frac{(-\log(t))^{s-1} }{t} \frac{F_{\chi}(t)}{1-t^D} dt
\qedhere.\] 

\end{proof}
\begin{rmk}
For some further discussions regarding real zeroes of $F_{\Delta}(x)$ over the interval $(0,1)$ and complex zeroes of $F_{\Delta}(x)$ on the unit circle, we refer the readers to \cite{[BM],[Conrey]}. 
\end{rmk}

Our numerical data suggests that some care must be taken when $\Delta$ is even. More precisely, when $\Delta$ is even, $\chi_{\Delta}(a)=0$ if $a$ is even. Consequently, $F_{\Delta}(x)/x$ is a polynomial in $x^2.$ In order to define and study the Fekete polynomials $f_{\Delta}(x)$ and the reduced Fekete polynomial $g_{\Delta}(x)$ associated with $\Delta$ (see \cite{[MTT3]} in the case $D=|\Delta|$ is a prime number), we introduce the following modification. 

\begin{definition} \label{def:modified_Fekete}
Suppose that $\Delta$ is an even number. The modified Fekete polynomial $\tilde{F}_{\Delta}(x)$ associated with $\Delta$ is given by 
\[ F_{\Delta}(x)= x \tilde{F}_{\Delta}(x^2) .\]  
Concretely 
\[ \tilde{F}_{\Delta}(x)= \sum_{a=0}^{D/2-1} \left(\frac{\Delta}{2a+1} \right) x^a .\] 
\end{definition}

\begin{rmk}
We motivate this definition by a concrete example; namely when $\Delta = 4 \times 11$. In this case, $F_{\Delta}(x)$ has the following factorization 
\begin{align*} F_{\Delta}(x)&= x(x - 1)^2 (x + 1)^2 \Phi_4(x) \Phi_{11}(x) \Phi_{22}(x) (x^{16} - x^{14} + 2x^{12} + 3x^8 + 2x^4 - x^2 + 1) \\
&= x \Phi_1(x^2)^2 \Phi_2(x^2) \Phi_{11}(x^2) f_\Delta(x^2), 
\end{align*} 
where $f_\Delta(x) = x^{8} - x^{7} + 2x^{6} + 3x^4 + 2x^2 - x + 1.$
By definition, we have 
\[ \tilde{F}_{\Delta}(x) = \Phi_1(x)^2 \Phi_2(x) \Phi_{11}(x) f_\Delta(x) .\] 
Philosophically speaking, we want to define the Fekete polynomial $f_{\Delta}(x)$ in such a way that $f_{\Delta}(x)$ is as simple as possible (and that $f_{\Delta}(x)$ contains all essential information about $F_{\Delta}(x)$ and hence $\tilde{F}_{\Delta}(x)$.)  This shows that, we should define 
\[ f_{\Delta}(x)= x^{8} - x^{7} + 2x^{6} + 3x^4 + 2x^2 - x + 1 .\]
Note that $f_{\Delta}(x)$ is a reciprocal polynomial and its trace polynomial is given by (see \cite[Section 3]{[MTT3]} for the definition) 
\[ g_{\Delta}(x)= x^4 - x^3 - 2x^2 + 3x + 1.\] 
The polynomials $f_{\Delta}(x)$ and $g_{\Delta}(x)$ will be the main subjects of investigation in our work. 
\end{rmk}

Finally, we also observe that $F_{\Delta}(x)$ satisfies the following symmetry. 

\begin{prop} \label{prop:reciprocal1} 
Let $\chi =\chi_{\Delta}$ be a quadratic Dirichlet character with conductor $D$. Then we have  
\[ x^{D} F_{\Delta} \left(\frac{1}{x} \right) = \chi(-1) F_{\Delta}(x) .\] 
If $D$ is even then 
\[ x^{\frac{D}{2}-1} \mFe \left(\frac{1}{x} \right) =  \chi(-1) \mFe(x) .\] 
\end{prop}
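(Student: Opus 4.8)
The plan is to prove both identities directly from the definition of $F_\Delta(x)$ by reindexing the defining sum via $a \mapsto D - a$, and then to deduce the statement for $\mFe$ by a change of variables $x \mapsto x^2$ together with Definition \ref{def:modified_Fekete}. First I would write
\[
x^D F_\Delta\left(\frac{1}{x}\right) = x^D \sum_{a=1}^{D-1} \chi(a) x^{-a} = \sum_{a=1}^{D-1} \chi(a) x^{D-a},
\]
and substitute $b = D - a$, so that $b$ also ranges over $\{1, \ldots, D-1\}$ and $x^{D-a} = x^b$. The key arithmetic input is that $\chi(D - a) = \chi(-a) = \chi(-1)\chi(a)$, since $\chi$ is a Dirichlet character of conductor $D$ (so $\chi(D-a) = \chi(-a)$) and $\chi$ is multiplicative. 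This yields $\chi(a) = \chi(-1)\chi(b)$, hence the sum becomes $\chi(-1) \sum_{b=1}^{D-1} \chi(b) x^b = \chi(-1) F_\Delta(x)$, which is the first identity.

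For the second identity, assume $D$ is even, so $\Delta$ is even and Definition \ref{def:modified_Fekete} gives $F_\Delta(x) = x\,\mFe(x^2)$. Substituting $1/x$ for $x$ in the first identity and using $F_\Delta(1/x) = (1/x)\mFe(1/x^2)$ together with $F_\Delta(x) = x\mFe(x^2)$, I would compute
\[
x^D \cdot \frac{1}{x}\,\mFe\!\left(\frac{1}{x^2}\right) = \chi(-1)\, x\, \mFe(x^2),
\]
that is, $x^{D-2}\mFe(1/x^2) = \chi(-1)\mFe(x^2)$. Writing $y = x^2$, this reads $y^{D/2 - 1}\mFe(1/y) = \chi(-1)\mFe(y)$, which is exactly the claimed symmetry (the identity holds as polynomials in $y$ since it holds for all $y$ that are squares, hence on an infinite set). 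One should note for bookkeeping that $\deg \mFe = D/2 - 1$ precisely when $\chi(D-1) = \pm 1 \neq 0$, i.e. when $D - 1$ is coprime to $D$, which is automatic; this confirms the exponent $D/2 - 1$ is the right normalization.

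I do not expect a serious obstacle here — the argument is a routine reindexing. The only point requiring a little care is the justification that $\chi(D-a) = \chi(-a)$ and that $-a$ is interpreted correctly through the Kronecker symbol: since $\chi_\Delta(n) = \left(\frac{\Delta}{n}\right)$ and $\chi_\Delta$ is genuinely a character modulo $D = |\Delta|$ by the theorem quoted from \cite[Theorem 9.13]{[MV]}, reduction mod $D$ is legitimate, so $\chi(D-a) = \chi(-a) = \chi(-1)\chi(a)$ holds without sign subtleties. A secondary point is the passage from an identity valid on all squares to an identity of polynomials in the second part, but this is immediate since two polynomials agreeing on infinitely many values are equal.
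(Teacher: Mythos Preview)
Your proof is correct and follows essentially the same approach as the paper: the first identity is obtained by the reindexing $a\mapsto D-a$ together with $\chi(D-a)=\chi(-1)\chi(a)$, and the second identity is deduced from the first via the defining relation $F_\Delta(x)=x\,\mFe(x^2)$. The paper's proof is slightly terser (it just says the second equality ``follows directly'' from the first and the definition of $\mFe$), but the content is the same.
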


\begin{proof}
We have 
\begin{align*}
x^{D} F_{\chi} \left(\frac{1}{x} \right) &= x^D \sum_{a=1}^D \chi(a) \left(\frac{1}{x} \right)^a = \sum_{a=1}^{D-1} \chi(a) x^{D-a} \\ 
& = \sum_{u=1}^{D-1} \chi(D-u) x^u \quad  = \chi(-1) \sum_{u=1}^{D-1} \chi(u)x^u = \chi(-1) F_{\chi}(x).
\end{align*}
When $D$ is even, the second equality follows directly from the above equation and the definition $\mFe(x).$
\end{proof}

\subsection{Gauss sums and zeros of generalized Fekete polynomials} Let $\chi_{\Delta}$ be a quadratic Dirichlet character of conductor $D$ and $b$ an integer. Let $\zeta_{D}=\exp \left(\frac{2 \pi i}{D} \right)$ be a primitive $D$-root of unity. 
\begin{definition} (\cite[Chapter V]{[Ayoub]}) \label{def:gauss_sums}
The Gauss sum $G(b, \chi_{\Delta})$ is defined as follow 
\[ G(b, \chi_{\Delta})=\sum_{a=1}^{D-1} \chi_{\Delta}(a) \zeta_{D}^{ab}=F_{\Delta}(\zeta_{D}^b) .\] 
\end{definition}
We have the following fundamental property \cite[Theorem 4.12, page 312]{[Ayoub]}
\[
G(b,\chi_{\Delta})=\chi_{\Delta}(b) G(1,\chi_{\Delta}).
\]
A direct consequence of this property is that if $\gcd(b,D)>1$ then $F_{\Delta}(\zeta_{D}^b)=0$ (in fact, in the proof of \cite[Theorem 4.12, page 312]{[Ayoub]}, it was shown that if $\gcd(b,D)>1$ then  $G(b,\chi_{\Delta})=0$.) In other words, if $n|D$ and $n \neq D$ then $F_{\Delta}(\zeta_n)=0.$ On the other hand, we have \cite[Theorem 4.13, Page 313]{[Ayoub]}
\[ |F_{\Delta}(\zeta_{D})|^2= |G(1, \chi_{\Delta})|^2=D  .\] 
This shows that $\zeta_D$ is not a root of $F_{\Delta}(x).$ In light of these properties and the results  in \cite{[MTT3]}, the following question seems natural. 
\begin{question} \label{question:main}
Let $n$ be a positive integer. What is the multiplicity of $\zeta_{n}$ as a root of $F_{\Delta}(x)$? Equivalently, we can rephrase this question as a question about the multiplicity of $\Phi_n(x)$ in $F_{\Delta}(x)$. 
\end{question}
We remark that in the above question, we do not require $n$ to be a divisor of $D$. For simplicity, we will write $r_{\Delta}(\Phi_n)=r_{\Delta}(n)$ (respectively $\tilde{r}_{\Delta}(\Phi_n)=\tilde{r}_{\Delta}(n))$ for the multiplicity of $\Phi_n(x)$ in $F_{\Delta}(x)$ (respectively $\tilde{F}_{\Delta}(x)$.)  Quite surprisingly, the multiplicity is not always $1$. Furthermore, sometimes there are other exceptional zeros. For example, if $\Delta=3p$ with $p \equiv 2 \pmod{3}$ and $p \equiv 3 \pmod{4}$, then $\zeta_6$ is a simple root and $\zeta_3$ is a double root of $F_{\Delta}(x)$ (see Proposition \ref{prop:3p:zeta_3} for the precise statements and their proofs.)  

We also note that when $\Delta$ is even and $n$ is odd, $\Phi_n(x)$ and $\Phi_{2n}(x)=\Phi_n(-x)$ come in pair. This is a consequence of the following general fact. 
\begin{lem} \label{lem:pair}
Let $F \in \Z[x]$ be a polynomial in $x^2$; namely $F(x)= \tilde{F}(x^2)$ with $F(x) \in \Z[x]$. Suppose that $p(x)$ is an irreducible factor of $F(x)$ with multiplicity $m$. 
\begin{enumerate} 
\item If $p(x) \neq p(-x)$, then $p(-x)$ is also a factor of $F(x)$ with the same multiplicity. 
\item Assume that $p(x)=p(-x)$ and  $p(x)=\tilde{p}(x^2)$. Then the multiplicity of $p(x)$ in $F(x)$ is equal to the multiplicity of $\tilde{p}(x)$ in $\tilde{F}(x).$
\end{enumerate} 
\end{lem}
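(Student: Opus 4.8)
The plan is to work over $\C$ and compare multiplicities of roots on both sides of the substitution $x \mapsto x^2$. Recall that the multiplicity of an irreducible $p(x)$ in $F(x) \in \C[x]$ equals the multiplicity of any fixed root $\alpha$ of $p$ as a root of $F$, i.e. the largest $m$ with $(x-\alpha)^m \mid F(x)$; and over $\Q$ this multiplicity is the same for all Galois conjugates of $\alpha$, so it makes sense to speak of the multiplicity of the irreducible factor $p$. So I would phrase everything in terms of root multiplicities, which transfer cleanly through the relation $F(x) = \tilde F(x^2)$.

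For part (1): suppose $p(x)$ is irreducible with $p(x) \ne p(-x)$, of multiplicity $m$ in $F$. First I would observe that $p(-x)$ is also irreducible (it is $\pm$ a monic irreducible after normalizing the leading coefficient, since $q(x) \mapsto q(-x)$ is a ring automorphism of $\C[x]$ preserving degrees and irreducibility). Since $F(x) = \tilde F(x^2)$ is invariant under $x \mapsto -x$, applying the automorphism $x \mapsto -x$ to the factorization $F(x) = p(x)^m \cdot h(x)$ with $p \nmid h$ gives $F(x) = F(-x) = p(-x)^m \cdot h(-x)$ with $p(-x) \nmid h(-x)$; by uniqueness of factorization, $p(-x)$ occurs in $F$ with multiplicity exactly $m$. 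The hypothesis $p(x) \ne p(-x)$ ensures $p(x)$ and $p(-x)$ are distinct (non-associate) irreducibles, so these two contributions are genuinely separate factors; that is the only point where that hypothesis is used.

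For part (2): assume $p(x) = p(-x)$ and write $p(x) = \tilde p(x^2)$ (such $\tilde p$ exists precisely because $p$ is even; one should check $\tilde p$ is a polynomial, which is immediate by collecting even-degree terms). Let $\alpha$ be a root of $\tilde p$, so $\beta$ with $\beta^2 = \alpha$ is a root of $p$, and $-\beta$ is the other square root. Since $F(x) = \tilde F(x^2)$, the map $x \mapsto x^2$ sends roots of $\tilde F$ to... more precisely, the multiplicity of $\alpha$ as a root of $\tilde F$ equals the multiplicity of $\beta$ as a root of $F$. I would prove this by the substitution principle: writing $\tilde F(y) = (y - \alpha)^k \tilde g(y)$ with $\tilde g(\alpha) \ne 0$, we get $F(x) = (x^2 - \alpha)^k \tilde g(x^2) = (x-\beta)^k(x+\beta)^k \tilde g(x^2)$, and since $\beta \ne 0$ (because $\alpha \ne 0$: note $p(x) = \tilde p(x^2)$ forces $p(0) = \tilde p(0)$, and if $\tilde p(0) = 0$ then $x \mid p(x)$, contradicting $p$ irreducible unless $p(x) = x$, i.e. $\tilde p(x) = x$, the trivial case one handles separately or notes $\alpha=0$ can't be a root of an irreducible $\tilde p$ of degree $\ge 1$ other than $\tilde p = x$) the factors $(x-\beta)$, $(x+\beta)$ and $\tilde g(x^2)$ are pairwise coprime at $\beta$, so the multiplicity of $\beta$ in $F$ is exactly $k$. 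Thus the multiplicity of $\tilde p$ in $\tilde F$ (= multiplicity of $\alpha$ as a root of $\tilde F$) equals the multiplicity of $p$ in $F$ (= multiplicity of $\beta$ as a root of $F$).

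The main obstacle, such as it is, is bookkeeping rather than depth: one must be careful that "multiplicity of the irreducible factor" is well-defined (invariance under Galois conjugation), that applying $x \mapsto -x$ or $x \mapsto x^2$ genuinely is a bijection on the relevant root sets preserving multiplicities, and — the one genuine edge case — the behavior at $0$, where $x^2 - 0$ has a double root so naive multiplicity-counting would be off by a factor of $2$; this is precisely why in part (2) one wants $\alpha \ne 0$, which holds automatically for a nontrivial irreducible $\tilde p$. I expect no serious difficulty, but I would state the "substitution preserves multiplicity" fact as a small preliminary observation and then deduce both parts from it in a couple of lines each.
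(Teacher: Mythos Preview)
Your proof is correct. For part (1), your argument via the ring automorphism $x \mapsto -x$ and unique factorization is essentially the same as the paper's. For part (2), however, you take a genuinely different route: the paper stays entirely within $\Z[x]$, writing $F(x) = \tilde{p}(x^2)^m q(x)$ with $\gcd(q,p)=1$, observing that the quotient $q(x) = \tilde{F}(x^2)/\tilde{p}(x^2)^m$ must itself be a polynomial in $x^2$, say $\tilde{q}(x^2)$, and concluding $\tilde{F}(x) = \tilde{p}(x)^m \tilde{q}(x)$ with $\gcd(\tilde{p}, \tilde{q}) = 1$. Your approach instead passes to $\C$ and tracks root multiplicities under the substitution $y = x^2$. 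Both are elementary; the paper's argument has the advantage of sidestepping the edge case $\alpha = 0$ entirely (which you handle correctly, but it costs you a paragraph), while your approach makes the underlying root correspondence more transparent. One small point you leave implicit: your identification ``multiplicity of $\tilde{p}$ in $\tilde{F}$ equals multiplicity of $\alpha$ as a root of $\tilde{F}$'' presupposes that $\tilde{p}$ is irreducible over $\Q$; this follows immediately from the irreducibility of $p(x) = \tilde{p}(x^2)$, but is worth stating.
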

\begin{proof}
Because $p(x)$ is a factor of $F(x)$, we can write $F(x)= p(x) q(x).$  Take $x=-x$ and use the fact that $F(x)=F(-x)$, we have 
$F(x)=p(-x)q(-x).$ This shows that $p(-x)$ is a factor of $F(x).$ 

Let us first consider the case $p(x) \neq p(-x)$. In this case, $p(x)$ and $p(-x)$ are distinct factors of $F(x).$ Let us write 
$F(x) = p(x)^{m_1} p(-x)^{m_2} h(x),$ where $h(x) \in \Z[x]$ such that $\gcd(h(x), p(x)p(-x))=1.$ Take $x=-x$ again, we have 
$F(x) = F(-x) =p(-x)^{m_1} p(x)^{m_2} h(-x).$ 
Consequently $p(x)^{m_1} p(-x)^{m_2} h(x) = p(-x)^{m_1} p(x)^{m_2} h(-x).$  We conclude that $m_1=m_2.$

Let us now consider the second case, namely $p(x)= \tilde{p}(x^2).$ Let $m$ be the multiplicity of $p(x)$ in $F(x).$ We can write  $\tilde{F}(x^2)= F(x)=p(x)^m q(x) =\tilde{p}(x^2)^m q(x).$
Here $q(x) \in \Z[x]$ and $\gcd(q(x), p(x))=1$. From this equation, we see that $q(x)=\tilde{q}(x^2)$ for some $\tilde{q}(x) \in \Z[x]$; and hence $ \tilde{F}(x) = \tilde{p}(x)^m \tilde{q}(x)$ and $\gcd(\tilde{q}(x), \tilde{p}(x))=1.$ This shows that the multiplicity of $\tilde{p}(x)$ in $\tilde{F}(x)$ is $m$ as well. 
\end{proof}
By Proposition \ref{prop:cyclotomic}, we know that when $n$ is odd $\Phi_{2n}(x)=\Phi_n(-x)$ and $\Phi_n(x)\Phi_{2n}(x)=\Phi_n(x^2).$ Additionally, when $n=2m$ is even, we have  
$\Phi_{4m}(x)= \Phi_{2m}(x^2).$ As a corollary, we have the following
\begin{cor} \label{cor:pair}
Suppose that $\Delta$ is even and $n$ is odd. 
\begin{enumerate}
    \item If $\Phi_n(x)$ is a factor of $\tilde{F}_{\Delta}(x)$ then $\Phi_{2n}(x)$ is a factor of $F_{\Delta}(x)$. These factors have the same multiplicity.  
    \item The multiplicity of $\Phi_n(x)$ and $\Phi_{2n}(x)$ in $F_{\Delta}(x)$ is the same as the multiplicity of $\Phi_n(x)$ in $\tilde{F}_{\Delta}(x)$; namely 
    $ r_{\Delta}(\Phi_n)=r_{\Delta}(\Phi_{2n}) = \tilde{r}_{\Delta}(\Phi_n).$
\item   For any positive integer $m$,  
$ r_{\Delta}(\Phi_{4m})= \tilde{r}_{\Delta}(\Phi_{2m}).$ 
\end{enumerate}

\end{cor}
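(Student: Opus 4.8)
The plan is to derive Corollary~\ref{cor:pair} as a direct application of Lemma~\ref{lem:pair} to the polynomial $F_\Delta(x)$, together with the cyclotomic identities recalled from Proposition~\ref{prop:cyclotomic}. The starting observation is that since $\Delta$ is even, $\chi_\Delta(a)=0$ for even $a$, so $F_\Delta(x)/x$ is a polynomial in $x^2$; equivalently $F_\Delta(x)=x\tilde F_\Delta(x^2)$ by Definition~\ref{def:modified_Fekete}. Thus $F_\Delta(x)$ itself is not a polynomial in $x^2$, but $F_\Delta(x)/x$ is. To fit the hypothesis of Lemma~\ref{lem:pair} cleanly I would work with the factor structure of $\tilde F_\Delta(x^2)$, i.e. factor out the leading $x$ first: writing $F_\Delta(x)=x\,\tilde F_\Delta(x^2)$, any cyclotomic factor $\Phi_n(x)$ with $n$ odd (hence $n\ge 3$, as $\Phi_1$ could only contribute through $(x-1)$, but for $n$ odd $>1$ we are away from $x=0$) divides $F_\Delta(x)$ if and only if it divides $\tilde F_\Delta(x^2)$, with the same multiplicity, because $\gcd(\Phi_n(x),x)=1$.

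First I would prove part (1). Apply Lemma~\ref{lem:pair}(1) to $G(x):=\tilde F_\Delta(x^2)$, which is genuinely a polynomial in $x^2$ with $\tilde G = \tilde F_\Delta$. If $\Phi_n(x)$ is a factor of $\tilde F_\Delta(x)$ with multiplicity $m$, then $\Phi_n(x^2)$ is a factor of $G(x)=\tilde F_\Delta(x^2)$ with multiplicity $m$; by Proposition~\ref{prop:cyclotomic}(1) and (3), for $n$ odd $\Phi_n(x^2)=\Phi_n(x)\Phi_{2n}(x)$ with $\Phi_{2n}(x)=\Phi_n(-x)$ and $\Phi_n(x)\ne\Phi_n(-x)$ since $n>1$. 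Hence $\Phi_n(x)$ and $\Phi_{2n}(x)$ are distinct irreducible factors of $G(x)$ each of multiplicity $m$ (here I invoke Lemma~\ref{lem:pair}(1) in the form: if $p(x)\ne p(-x)$ and $p(x)p(-x)$ divides the polynomial-in-$x^2$, both appear with equal multiplicity — and their common multiplicity is exactly $m$ because $\Phi_n(x^2)=p(x)p(-x)$ appears to the $m$-th power). Since $\gcd(\Phi_n(x)\Phi_{2n}(x),x)=1$, the multiplicities in $F_\Delta(x)=x\,G(x)$ are the same, giving $r_\Delta(\Phi_n)=r_\Delta(\Phi_{2n})=m=\tilde r_\Delta(\Phi_n)$, which is exactly part (2) (and part (1) follows since $\Phi_{2n}$ dividing $F_\Delta$ with multiplicity $m$ is what is asserted, noting that $\Phi_{2n}(x)$ being a polynomial in $x$ does divide $F_\Delta$ but not $\tilde F_\Delta(x)$ in general).

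For the last displayed identity $r_\Delta(\Phi_{4m})=\tilde r_\Delta(\Phi_{2m})$, I would apply Lemma~\ref{lem:pair}(2): here $p(x)=\Phi_{4m}(x)$, which by Proposition~\ref{prop:cyclotomic} (part (2) applied to $2m$, since $2\mid 2m$) equals $\Phi_{2m}(x^2)$, so $p(x)=p(-x)$ and $\tilde p(x)=\Phi_{2m}(x)$. Lemma~\ref{lem:pair}(2) applied to $G(x)=\tilde F_\Delta(x^2)$ then says the multiplicity of $\Phi_{4m}(x)$ in $G(x)$ equals the multiplicity of $\Phi_{2m}(x)$ in $\tilde F_\Delta(x)$; again passing from $G(x)$ to $F_\Delta(x)=xG(x)$ costs nothing since $\gcd(\Phi_{4m}(x),x)=1$, yielding $r_\Delta(\Phi_{4m})=\tilde r_\Delta(\Phi_{2m})$.

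The only genuinely delicate point — and where I would be most careful — is the bookkeeping distinguishing multiplicities in $F_\Delta(x)$ from those in $\tilde F_\Delta(x)$: one must consistently track that $\Phi_n(x)$ and $\Phi_{2n}(x)$ both divide $F_\Delta(x)$ (a polynomial in $x$) but it is $\tilde F_\Delta(x)$, not $F_\Delta(x)$, whose factor $\Phi_n(x)$ "splits" into the pair, and the substitution $x\mapsto x^2$ is what converts $\tilde F_\Delta$'s factorization into $F_\Delta$'s. Everything else is a routine invocation of Lemma~\ref{lem:pair} and the cyclotomic identities; no new estimate or construction is needed.
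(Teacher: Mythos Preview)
Your proposal is correct and follows essentially the same approach as the paper: both apply Lemma~\ref{lem:pair} to $G(x)=F_\Delta(x)/x=\tilde F_\Delta(x^2)$ together with the cyclotomic identities $\Phi_n(x)\Phi_{2n}(x)=\Phi_n(x^2)$ (for $n$ odd) and $\Phi_{4m}(x)=\Phi_{2m}(x^2)$ from Proposition~\ref{prop:cyclotomic}. The only cosmetic difference is direction: the paper starts from the multiplicity of $\Phi_n$ in $F_\Delta(x)/x$ and deduces the multiplicity in $\tilde F_\Delta$, whereas you start from $\tilde F_\Delta$ and push forward via $x\mapsto x^2$; the content is identical.
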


\begin{proof}
Let $m$ be the multiplicity of $\Phi_n(x)$ in $F_{\Delta}(x)/x = \tilde{F}_{\Delta}(x^2).$ Then by Lemma \ref{lem:pair}, $m$ is also the multiplicity of $\Phi_{2n}(x)$ in $F_{\Delta}(x)$. In other words, we can write 
\[ \tilde{F}_{\Delta}(x^2)= F_{\Delta}(x)/x = \Phi_n(x)^{m} \Phi_{2n}(x)^{m} q(x) = \Phi_n(x^2)^{m} q(x) .\] 
Here $q(x) \in \Z[x]$ is a polynomial which is relatively prime to $\Phi_n(x)\Phi_{2n}(x)=\Phi_n(x^2).$ We also note that the quotient of two polynomials in $x^2$ is another polynomial in $x^2$. Therefore, we can write $q(x)=\tilde{q}(x^2)$ where $\tilde{q}(x) \in \Z[x]$ and $\gcd(\tilde{q}(x), \Phi_n(x))=1.$ We then conclude that $\tilde{F}_{\Delta}(x) = \Phi_n(x)^{m} q_1(x).$ From this equation, we see that the multiplicity of $\Phi_n(x)$ in $\tilde{F}_{\Delta}(x)$ is $m$ as well. 
\end{proof}

\section{The multiplicity $\Phi_1(x)$ and $\Phi_2(x)$ in $F_{\Delta}(x)$ and $\tilde{F}_{\Delta}(x)$.} 
\label{sec:multiplicy_Phi1_Phi2}
We first investigate Question \ref{question:main} in the simplest cases; namely the multiplicity of $x=1$ and $x=-1$. First, we have 
$ F_{\Delta}(1)=\sum\limits_{a=1}^{D-1} \chi_{\Delta}(a)=0.$
Therefore, $x=1$ is a root of $F_{\Delta}(x)$. As before, let $r_{\Delta}(\Phi_1)=r_{\Delta}(1)$ be the multiplicity of the root $x=1$. 

To study $r_{\Delta}(\Phi_1)$, we need the following lemma.  
 
\begin{lem}{}
\label{lem:mult}
\begin{enumerate} 
\item[(a)] $\sum\limits_{a=1}^{D-1}\chi_{\Delta}(a)a =\begin{cases} 0 &\text { if $\chi_{\Delta}$ is even }\\
 \not=0 &\text { if $\chi_{\Delta}$ is odd}.
\end{cases}$
\item[(b)] $\sum\limits_{a=1}^{D-1}\chi_{\Delta}(a)a^2\not=0$.
\end{enumerate}
\end{lem}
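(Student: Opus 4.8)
The plan is to prove both statements by relating the twisted power sums $\sum_{a=1}^{D-1}\chi_\Delta(a)a^k$ to special values of the Dirichlet $L$-function $L(\chi_\Delta,s)$ at non-positive integers, via generalized Bernoulli numbers, and then to invoke non-vanishing of $L(\chi_\Delta, s)$ (or of the relevant generalized Bernoulli number) at those points. Concretely, recall that the generalized Bernoulli numbers $B_{k,\chi}$ attached to a Dirichlet character $\chi$ of conductor $D$ are defined by
\[
\sum_{a=1}^{D}\chi(a)\frac{t e^{at}}{e^{Dt}-1}=\sum_{k=0}^{\infty}B_{k,\chi}\frac{t^k}{k!},
\]
and that $L(\chi,1-k)=-B_{k,\chi}/k$ for $k\geq 1$. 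Expanding the left-hand side and matching coefficients expresses $B_{k,\chi}$ as an explicit $\Q$-linear combination of the sums $S_j:=\sum_{a=1}^{D-1}\chi(a)a^j$ for $j\leq k$ (using $\chi(D)=0$). In particular $B_{1,\chi}=\tfrac1D S_1$, so $S_1=D\cdot B_{1,\chi}=-D\cdot L(\chi,0)$, and $B_{2,\chi}$ is a combination of $S_1$ and $S_2$; solving, $S_2$ is determined by $B_{1,\chi}$ and $B_{2,\chi}$, i.e. by $L(\chi,0)$ and $L(\chi,-1)$.

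For part (a): the standard functional-equation/parity dichotomy gives $B_{k,\chi}=0$ whenever $k\not\equiv \delta \pmod 2$ where $\delta\in\{0,1\}$ is defined by $\chi(-1)=(-1)^\delta$; this is elementary from the generating function by replacing $t\mapsto -t$ and $a\mapsto D-a$. Hence if $\chi_\Delta$ is even, $B_{1,\chi_\Delta}=0$, so $S_1=0$. If $\chi_\Delta$ is odd, then $L(\chi_\Delta,0)=-B_{1,\chi_\Delta}\neq 0$ because $L(\chi_\Delta,s)$ has no zero at $s=0$ for an odd primitive character — equivalently, $B_{1,\chi_\Delta}=\tfrac1D\sum_{a=1}^{D-1}\chi_\Delta(a)a\neq 0$, which also follows from the class number formula: for $\Delta<0$ this sum is (up to a nonzero explicit constant) $h(\Delta)w_\Delta^{-1}$, manifestly nonzero. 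So $S_1\neq 0$ in the odd case, proving (a).

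For part (b): note $\chi_\Delta(-1)=(-1)^\delta$ with $\delta$ as above. If $\chi_\Delta$ is even ($\delta=0$), then $B_{1,\chi_\Delta}=0$, and $S_2$ reduces to a nonzero multiple of $B_{2,\chi_\Delta}=-2L(\chi_\Delta,-1)$; one shows $L(\chi_\Delta,-1)\neq 0$ (a real primitive character $L$-function does not vanish at $s=-1$). If $\chi_\Delta$ is odd ($\delta=1$), then $B_{2,\chi_\Delta}=0$ by the parity dichotomy, but then $S_2$ is forced to equal a \emph{nonzero} multiple of $S_1$ (from the vanishing of the $k=2$ coefficient), and $S_1\neq 0$ by part (a); hence $S_2\neq 0$. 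Either way $S_2\neq 0$, proving (b).

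The main obstacle — and the step to treat most carefully — is the non-vanishing input: $L(\chi_\Delta,0)\neq 0$ for odd $\chi_\Delta$ and $L(\chi_\Delta,-1)\neq 0$ for even $\chi_\Delta$. The cleanest route avoids analytic non-vanishing theorems entirely and uses instead that these values are, up to nonzero rational factors and powers of $\pi$, equal to $L$-values on the other side of the functional equation ($L(\chi_\Delta,1)$ for the odd case, $L(\chi_\Delta,2)$ for the even case), which are nonzero since the defining Dirichlet series converges to a positive-leaning Euler product — more simply, $L(\chi_\Delta,s)\neq 0$ for $\mathrm{Re}(s)>1$ trivially, and $L(\chi_\Delta,1)\neq 0$ by Dirichlet. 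Alternatively, and perhaps most elementarily for the paper's purposes, one can cite the classical formulas for $B_{1,\chi}$ and $B_{2,\chi}$ directly: $B_{1,\chi}\neq 0$ for odd $\chi$ is the analytic class number formula, and $B_{2,\chi}\neq 0$ for even primitive $\chi$ is a known fact about generalized Bernoulli numbers (e.g. it can be deduced from $L(\chi,-1)\neq 0$ or checked via the nonvanishing of $L(\chi,2)$). I would present the argument with the generating-function bookkeeping done explicitly only up to $k=2$, and isolate the non-vanishing facts as cited lemmas so the exposition stays short.
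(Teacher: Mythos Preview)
Your proposal is correct and follows essentially the same route as the paper: both arguments identify $S_1=\sum_a\chi(a)a=D\,B_{1,\chi}$ and $S_2=\sum_a\chi(a)a^2=D\,B_{2,\chi}+D\,S_1$, then use the parity dichotomy $B_{k,\chi}=0$ for $k\not\equiv\delta\pmod 2$ together with the nonvanishing of $B_{1,\chi}$ (odd case) and $B_{2,\chi}$ (even case). The only cosmetic difference is that the paper extracts these identities from Iwasawa's formula $B_{n,\chi}(x)=D^{n-1}\sum_a\chi(a)B_n((a-D+x)/D)$ and cites \cite[Theorem~2]{[Iwasawa]} directly for the nonvanishing, whereas you propose deriving the identities from the generating function and justifying nonvanishing via the functional equation and $L(\chi,1),L(\chi,2)\neq 0$; these are equivalent packagings of the same input.
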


We first recall that the Bernoulli polynomials $B_n(x)$ and Bernoulli numbers $B_n$, $n\geq 0$, are defined as  (see (\cite[pp. 7-9]{[Iwasawa]}))
\[
\dfrac{te^{xt}}{e^t-1}=\sum_{n=0}^\infty B_n(x) \dfrac{t^n}{n!}, \text{ and }
B_n(x) =\sum_{k=0}^n \binom{n}{k} B_k x^{n-k}, \quad n\geq 0.
\]
For example, $B_0(x)=1$, $B_1(x)=x+\dfrac{1}{2}$, $B_2(x)=x^2+x+\dfrac{1}{6}$.

 One can also define the generalized Bernoulli numbers $B_{n,\chi}$  and Bernoulli polynomials  $B_{n,\chi}(x)$, $n\geq0$, for a  Dirichlet character $\chi$ with conductor $f=f_\chi$ as (see \cite[pages 8-9]{[Iwasawa]})
 \[
 \sum_{a=1}^f \dfrac{\chi(a)te^{(a+x)t}}{e^{ft}-1}=\sum_{n=0}^\infty B_{n,\chi}(x) \dfrac{t^n}{n!},  \text{ and }
B_{n,\chi}(x) =\sum_{k=0}^n \binom{n}{k} B_{k,\chi} x^{n-k},\quad n\geq 0.
\]

\begin{proof}[Proof of Lemma~\ref{lem:mult}] 
Let $\chi$ be the quadratic character $\chi_{\Delta}$.  One has  (\cite[page 10]{[Iwasawa]})
\begin{equation}
\label{eq:1}
B_{n,\chi}(x)=D^{n-1}\sum_{a=1}^{D-1}\chi(a) B_n(\dfrac{a-D+x}{D}).
  \end{equation}
Substituting $n=1$ and $x=0$ in (\ref{eq:1}), one obtains
\[
\begin{aligned}
B_{1,\chi}&= \sum_{a=1}^{D-1}\chi(a) B_1 \left(\dfrac{a}{D}-1 \right)=\sum_{a=1}^{D-1}\chi(a)\left(\dfrac{a}{D}-\dfrac{1}{2}\right)\\
&=\dfrac{1}{D}\sum_{a=1}^{D-1}\chi(a)a -\dfrac{1}{2}\sum_{a=1}^{D-1}\chi(a)=\dfrac{1}{D}\sum_{a=1}^{D-1}\chi(a)a.
\end{aligned}
\]
Hence 
\begin{equation}
\label{eq:F'(1)}\sum_{a=1}^{D-1}\chi(a)a=DB_{1,\chi}=\begin{cases} 0 &\text { if $\chi$ is even}\\
 \not=0 &\text { if $\chi$ is odd }.
\end{cases}
\end{equation}
The last statement follows from \cite[pages 12-13, Theorem 2]{[Iwasawa]}. 

Substituting $n=2$ and $x=0$ in (\ref{eq:1}), one obtains
\[
\begin{aligned}
B_{2,\chi}&= D\sum_{a=1}^{D-1}\chi(a) B_2(\dfrac{a}{D}-1)
=D\sum_{a=1}^{D-1}\chi(a)\left(\dfrac{a^2}{D^2}-\dfrac{a}{D}+\dfrac{1}{6}\right)\\
&=\dfrac{1}{D}\sum_{a=1}^{D-1}\chi(a)a^2-\sum_{a=1}^{D-1}\chi(a)a  +\dfrac{1}{6}\sum_{a=1}^{D-1}\chi(a) =\dfrac{1}{D}\sum_{a=1}^{D-1}\chi(a)a^2 -\sum_{a=1}^{D-1}\chi(a)a.
\end{aligned}
\]
Hence
\[
\sum_{a=1}^{D-1}\chi(a)a^2 =D B_{2,\chi} + D\sum_{a=1}^{D-1}\chi(a)a.
\]
If $\chi$ is even, then $\sum\limits_{a=1}^{D-1}\chi(a)a^2 =D B_{2,\chi}\not=0$ (by \cite[pages 12-13, Theorem 2]{[Iwasawa]}).
If $\chi$ is odd, then $\sum\limits_{a=1}^{D-1}\chi(a)a^2=D\sum\limits_{a=1}^{D-1}\chi(a)a=D^2B_{1,\chi}\not=0$ (by \cite[pages 12-13, Theorem 2]{[Iwasawa]}).
\end{proof}
We are now ready to compute $r_{\Delta}(\Phi_1)$ explicitly. 

\begin{prop} 
\label{prop:mult_of_1}
$  r_{\Delta}(\Phi_1)= \begin{cases} 1 &\mbox{if $\chi_\Delta$ is odd} \\
2 & \mbox{if $\chi_\Delta$ is even}. 
\end{cases} $ 
\end{prop}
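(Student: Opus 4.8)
## Proof Proposal

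The plan is to compute the multiplicity of the root $x=1$ in $F_{\Delta}(x)$ by examining the successive derivatives of $F_{\Delta}$ at $x=1$, using Lemma~\ref{lem:mult} as the essential input. Recall that for a polynomial $F(x)$, the multiplicity of $x=1$ as a root is the smallest $k$ such that $F^{(k)}(1)\neq 0$. We already observed that $F_{\Delta}(1)=\sum_{a=1}^{D-1}\chi_{\Delta}(a)=0$, so $r_{\Delta}(\Phi_1)\geq 1$ always.

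First I would compute $F_{\Delta}'(1)$. Differentiating $F_{\Delta}(x)=\sum_{a=1}^{D-1}\chi_{\Delta}(a)x^a$ gives $F_{\Delta}'(x)=\sum_{a=1}^{D-1}\chi_{\Delta}(a)a x^{a-1}$, so $F_{\Delta}'(1)=\sum_{a=1}^{D-1}\chi_{\Delta}(a)a$. By Lemma~\ref{lem:mult}(a), this is nonzero precisely when $\chi_{\Delta}$ is odd and zero when $\chi_{\Delta}$ is even. This already settles the odd case: $r_{\Delta}(\Phi_1)=1$ when $\chi_{\Delta}$ is odd.

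For the even case, I would go one step further and compute $F_{\Delta}''(1)$. We have $F_{\Delta}''(x)=\sum_{a=1}^{D-1}\chi_{\Delta}(a)a(a-1)x^{a-2}$, so $F_{\Delta}''(1)=\sum_{a=1}^{D-1}\chi_{\Delta}(a)a^2-\sum_{a=1}^{D-1}\chi_{\Delta}(a)a$. When $\chi_{\Delta}$ is even, the second sum vanishes by Lemma~\ref{lem:mult}(a), and the first sum is nonzero by Lemma~\ref{lem:mult}(b). Hence $F_{\Delta}''(1)\neq 0$ while $F_{\Delta}'(1)=0$, which gives $r_{\Delta}(\Phi_1)=2$ in the even case. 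Combining the two cases yields the claimed formula.

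There is no real obstacle here since all the analytic content — the non-vanishing of the weighted character sums — has been packaged into Lemma~\ref{lem:mult} via generalized Bernoulli numbers and the theorem that $B_{1,\chi}\neq 0$ for $\chi$ odd and $B_{2,\chi}\neq 0$ for $\chi$ even. The only mild care needed is the elementary observation relating multiplicity of a root to vanishing of derivatives (equivalently, one may factor $F_{\Delta}(x)=(x-1)^{r}G(x)$ with $G(1)\neq 0$ and Taylor-expand around $x=1$), and tracking the bookkeeping that $\sum\chi_{\Delta}(a)=0$ always holds so no lower-order term interferes. Thus the proof is short and is essentially just the two derivative computations above.
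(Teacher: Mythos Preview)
Your proof is correct and follows essentially the same approach as the paper: compute $F_\Delta'(1)=\sum_a \chi_\Delta(a)a$ and $F_\Delta''(1)=\sum_a \chi_\Delta(a)a^2-\sum_a \chi_\Delta(a)a$, then invoke Lemma~\ref{lem:mult} to determine their (non)vanishing in the odd and even cases respectively. The paper's argument is identical in substance and structure.
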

\begin{proof} For simplicity, we will write $\chi$ for $\chi_\Delta$.
One has 
$
F_{\Delta}^\prime(1)=\sum\limits_{a=1}^{D-1}\chi(a)a.
$
By Lemma~\ref{lem:mult} (a), $r_\Delta(\Phi_1)=1$ if $\chi$ is odd and $r_\Delta(\Phi_1)\geq 2$ if $\chi$ is even.

Now we suppose that $\chi$ is even. 
By Lemma~\ref{lem:mult}, one has
\[
\begin{aligned}
F_{\Delta}''(1)&=\sum_{a=1}^{D-1}\chi(a)a(a-1)
=\sum_{a=1}^{D-1}\chi(a) a^2 - \sum_{a=1}^{D-1}\chi(a) a
=\sum_{a=1}^{D-1}\chi(a) a^2 \not=0.
\end{aligned}
\]
Hence $r_{\Delta}(\Phi_1)=2$.
\end{proof}
By Corollary \ref{cor:pair}, we have the following. 
\begin{cor}
\label{cor:Even_Multiplicity_Phi1_Phi2}
Suppose that $\Delta$ is even. Let $\tilde{F}_{\Delta}(x)$ be the modified Fekete polynomial as defined in \cref{def:modified_Fekete}. Then 
$ r_{\Delta}(\Phi_2) = \tilde{r}_{\Delta}(\Phi_1)= r_{\Delta}(\Phi_1)= \begin{cases} 1 &\mbox{if $\chi_\Delta$ is odd} \\
2 & \mbox{if $\chi_\Delta$ is even}. \end{cases}$
\end{cor}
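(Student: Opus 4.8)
The plan is to deduce Corollary~\ref{cor:Even_Multiplicity_Phi1_Phi2} directly from the two results already in hand: Proposition~\ref{prop:mult_of_1}, which computes $r_{\Delta}(\Phi_1)$, and Corollary~\ref{cor:pair}, which relates multiplicities of $\Phi_n$ and $\Phi_{2n}$ in $F_{\Delta}$ to multiplicities of $\Phi_n$ in $\tilde{F}_{\Delta}$ when $\Delta$ is even and $n$ is odd. The key observation is that for $\Delta$ even, the character $\chi_{\Delta}$ vanishes on even integers, so $F_{\Delta}(x)/x$ is a polynomial in $x^2$, i.e.\ $F_{\Delta}(x) = x\tilde{F}_{\Delta}(x^2)$, putting us squarely in the setting of Lemma~\ref{lem:pair} and Corollary~\ref{cor:pair}.

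The steps, in order: first, apply Corollary~\ref{cor:pair} with $n=1$ (which is odd). That corollary gives $r_{\Delta}(\Phi_1) = r_{\Delta}(\Phi_2) = \tilde{r}_{\Delta}(\Phi_1)$, so all three quantities in the asserted chain of equalities coincide. Second, invoke Proposition~\ref{prop:mult_of_1} to evaluate the common value as $1$ when $\chi_{\Delta}$ is odd and $2$ when $\chi_{\Delta}$ is even. One small point to check is that $x\nmid \tilde{F}_{\Delta}(x)$ and $x\nmid F_{\Delta}(x)/x$, i.e.\ that the factor of $x$ we peel off carries no multiplicity of $\Phi_1$: indeed $\tilde{F}_{\Delta}(0) = \chi_{\Delta}(1) = 1 \neq 0$, so splitting off $x$ does not affect the multiplicity of $\Phi_1 = x-1$, and the identification in Corollary~\ref{cor:pair} is clean. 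Assembling these gives exactly the displayed formula.

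There is essentially no obstacle here — the corollary is a bookkeeping consequence of the preceding two results, and the only thing requiring a moment's thought is confirming that the $n=1$ case of Corollary~\ref{cor:pair} applies verbatim (it does, since $1$ is odd and, for $\Delta$ even, $\Phi_1(x)$ divides $\tilde{F}_{\Delta}(x)$ precisely because $\tilde F_\Delta(1) = F_\Delta(1)=0$ by the computation $F_\Delta(1)=\sum_{a}\chi_\Delta(a)=0$ recalled at the start of this section). So the proof is a two-line citation chain, and I would write it as such.

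\begin{proof}
Since $\Delta$ is even, $\chi_\Delta$ vanishes on even integers, so $F_\Delta(x)=x\tilde F_\Delta(x^2)$ with $\tilde F_\Delta(0)=\chi_\Delta(1)=1\neq 0$; moreover $\tilde F_\Delta(1)=F_\Delta(1)=\sum_{a=1}^{D-1}\chi_\Delta(a)=0$, so $\Phi_1(x)$ divides $\tilde F_\Delta(x)$. Applying Corollary~\ref{cor:pair} with the odd integer $n=1$ yields
\[
r_\Delta(\Phi_2)=\tilde r_\Delta(\Phi_1)=r_\Delta(\Phi_1).
\]
The value of $r_\Delta(\Phi_1)$ is given by Proposition~\ref{prop:mult_of_1}, namely $1$ if $\chi_\Delta$ is odd and $2$ if $\chi_\Delta$ is even. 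Combining these gives the claimed equalities.
\end{proof}
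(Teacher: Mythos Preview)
Your proof is correct and follows exactly the paper's approach: the paper simply states ``By Corollary~\ref{cor:pair}, we have the following'' and relies on Proposition~\ref{prop:mult_of_1} for the value of $r_\Delta(\Phi_1)$. Your version merely spells out the application of Corollary~\ref{cor:pair} with $n=1$ and adds the harmless sanity checks $\tilde F_\Delta(0)\neq 0$ and $\tilde F_\Delta(1)=0$.
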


Next, let us investigate whether $x=-1$ is a root of $F_{\Delta}(x)$. We have the following observation. 

\begin{prop} \label{prop:roots_1}Suppose that $D$ is odd. 
\begin{enumerate}
\item If $\chi_\Delta$ is even then $x=-1$ is a root of $F_{\Delta}(x)$. 
\item If $\chi_\Delta$ is odd then $x=-1$ is not a root of $F_{\Delta}(x)$. 
\end{enumerate} 
\end{prop}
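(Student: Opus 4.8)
The plan is to evaluate $F_{\Delta}(-1)=\sum_{a=1}^{D-1}\chi_{\Delta}(a)(-1)^a$ directly by splitting the sum according to the parity of $a$. Since $D$ is odd, as $a$ ranges over $\{1,\dots,D-1\}$ the residues mod $D$ are all covered, and I want to compare the contribution of even $a$ with that of odd $a$. Writing $S_{\mathrm{ev}}=\sum_{a \text{ even}}\chi_{\Delta}(a)$ and $S_{\mathrm{odd}}=\sum_{a \text{ odd}}\chi_{\Delta}(a)$, we have $F_{\Delta}(-1)=S_{\mathrm{ev}}-S_{\mathrm{odd}}$, while $F_{\Delta}(1)=S_{\mathrm{ev}}+S_{\mathrm{odd}}=0$. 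Hence $F_{\Delta}(-1)=2S_{\mathrm{ev}}=-2S_{\mathrm{odd}}$, so it suffices to compute $S_{\mathrm{ev}}$.

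Next I would reindex the even terms: since $D$ is odd, the map $a\mapsto 2a$ (or rather $b\mapsto 2b \bmod D$) is a bijection, so one can write the even residues as $2b$ for $b$ running over a complete set, and obtain $S_{\mathrm{ev}}=\sum_{b=1}^{D-1}\chi_{\Delta}(2b)=\chi_{\Delta}(2)\sum_{b=1}^{D-1}\chi_{\Delta}(b)$ — but that last sum is $0$, which would force $F_{\Delta}(-1)=0$ always, contradicting case (2). So the reindexing must be done more carefully: the even integers in $\{1,\dots,D-1\}$ are $2,4,\dots,D-1$, and writing each as $2b$ gives $b\in\{1,\dots,(D-1)/2\}$, not a complete residue system. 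The cleaner route is to use $\chi_{\Delta}(-1)$: pair $a$ with $D-a$. If $a$ is even then $D-a$ is odd (since $D$ is odd), and $\chi_{\Delta}(D-a)=\chi_{\Delta}(-a)=\chi_{\Delta}(-1)\chi_{\Delta}(a)$. This pairing is a bijection between even and odd elements of $\{1,\dots,D-1\}$, giving $S_{\mathrm{odd}}=\chi_{\Delta}(-1)S_{\mathrm{ev}}$. Therefore $F_{\Delta}(-1)=S_{\mathrm{ev}}-S_{\mathrm{odd}}=(1-\chi_{\Delta}(-1))S_{\mathrm{ev}}$.

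Now the dichotomy is transparent: if $\chi_{\Delta}$ is even, $\chi_{\Delta}(-1)=1$ and $F_{\Delta}(-1)=0$, proving (1). If $\chi_{\Delta}$ is odd, $\chi_{\Delta}(-1)=-1$ and $F_{\Delta}(-1)=2S_{\mathrm{ev}}$, so I must show $S_{\mathrm{ev}}\neq 0$ to prove (2). Here I would observe that in the odd case $S_{\mathrm{ev}}=\tfrac12 F_{\Delta}(-1)$ and also, from $S_{\mathrm{ev}}+S_{\mathrm{odd}}=0$ with $S_{\mathrm{odd}}=-S_{\mathrm{ev}}$, that $S_{\mathrm{ev}}=\sum_{a \text{ even}}\chi_{\Delta}(a)$. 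To see this is nonzero, relate it to the value $L(0,\chi_{\Delta})$ or to the sum $\sum_{a=1}^{D-1}\chi_{\Delta}(a)a$: indeed $\sum_{a \text{ even},\,1\le a\le D-1}\chi_{\Delta}(a)$ can be rewritten, via $a = D - (D-a)$ and $\chi_{\Delta}(-1)=-1$, in terms of $\sum_a \chi_{\Delta}(a)\lfloor a/\text{something}\rfloor$; more efficiently, $2\sum_{a \text{ even}}\chi_{\Delta}(a) = \sum_{a=1}^{D-1}\chi_{\Delta}(a)(1-(-1)^a) \cdot(-1)\cdot(\text{sign})$, and one identifies $S_{\mathrm{ev}}$ with a nonzero multiple of $B_{1,\chi_{\Delta}}$ or of $h(\Q(\sqrt{d}))$. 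The cleanest identification: since $\chi_{\Delta}$ is odd, $\sum_{a=1}^{D-1}\chi_{\Delta}(a)a\neq 0$ by Lemma~\ref{lem:mult}(a); and $\sum_{a \text{ even},\,a=2b}\chi_{\Delta}(2b)$ with $b$ up to $(D-1)/2$ — I would instead directly compute $F_{\Delta}(-1)$ modulo a suitable prime or relate $S_{\mathrm{ev}}$ to $\sum_{b=1}^{(D-1)/2}\chi_{\Delta}(b)$ after extracting $\chi_{\Delta}(2)$, and then invoke that this partial character sum is a nonzero multiple of the class number (a standard consequence of the analytic class number formula for the odd quadratic character, equivalently $B_{1,\chi_{\Delta}}\neq 0$). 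The main obstacle is precisely this last nonvanishing in case (2): the parity argument is elementary, but ruling out the accidental vanishing of $S_{\mathrm{ev}}$ requires either the class number formula or a Bernoulli-number input of the type already used in Lemma~\ref{lem:mult}.
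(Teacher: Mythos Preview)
Your approach is correct and essentially the same as the paper's. Both proofs use the pairing $a\leftrightarrow D-a$ (which exchanges even and odd residues since $D$ is odd) together with $\chi_\Delta(D-a)=\chi_\Delta(-1)\chi_\Delta(a)$ to reduce $F_\Delta(-1)$ to $(1-\chi_\Delta(-1))S_{\mathrm{ev}}$; for the odd case both then identify $S_{\mathrm{ev}}=\chi_\Delta(2)\sum_{b=1}^{(D-1)/2}\chi_\Delta(b)$ and invoke the class-number formula to conclude this partial sum is nonzero. Your write-up is more meandering than necessary in part~(2): once you have $F_\Delta(-1)=2S_{\mathrm{ev}}$, the substitution $a=2b$ for $1\le b\le (D-1)/2$ immediately gives $S_{\mathrm{ev}}=\chi_\Delta(2)\sum_{b=1}^{(D-1)/2}\chi_\Delta(b)$, and the paper simply cites that this equals $\chi_\Delta(2)\,h(-D)/c$ with $c\in\{1,1/3\}$---no detour through $B_{1,\chi}$ or Lemma~\ref{lem:mult} is needed.
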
 
\begin{proof} For simplicity, we will write $\chi$ for $\chi_\Delta$.
First, let us consider the case $\chi$ is even. We have 
\begin{align*}
F_{\Delta}(-1) &=\sum_{a=1}^{D-1} \chi(a)(-1)^a              =\sum_{a=1}^{\frac{D-1}{2}}  \left [\chi(a)(-1)^a+ \chi(D-a)(-1)^{D-a} \right] \\
              &=\sum_{a=1}^{\frac{D-1}{2}} \chi(a) \left[ (-1)^a+(-1)^{D-a} \right]=0.
\end{align*} 
 
Now, let us consider the case $\chi$ is odd. Note that in this case $\Delta<0$ and $\Delta\equiv 1\pmod 4$ and hence $D=-\Delta\equiv 3\pmod 4$. We have 
\begin{align*}
F_{\Delta}(-1) &=\sum_{a=1}^{D-1} \chi(a)(-1)^a
              =\sum_{a=1}^{\frac{D-1}{2}}  \left [\chi(2a)(-1)^{2a}+ \chi(D-2a)(-1)^{D-2a} \right] \\
              &=2\sum_{a=1}^{\frac{D-1}{2}} \chi(2a)=2 \chi(2) \sum_{a=1}^{\frac{D-1}{2}} \chi(a).
\end{align*} 
By \cite[Table 3, page 199]{[UW]}, we have  $\sum\limits_{a=1}^{\frac{D-1}{2}} \chi(a)=  \dfrac{1}{c}h(-D),$ where $h(-D)$ is the class number of the imaginary quadratic field $\Q(\sqrt{-D})$ and $c=\begin{cases} 1 & \text{ if } D\equiv 7\pmod 8\\
\frac{1}{3} &\text{ if } D\equiv 3 \pmod 8\end{cases}$. Therefore, 
\begin{equation}
\label{eq:F(-1)}
F_{\Delta}(-1)= 2 \chi(2) \dfrac{1}{c}h(-D). 
\end{equation}
 We conclude that $F_p(-1) \neq 0$ if $\chi$ is odd. 
\end{proof}

\begin{lem} \label{half_sum}
Suppose $\chi=\chi_\Delta$ is even. Then 
$\sum\limits_{a=1}^{\frac{D-1}{2}} \chi(a)=0.$  
\end{lem}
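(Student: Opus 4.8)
The plan is to evaluate the partial sum $\sum_{a=1}^{(D-1)/2}\chi(a)$ by splitting the full sum $\sum_{a=1}^{D-1}\chi(a)=0$ into the contribution from $1\le a\le (D-1)/2$ and the contribution from $(D+1)/2\le a\le D-1$, and then using the substitution $a\mapsto D-a$ to relate the second half to the first. Since $D$ is odd (as $\chi$ is even, $\Delta>0$, and if $\Delta$ is even then $D=|\Delta|$ — wait, here one must be slightly careful, since this lemma is stated right after the odd-$D$ proposition but does not itself restrict $D$; I would first check whether the intended hypothesis is that $D$ is odd, and if $D$ is even handle it via the modified polynomial, but I expect the clean statement is for general even $\chi$ and the pairing argument works regardless of parity of $D$ because $\chi(a)=0$ whenever $\gcd(a,D)>1$).

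First I would write
\[
0=\sum_{a=1}^{D-1}\chi(a)=\sum_{a=1}^{\lfloor (D-1)/2\rfloor}\chi(a)+\sum_{a=\lceil (D+1)/2\rceil}^{D-1}\chi(a).
\]
In the second sum substitute $a=D-b$, so $b$ ranges over the first half, and use $\chi(D-b)=\chi(-b)=\chi(-1)\chi(b)=\chi(b)$ since $\chi$ is even. This gives that the second sum equals the first, hence $2\sum_{a=1}^{\lfloor(D-1)/2\rfloor}\chi(a)=0$, and therefore the partial sum vanishes. (When $D$ is even one splits at $D/2$; the term $a=D/2$ contributes $\chi(D/2)=0$ since $\gcd(D/2,D)>1$, so the same cancellation goes through.)

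The only genuine subtlety — and the step I would flag as the main point requiring care rather than the main obstacle — is making sure the index ranges match up exactly and that the middle term, when $D$ is even, is correctly accounted for; the identity $\chi(-1)=1$ for even $\chi$ is precisely what makes the pairing $a\leftrightarrow D-a$ preserve the character value, so the whole argument hinges on the evenness hypothesis and nothing deeper. I do not expect any real obstacle here: this is a one-line reflection argument once the bookkeeping is set up, and it is consistent with the fact used earlier (Proposition~\ref{prop:roots_1}) that $F_\Delta(-1)=0$ for even $\chi$ with $D$ odd, which is the ``generating-function'' shadow of the same cancellation.
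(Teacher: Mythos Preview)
Your proposal is correct and follows exactly the same approach as the paper: pair $a$ with $D-a$, use $\chi(D-a)=\chi(-1)\chi(a)=\chi(a)$ for even $\chi$, and deduce $2\sum_{a=1}^{(D-1)/2}\chi(a)=0$. The paper implicitly works with $D$ odd (consistent with the surrounding context), so your extra discussion of the even-$D$ case is unnecessary here but harmless.
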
 
\begin{proof} 
We have 
$
 0= \sum\limits_{a=1}^{D-1} \chi(a) =\sum\limits_{a=1}^{\frac{D-1}{2}}  \left[ \chi(a)+ \chi(D-a) \right]=2\sum\limits_{a=1}^{\frac{D-1}{2}} \chi(a). 
$
Hence $\sum\limits_{a=1}^{\frac{D-1}{2}} \chi(a)=0.$ 
\end{proof} 

\begin{prop} \label{prop: x=-1}
If $D$ is odd and $\chi_\Delta$ is even then $x=-1$ is a simple root of $F_{\Delta}(x)$.
\end{prop}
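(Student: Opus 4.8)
The plan is to show that under the hypotheses $D$ odd and $\chi = \chi_\Delta$ even, we have $F_\Delta(-1) = 0$ but $F_\Delta'(-1) \neq 0$. The vanishing $F_\Delta(-1)=0$ is already established in Proposition~\ref{prop:roots_1}(1), so the entire content is to verify that $-1$ is not a root of higher order, i.e. that $F_\Delta'(-1) \neq 0$. I would compute $F_\Delta'(-1)$ directly from the definition: since $F_\Delta(x) = \sum_{a=1}^{D-1} \chi(a) x^a$, we have $F_\Delta'(x) = \sum_{a=1}^{D-1} \chi(a) a x^{a-1}$, so
\[
-F_\Delta'(-1) = \sum_{a=1}^{D-1} \chi(a) a (-1)^{a}.
\]
The goal is therefore to show $\sum_{a=1}^{D-1} \chi(a) a (-1)^a \neq 0$.

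The key step is to relate this alternating weighted sum to the ordinary weighted sum $\sum_{a=1}^{D-1}\chi(a)a$, which is controlled by Lemma~\ref{lem:mult}. I would split the sum over $a$ according to the parity of $a$. Writing $\sum_{a \text{ even}} \chi(a) a - \sum_{a \text{ odd}} \chi(a) a = 2\sum_{a \text{ even}}\chi(a)a - \sum_{a=1}^{D-1}\chi(a)a$, and then parametrizing the even terms as $a = 2b$ with $1 \le b \le (D-1)/2$ (using that $D$ is odd so $a$ runs over all even residues), the even part becomes $\sum_{b=1}^{(D-1)/2} \chi(2b)\cdot 2b = 2\chi(2)\sum_{b=1}^{(D-1)/2} \chi(b) b$. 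So the quantity I want is
\[
\sum_{a=1}^{D-1}\chi(a)a(-1)^a = 4\chi(2)\sum_{b=1}^{(D-1)/2}\chi(b)b - \sum_{a=1}^{D-1}\chi(a)a.
\]
Since $\chi$ is even, Lemma~\ref{lem:mult}(a) gives $\sum_{a=1}^{D-1}\chi(a)a = 0$, so the problem reduces to showing $\sum_{b=1}^{(D-1)/2}\chi(b)b \neq 0$.

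To evaluate $\sum_{b=1}^{(D-1)/2}\chi(b)b$ I would again exploit the symmetry $a \leftrightarrow D-a$. Splitting the full sum $0 = \sum_{a=1}^{D-1}\chi(a)a$ into the lower and upper halves and using $\chi(D-a) = \chi(-a) = \chi(a)$ (as $\chi$ is even), one gets $0 = \sum_{b=1}^{(D-1)/2}\chi(b)b + \sum_{b=1}^{(D-1)/2}\chi(b)(D-b) = D\sum_{b=1}^{(D-1)/2}\chi(b) - \sum_{b=1}^{(D-1)/2}\chi(b)b + 2\sum_{b=1}^{(D-1)/2}\chi(b)b$; wait, more carefully: $\sum_{b}\chi(b)b + \sum_b \chi(b)(D-b) = D\sum_b\chi(b)$, so $\sum_{b=1}^{(D-1)/2}\chi(b)b = \tfrac{1}{2}\big(\sum_{a=1}^{D-1}\chi(a)a + D\sum_{b=1}^{(D-1)/2}\chi(b)\big)$. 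By Lemma~\ref{half_sum} the inner half-sum $\sum_{b=1}^{(D-1)/2}\chi(b)$ vanishes, and by Lemma~\ref{lem:mult}(a) so does $\sum_{a=1}^{D-1}\chi(a)a$ --- which would force the whole thing to be $0$, the wrong conclusion. So this naive symmetry argument is circular/too lossy, and the real work must go through generalized Bernoulli numbers.

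The right approach is to recognize $\sum_{b=1}^{(D-1)/2}\chi(b)b$ (equivalently $\sum_{a=1}^{D-1}\chi(a)a(-1)^a$) as essentially a value attached to $\chi$ times the nontrivial character mod $4$, or more directly to express it via $B_{2,\chi}$ as in Lemma~\ref{lem:mult}(b). Indeed, the cleanest route: from $-F_\Delta'(-1) = \sum_{a=1}^{D-1}\chi(a)a(-1)^a$ and the even/odd split above, combined with $\sum\chi(a)a = 0$, we need $\chi(2)\sum_{b=1}^{(D-1)/2}\chi(b)b \neq 0$, and $\sum_{b=1}^{(D-1)/2}\chi(b)b$ should be tied to $B_{2,\chi}$ (which is nonzero by Lemma~\ref{lem:mult}(b)) via a Bernoulli-polynomial identity analogous to Equation~\eqref{eq:1}, applied with the modulus doubled or with the character $\chi\cdot\left(\tfrac{\cdot}{4}\right)$ whose conductor-$2$ twist makes the alternating sum a genuine $L$-value. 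The main obstacle is precisely this last identification: writing $\sum_{a=1}^{D-1}\chi(a)a(-1)^a$ as a (nonzero) multiple of a generalized Bernoulli number. I expect to handle it by applying the Bernoulli-polynomial formula \eqref{eq:1} to the character $\psi = \chi\cdot\chi_{-4}$ (or $\chi\cdot\chi_8$, matching parity) of conductor $4D$, for which $B_{1,\psi} = \tfrac{1}{4D}\sum_{a}(\tfrac{-4}{a})\chi(a)a$ relates directly to the alternating sum, and then invoking the non-vanishing of $B_{1,\psi}$ (Iwasawa, \cite[pages 12--13, Theorem 2]{[Iwasawa]}) when $\psi$ is odd --- and $\psi = \chi\cdot\chi_{-4}$ is odd since $\chi$ is even and $\chi_{-4}$ is odd. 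Once $F_\Delta'(-1) \neq 0$ is in hand, the conclusion that $x=-1$ is a simple root is immediate.
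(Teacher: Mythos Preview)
Your reduction to showing $\sum_{b=1}^{(D-1)/2}\chi(b)b\neq 0$ is exactly the computation the paper carries out: the identity
\[
\sum_{a=1}^{D-1}\chi(a)a(-1)^a \;=\; 4\chi(2)\sum_{b=1}^{(D-1)/2}\chi(b)b
\]
is obtained there by the same even/odd split (the paper pairs $2a$ with $D-2a$, but the content is identical). So the first half of your proposal matches the paper perfectly.

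Your alarm in the middle is a false alarm caused by an algebraic slip. From the symmetry $a\leftrightarrow D-a$ one gets only $\sum_{b=1}^{(D-1)/2}\chi(b)b + \sum_{b=1}^{(D-1)/2}\chi(b)(D-b) = D\sum_b\chi(b)$, i.e.\ $\mathrm{upper}=-\mathrm{lower}$ once the half-sum vanishes. This says nothing about $\mathrm{lower}=\sum_{b}\chi(b)b$ individually; your displayed formula $\mathrm{lower}=\tfrac12(\text{full}+D\cdot\text{half})$ is not a consequence of that relation. So the ``wrong conclusion'' you reached is not forced, and there is no circularity to avoid.

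For the endgame the paper takes a different route from yours: it simply cites \cite[p.~28]{[UW]} for a closed formula expressing $\sum_{a=1}^{(D-1)/2}\chi(a)a$ in terms of a generalized Bernoulli number and concludes nonvanishing from that (note: the formula as printed in the paper with $B_{1,\chi}$ appears to be a transcription slip, since $B_{1,\chi}=0$ for even $\chi$; the intended reference in Urbanowicz--Williams gives a nonzero expression). Your proposal to recognize the alternating sum as essentially $B_{1,\chi\cdot\chi_{-4}}$ with $\chi\cdot\chi_{-4}$ odd is a legitimate and more self-contained alternative, but be aware that the identification is not as immediate as you suggest: $\chi_{-4}(a)=(-1)^{(a-1)/2}$ on odd $a$, not $(-1)^a$, and the defining sum for $B_{1,\chi\cdot\chi_{-4}}$ runs to $4D$, so one still has to unwind a CRT computation to connect it to $\sum_{a=1}^{D-1}\chi(a)a(-1)^a$. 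That extra bookkeeping is exactly what the [UW] citation packages for you.
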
 
\begin{proof}
First note that in the case that $\chi=\chi_\Delta$ is even, $D=\Delta\equiv 1\pmod 4$.
We already show that $F_{\Delta}(-1)=0$. To show that $x=-1$ is a simple root of $F_{\Delta}(x)$, we need to show $F_{\Delta}'(-1) \neq 0$. We have 
$ F_{\Delta}'(-1)=\sum\limits_{a=1}^{D-1} \chi(a) a (-1)^{a-1}=- \sum\limits_{a=1}^{D-1} (-1)^a \chi(a) a.$
Therefore, it is enough to show that $\sum\limits_{a=1}^{D-1} \chi(a) a (-1)^{a} \neq 0$.  We have 
\begin{align*}
\sum_{a=1}^{p-1} (-1)^a \chi(a)a 
              &=\sum_{a=1}^{\frac{D-1}{2}}  \left [ (-1)^{2a} \chi(2a)(2a)+ (-1)^{D-2a}\chi(D-2a)(D-2a) \right] \\
              &=\sum_{a=1}^{\frac{D-1}{2}} \chi(2a)(4a) - D \chi(2) \sum_{a=1}^{\frac{D-1}{2}} \chi(a)= 4 \chi(2) \sum_{a=1}^{\frac{D-1}{2}} \chi(a) a.
\end{align*} 
By \cite[page 28]{[UW]}, we have $\sum_{a=1}^{\frac{D-1}{2}} \chi(a) a = \dfrac{\bar{\chi}(2)-4}{4}B_{2,\chi}\not=0.$ Consequently,  
 $F_{\Delta}'(-1) \neq 0$. We conclude that $x=-1$ is a simple root of $F_{\Delta}(x)$. 
\end{proof} 

In summary, we have the following.
\begin{prop}
Suppose that $\Delta$ is odd. Then 
$  r_{\Delta}(\Phi_2) = \begin{cases} 0 &\mbox{if $\chi$ is odd} \\
1 & \mbox{if $\chi$ is even.} \end{cases} $  

\end{prop}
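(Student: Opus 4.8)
The plan is to combine the results already established in this section. The statement concerns $r_{\Delta}(\Phi_2)$ when $\Delta$ is odd, i.e. the multiplicity of $x=-1$ as a root of $F_{\Delta}(x)$, and it splits into two cases according to the parity of $\chi=\chi_{\Delta}$. For the case $\chi$ odd, Proposition \ref{prop:roots_1}(2) already shows that $F_{\Delta}(-1)\neq 0$, so $x=-1$ is not a root at all, giving $r_{\Delta}(\Phi_2)=0$. For the case $\chi$ even, Proposition \ref{prop:roots_1}(1) shows $F_{\Delta}(-1)=0$ and Proposition \ref{prop: x=-1} shows that this root is simple, giving $r_{\Delta}(\Phi_2)=1$.

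So the proof is essentially a two-line assembly: invoke Proposition \ref{prop:roots_1} to dispose of both the vanishing and non-vanishing at $x=-1$, and then invoke Proposition \ref{prop: x=-1} to upgrade ``$x=-1$ is a root'' to ``$x=-1$ is a simple root'' in the even case. One should remark, perhaps, that the hypothesis ``$\Delta$ odd'' is equivalent to ``$D$ odd'' (since $D=|\Delta|$), which is the hypothesis under which Propositions \ref{prop:roots_1} and \ref{prop: x=-1} are stated, so the translation of hypotheses is immediate.

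There is no real obstacle here; the genuine work was done in the preceding propositions (the class-number formula input from \cite{[UW]} for $F_{\Delta}(-1)\neq 0$ in the odd case, and the identity $\sum_{a=1}^{(D-1)/2}\chi(a)a=(\bar\chi(2)-2)B_{1,\chi}\neq 0$ together with the non-vanishing of $B_{1,\chi}$ for $\chi$ odd from \cite{[Iwasawa]} in the even case). If anything, the only thing to be careful about is making sure the statement of the proposition matches: it asserts $r_{\Delta}(\Phi_2)=0$ for $\chi$ odd and $r_{\Delta}(\Phi_2)=1$ for $\chi$ even, which is exactly what the cited results yield. Thus the write-up is:

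\begin{proof}
Since $D=|\Delta|$, the hypothesis that $\Delta$ is odd is equivalent to $D$ being odd, so Propositions \ref{prop:roots_1} and \ref{prop: x=-1} apply. If $\chi=\chi_{\Delta}$ is odd, then by Proposition \ref{prop:roots_1}(2) we have $F_{\Delta}(-1)\neq 0$, so $x=-1$ is not a root of $F_{\Delta}(x)$ and therefore $r_{\Delta}(\Phi_2)=0$. If $\chi$ is even, then by Proposition \ref{prop:roots_1}(1) we have $F_{\Delta}(-1)=0$, and by Proposition \ref{prop: x=-1} the root $x=-1$ is simple; hence $r_{\Delta}(\Phi_2)=1$.
\end{proof}
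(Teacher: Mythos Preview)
Your proposal is correct and matches the paper's approach exactly: the paper introduces this proposition with the phrase ``In summary, we have the following'' and gives no separate proof, precisely because it is an immediate combination of Proposition~\ref{prop:roots_1} and Proposition~\ref{prop: x=-1}, as you have written out.
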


\section{Some general results about $r(\Phi_n)$ and $\tilde{r}(\Phi_n)$ for big $n$ and $n|D$} 
\label{sec:general_n}
In this section, we discuss some  general results about $r(\Phi_n)$ and $\tilde{r}(\Phi_n)$ for big $n$. We first have the following theorem.

\begin{thm}
\label{prop:positive_big_divisor}
Suppose that $\Delta=dn$ is odd and $n>d>1.$ Then $\zeta_n$ is a simple root of $F_{\Delta}(x).$ In other words, $r_{\Delta}(\Phi_n) = 1.$ 
\end{thm}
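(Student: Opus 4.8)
The plan is to evaluate $F_\Delta$ and its derivative at $\zeta_n$ using the Gauss sum machinery from Section~3. Write $\Delta = dn$ with $n > d > 0$, both $\Delta$ and hence $D = |\Delta| = dn$ odd. Since $n \mid D$ and $n \neq D$ (because $d > 1$; note $d=1$ would force $\Delta = n$, excluded by $n > d$), the fundamental property $G(b,\chi_\Delta) = \chi_\Delta(b)G(1,\chi_\Delta)$ together with the vanishing of $G(b,\chi_\Delta)$ for $\gcd(b,D) > 1$ gives $F_\Delta(\zeta_n) = 0$. So the content is that the root is \emph{simple}, i.e. $F_\Delta'(\zeta_n) \neq 0$.

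First I would compute $F_\Delta'(\zeta_n)$ by the same summation-by-parts idea used in Section~4: $F_\Delta'(x) = \sum_{a=1}^{D-1}\chi_\Delta(a)\,a\,x^{a-1}$, so $\zeta_n F_\Delta'(\zeta_n) = \sum_{a=1}^{D-1}\chi_\Delta(a)\,a\,\zeta_n^{a}$. Now group the indices $a$ according to their residue class $r$ modulo $n$: writing $a = r + jn$ with $0 \le r \le n-1$ and $j$ ranging appropriately (and using $\chi_\Delta(a) = \chi_\Delta(r+jn)$, $\zeta_n^a = \zeta_n^r$), the sum becomes $\sum_{r} \zeta_n^r \sum_{j}\chi_\Delta(r+jn)(r+jn)$. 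The key observation is that $\chi_\Delta = \chi_d \cdot \chi_n$ factors through the CRT decomposition $D = dn$ (since $d,n$ are coprime when $\Delta = dn$ is a fundamental discriminant product — one should check coprimality of $d$ and $n$ carefully, as squarefreeness of $\Delta$ forces it), so on a fixed residue class mod $n$ the character $\chi_n$ is constant and $\chi_d$ cycles through all residues. I expect the inner sum over $j$ to collapse, via the vanishing $\sum \chi_d = 0$, leaving a clean expression of the form $c \cdot n \cdot \chi_n(r)\cdot(\text{a } \chi_d\text{-weighted sum})$, reducing $F_\Delta'(\zeta_n)$ to a nonzero multiple of a Gauss-type sum $G(1,\chi_n)$ or of $\sum_{a}\chi_d(a)a$, which is controlled by Lemma~\ref{lem:mult}.

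The main obstacle I anticipate is the bookkeeping in the double sum: correctly handling the ranges of $r$ and $j$ (the top index $D-1$ rather than $D$, and the fact that $a$ must be coprime to $d$ for the term to survive), and isolating exactly which pieces vanish. The cleanest route is probably to write $F_\Delta'(\zeta_n) = \frac{d}{dx}\big|_{x=\zeta_n}\big(\sum_a \chi_\Delta(a)x^a\big)$ and instead use the relation $F_\Delta(x) \equiv (\text{something}) \cdot \Phi_n(x) \pmod{\Phi_n(x)^2}$, i.e. expand $F_\Delta$ in the basis of $\zeta_n$-conjugates; then $r_\Delta(\Phi_n) = 1$ becomes the statement that this "something," an explicit Gauss sum, is nonzero — and $|G(1,\chi_n)|^2 = n \neq 0$ finishes it. I would also invoke Lemma~\ref{lem:mult}(a) or (b) to handle the sub-case where the relevant linear sum $\sum \chi_d(a) a$ appears, since its (non)vanishing is exactly what that lemma records. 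The hypothesis $n > d$ should enter to guarantee $n \neq D$ and to ensure the grouping is nontrivial; I would keep an eye on whether it is truly needed or whether $n \geq 2$, $n\mid D$, $n\neq D$ suffices.
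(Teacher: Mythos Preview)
Your setup is right: group $\zeta_n F'_\Delta(\zeta_n)$ by residues mod $n$ and factor $\chi_\Delta$ through the CRT decomposition. But the expectation that the expression collapses to a single nonzero Gauss sum, or to a scalar multiple of $\sum_a \chi_d(a)a$, is where the proposal breaks down. Writing $A_m$ for the coefficient of $\zeta_n^m$, after the simplification you describe one finds that $A_m$ is (up to a unit factor) equal to
\[
C_{\tilde m} \;=\; \sum_{k=0}^{d-1}\Bigl(\tfrac{k+\tilde m}{d}\Bigr)(k+\tilde m), \qquad \tilde m \equiv n^{-1}m \pmod d,
\]
and this genuinely depends on $m$. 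So $\zeta_n F'_\Delta(\zeta_n)$ is a $\Q$-linear combination of the $\zeta_n^m$ with \emph{varying} coefficients; it is not a Gauss sum times a constant, and the route via $|G(1,\chi_n)|^2=n$ does not materialize.

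The paper instead argues by contradiction: assume $F'_\Delta(\zeta_n)=0$, invoke the $\Q$-linear independence of $\{\zeta_n^m:\gcd(m,n)=1\}$ (Lemma~\ref{lem:cyclotomic_basis_square_free}, using that $n$ is squarefree) to force every reachable $C_{\tilde m}$ to vanish, and then compute two specific values. One gets $C_1=d\,B_{1,\chi_{d^*}}$, nonzero exactly when $\chi_{d^*}$ is odd; when $\chi_{d^*}$ is even one needs $C_2=d\neq 0$ instead. The hypothesis $n>d$ is \emph{not} used to ensure $n\neq D$ (that only needs $d>1$) but to guarantee that both $\tilde m=1$ and $\tilde m=2$ arise from some $m\in\{1,\dots,n-1\}$ with $\gcd(m,n)=1$: one takes $m=n-d$ for the first, and a short elementary lemma produces the second. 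Your appeal to Lemma~\ref{lem:mult} is on target for the $C_1$ step, but you are missing the linear-independence/contradiction structure and the case split forced by the parity of $\chi_{d^*}$. Note also that since $\Delta>0$ here, $\chi_\Delta$ is even and $F'_\Delta(1)=0$, so one cannot shortcut via the argument of Theorem~\ref{prop:negative_big_divisor}.
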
 

\begin{proof}
Recall that 
$  F_{\Delta}(x)= \sum\limits_{a=1}^{\Delta-1} \left(\frac{\Delta}{a} \right)x^a.$ 
We already know that $F_{\Delta}(\zeta_n)=0.$ We need to show that $F'_{\Delta}(\zeta_n) \neq 0$. We have 
$ F'_{\Delta}(x)= \sum\limits_{a=1}^{\Delta-1} a \left(\frac{\Delta}{a} \right)x^{a-1}.$  
We then see that 
\begin{align*}
\zeta_n F'_{\Delta}(\zeta_n)&= \sum_{a=1}^{\Delta-1}\left(\frac{\Delta}{a} \right) a \zeta_n^a  
=\sum_{m=1}^{n} \left[\sum_{a \equiv m \pmod{n}} \left(\frac{\Delta}{a} \right) a \right] \zeta_n^k 
= \sum_{m=1}^{n} A_m \zeta_n^k,
\end{align*} 
where 
\[ A_m= \sum_{a \equiv m \pmod{n}} \left(\frac{\Delta}{a} \right) a , \quad 1 \leq m \leq n. \] 
We know that $A_m=0$ for every $m$ with $(m,n)>1$.  Now suppose that $F'_\Delta(\zeta_n)=0$. Because $n$ is squarefree, the set $\{\zeta_n^m\mid (m,n)=1,1\leq m\leq n\}$ is a $\Q$-linearly independent (see  \cref{lem:cyclotomic_basis_square_free}). Hence $A_m=0$ for every $m$ with $(m,n)=1$.

We observe that we can rewrite $A_m$ as follows 
\begin{align*}
 A_m &=\sum_{k=0}^{d-1} (kn+m) \left(\frac{\Delta}{kn+m} \right) 
     = \sum_{k=0}^{d-1} (kn+m) \left(\frac{kn+m}{dn} \right) \\
     &= \left(\frac{m}{n} \right) \sum_{k=0}^{d-1} (kn+m) \left(\frac{kn+m}{d} \right) = m \left(\frac{m}{n} \right) \sum_{k=0}^{d-1}  \left(\frac{kn+m}{d} \right)+ n \left(\frac{m}{n} \right) \sum_{k=0}^{d-1} \left(\frac{kn+m}{d} \right) k.
\end{align*} 

We note that $\{ kn+m \}_{k=0}^{d-1}$ is a complete system of residues modulo $d$. Therefore 
\[ \sum_{k=0}^{d-1}  \left(\frac{kn+m}{d} \right)=0 .\] 
Consequently, we have 

\[ E_m= \sum_{k=0}^{d-1} \left(\frac{kn+m}{d} \right) k = \sum_{k=1}^{d-1} \left(\frac{kn+m}{d} \right) k= 0 , \forall 1 \leq m \leq n-1. \] 
Let $\widetilde{m}$ be the unique number in $\{0, 1, \ldots, d-1\}$ such that $\widetilde{m} n \equiv m \pmod{d}$ (in other words $\widetilde{m}=n^{-1}m$ modulo $d$). We then have 
\[ E_m = \left(\frac{n}{m} \right) \sum_{k=0}^{d-1} \left(\frac{k+\widetilde{m}}{d} \right) k =\left(\frac{n}{m} \right) \sum_{k=0}^{d-1} \left(\frac{k+\widetilde{m}}{d} \right) (k+ \widetilde{m}) .\] 
The last inequality follows from the fact that  $\sum_{k=0}^{d-1} \left(\frac{k+\widetilde{m}}{d} \right) =0.$ Let 
\[ C_{\widetilde{m}} = \sum_{k=0}^{d-1} \left(\frac{k+\widetilde{m}}{d} \right) (k+ \widetilde{m}) .\] 
Then $C_{\widetilde{m}}=0$ by our assumption (note that $\widetilde{m}$ depends on $m$.) Under the assumption that $n>d$, 
let us consider $m= n-d.$ Then $\tilde{m}=1.$ We have 
\begin{align*} C_1 &= \sum_{k=0}^{d-1}  \left(\frac{k+1}{d} \right) (k+1)  
= \sum_{u=1}^{d-1} \left(\frac{u}{d} \right)u = d B_{1, \chi_{d^{*}}}.
\end{align*}
Here $d^{*}= (-1)^{\frac{d-1}{2}}d.$ If $d^{*}<0$ then $C_1 \neq 0.$ In this case, we are done. Now suppose $d^{*}>0.$

We have the following small lemma. 
\begin{lem}
There exists $m$ such that  $1 \leq m <n$, $\gcd(m,n)=1$ and $  2n \equiv m \pmod{d}.$ 
For this $m$, $\tilde{m}=2.$
\end{lem}
\begin{proof}
Let us write $n=qd+r$, with $1\leq r\leq d-1$. Under the condition, $n>d$, we have $q \geq 1.$ We will look for of the form $m= hd+2r$ for some $0 \leq h \leq q-1$. 
 Note that for this choice of $m$, conditions $(1)$ and $(3)$ are satisfied. We will look for an $h$ such that $m$ satisfies condition (2) as well. 
 We have 
\begin{align*} 
\gcd(n, m)&= \gcd(n, hd+2r)=\gcd(n, 2n-(hd+2r)) \\
&= \gcd(n, (2q-h)d)=\gcd(n, 2q-h). 
\end{align*}
Note that $q+1 \leq 2q-h \leq 2q.$
We claim that  there exists an integer $t$ such that $q+1 \leq 2^t \leq 2q.$
In fact, let $s$ be the largest integer such that $2^s \leq q+1.$ If $q+1=2^s$, we can take $t=s$. Otherwise, $2^s<q+1.$ Hence 
$ 2^{s+1} <2(q+1) =2q+2.$ From this, we can easily see that $2^{s+1} \leq 2q.$  In this case, we can take $t=s+1.$

Now, let $0 \leq h \leq q-1$ be the number such that $2q-h =2^t.$
Let $m=hd+2r.$ Our argument shows that $\gcd(n,m)=\gcd(n, 2^t)=1$.
\end{proof}

For this choice of $m$, $\tilde{m}=2$. Let us consider 

\begin{align*} 
C_2 &= \sum_{k=0}^{d-1}  \left(\frac{k+2}{d} \right) (k+2)  
= \sum_{u=2}^{d+1} \left(\frac{u}{d} \right)u \\
&= \sum_{u=1}^d \left(\frac{u}{d} \right)u - 1 + \left(\frac{d+1}{d} \right)(d+1)
= d B_{1, \chi_{d^{*}}}+d=d \neq 0.
\end{align*}
This completes the proof. 
\end{proof}

 When $\Delta <0$, the situation becomes a bit easier. 

\begin{thm}
\label{prop:negative_big_divisor}
If $\Delta<0$ and $n$ is a squarefree proper divisor of $\Delta$ then $\zeta_n$ is a simple root of $F_\Delta$.
\end{thm}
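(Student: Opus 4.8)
The plan is to mimic the structure of the proof of Theorem~\ref{prop:positive_big_divisor}, but to exploit the sign hypothesis $\Delta<0$ to shorten the argument. As in that proof, we know $F_\Delta(\zeta_n)=0$ from the Gauss sum vanishing, so it suffices to show $F'_\Delta(\zeta_n)\neq 0$. Writing $\Delta = dn$ with $n$ a proper squarefree divisor (so $|d|\geq 2$, and since $\Delta<0$ we have $dn<0$), I would group the terms of $\zeta_n F'_\Delta(\zeta_n)=\sum_a \left(\frac{\Delta}{a}\right) a\,\zeta_n^a$ by residue class $m$ modulo $n$, obtaining $\zeta_n F'_\Delta(\zeta_n)=\sum_{m=1}^{n}A_m\zeta_n^m$ with $A_m=\sum_{a\equiv m\,(n)}\left(\frac{\Delta}{a}\right)a$. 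Since $n$ is squarefree, Lemma~\ref{lem:cyclotomic_basis_square_free} gives that $\{\zeta_n^m : (m,n)=1\}$ is $\Q$-linearly independent, so if $F'_\Delta(\zeta_n)=0$ then $A_m=0$ for all $m$ coprime to $n$.

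The key computational step is the same reduction as before: expanding $A_m$ over the complete residue system $\{kn+m\}_{k=0}^{|d|-1}$ modulo $d$ (here I would take $a$ running over $1\le a\le |\Delta|-1$, with $|d|$ summands), the constant part drops out because $\sum_{k}\left(\frac{kn+m}{d}\right)=0$, and one is left with $A_m = n\left(\frac{m}{n}\right)\left(\frac{n}{m}\right) C_{\widetilde m}$ where $C_{\widetilde m}=\sum_{k=0}^{|d|-1}\left(\frac{k+\widetilde m}{d}\right)(k+\widetilde m)$ and $\widetilde m \equiv n^{-1}m \pmod{d}$. Choosing $m$ so that $\widetilde m = 1$ (which is possible since we only need one residue class coprime to $n$ — e.g. by CRT pick $m$ with $m\equiv n\pmod d$ and $m$ coprime to $n$, using that $n$ is squarefree and $\gcd(n,d)$ divides $n$... actually one must be slightly careful here, see below), we get $C_1 = \sum_{u=1}^{|d|-1}\left(\frac{u}{d}\right)u = |d|\,B_{1,\chi_{d^*}}$ where $d^* = (-1)^{(|d|-1)/2}|d|$ up to the sign conventions of the Kronecker symbol, i.e. $d^*$ is the fundamental discriminant attached to the odd part of $d$.

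The point where the hypothesis $\Delta<0$ makes life easy is the sign bookkeeping: since $\Delta=dn<0$ and $n>0$, we have $d<0$; combined with the sign built into the Kronecker symbol $\left(\frac{\cdot}{d}\right)$ when $d<0$, the relevant character $\chi_{d^*}$ is \emph{odd}, and Lemma~\ref{lem:mult}(a) (via \cite[pages 12-13, Theorem 2]{[Iwasawa]}) then gives $B_{1,\chi_{d^*}}\neq 0$ directly, so $C_1\neq 0$ and hence $A_m\neq 0$, a contradiction. Unlike in Theorem~\ref{prop:positive_big_divisor}, there is no residual case $d^*>0$ to dispose of with the auxiliary $C_2$ computation, which is why the negative case is genuinely shorter.

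The main obstacle I anticipate is the existence of a residue class $m$ with $1\le m<n$, $\gcd(m,n)=1$, and a prescribed value of $\widetilde m = n^{-1}m \bmod d$: one needs $\gcd(n,d)$ to interact well with the congruence, and in general $\gcd(n,d)$ need not be $1$. However, note that any prime dividing both $n$ and $d$ would divide $\Delta$ to a square power, contradicting that $\Delta$ is a fundamental discriminant (squarefree in the odd part, and the power of $2$ controlled); so in fact $\gcd(n,d)$ is essentially $1$, or at worst a factor of $2$, and CRT does produce the desired $m$ after a short check. Once that lemma is in hand — and it is simpler than the corresponding lemma in the previous proof because we do not need $\widetilde m = 2$ — the argument closes immediately by the oddness of $\chi_{d^*}$. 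I would therefore structure the write-up as: (1) reduce to $A_m=0$ for $(m,n)=1$; (2) the algebraic manipulation giving $A_m = (\text{unit})\cdot C_{\widetilde m}$; (3) a short existence lemma for a suitable $m$ with $\widetilde m=1$; (4) conclude $C_1 = |d| B_{1,\chi_{d^*}}\neq 0$ using $d<0$ and \cite[Theorem 2]{[Iwasawa]}.
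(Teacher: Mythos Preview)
Your proposal has a genuine gap, and it misses the much simpler argument the paper actually uses. After your step~(1) --- reducing to $A_m=0$ for all $m$ with $(m,n)=1$ --- the paper does \emph{not} manipulate $A_m$ into a $C_{\tilde m}$ at all. It simply notes that $A_m=0$ also holds whenever $(m,n)>1$, so
\[
F'_\Delta(1)=\sum_{m=1}^{n}A_m=0,
\]
contradicting Proposition~\ref{prop:mult_of_1}: since $\Delta<0$ the character $\chi_\Delta$ is odd, hence $r_\Delta(\Phi_1)=1$ and $F'_\Delta(1)\neq 0$. That is the entire proof; no Bernoulli numbers, no existence lemma for $\tilde m$.

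Your longer route breaks in two places. First, the existence step~(3) fails when $n<|d|$: you need $m\in\{1,\dots,n-1\}$ with $(m,n)=1$ and $m\equiv n\pmod{|d|}$, but if $|d|>n$ the only candidate in that range is $m=n$, which is not coprime to $n$. The theorem covers \emph{all} proper squarefree divisors (for instance $n=3$ when $\Delta=-3p$, where $|d|=p>3$), so this case cannot be avoided. Second, your parity claim is wrong: for $u>0$ one has $\left(\frac{u}{d}\right)=\left(\frac{u}{|d|}\right)$, so $C_1=\sum_{u=1}^{|d|-1}\left(\frac{u}{|d|}\right)u=|d|\,B_{1,\chi_{|d|^*}}$ with $|d|^*=(-1)^{(|d|-1)/2}|d|$, and this character is odd only when $|d|\equiv 3\pmod 4$. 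From $\Delta<0$, $\Delta\equiv 1\pmod 4$ one gets $|d|\,n\equiv 3\pmod 4$, so the case $|d|\equiv 1$, $n\equiv 3\pmod 4$ certainly occurs and gives $B_{1,\chi_{|d|^*}}=0$; you would then need the $C_2$ trick after all, and the promised simplification over Theorem~\ref{prop:positive_big_divisor} evaporates.
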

\begin{proof}
We already know that $F_{\Delta}(\zeta_n)=0.$ We need to show that $F'_{\Delta}(\zeta_n) \neq 0$. We have 
$ F'_{\Delta}(x)= \sum\limits_{a=1}^{\Delta-1} a \left(\dfrac{\Delta}{a} \right)x^{a-1}.$ 
We then see that 
\begin{align*}
\zeta_n F'_{\Delta}(\zeta_n)&= \sum_{a=1}^{\Delta-1}\left(\frac{\Delta}{a} \right) a \zeta_n^a  
=\sum_{k=1}^{n} \left[\sum_{a \equiv k \pmod{n}} \left(\frac{\Delta}{a} \right) a \right] \zeta_n^k 
= \sum_{k=1}^{n} A_k \zeta_n^k,
\end{align*} 
where 
\[ A_k= \sum_{a \equiv k \pmod{n}} \left(\frac{\Delta}{a} \right) a , \quad 1 \leq k \leq n. \] 
We know that $A_k=0$ for every $k$ with $(k,n)>1$.  Now suppose that $F'_\Delta(\zeta_n)=0$. Because $n$ is squarefree, the set $\{\zeta_n^k\mid (k,n)=1,1\leq k\leq n\}$ is a $\Q$-basis of $\Q(\zeta_n)$. Hence $A_k=0$ for every $k$ with $(k,n)=1$. This implies that 
$ F'_{\Delta}(1)= \sum\limits_{m=1}^{n} A_m = 0. $
This contradicts to 
 \cref{prop:mult_of_1}. Therefore $F'_\Delta(\zeta_n)$ must be nonzero, as required. 
\end{proof}
When $\Delta$ is negative and even, we have the following partial result. 
\begin{thm}
If $\Delta=-4d<0$, where $d$ is odd squarefree and $d\equiv 1 \pmod 4$. Assume that $d=mq$ where $q$ is a prime number and $m$ is a positive number. Let $S_{m}= \{1 \leq k \leq 2m| \gcd(2m, k)=1 \}.$  Let us assume further that $S_{m}$ contains at least $q/2$ distinct residues modulo $q.$ Then $\zeta_{4m}$ is a simple root of $F_{\Delta}(x).$
\end{thm}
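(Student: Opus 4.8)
The strategy mirrors the proof of Theorem~\ref{prop:positive_big_divisor}, but now working with the modified Fekete polynomial $\tilde{F}_{\Delta}(x)$ and the $\Q$-linear independence results for $\zeta_{4m}$-type sets established in the lemmas of Section~2. Since $\Delta = -4d$ with $d$ odd squarefree, the character $\chi_{\Delta}$ vanishes on even integers, so by Definition~\ref{def:modified_Fekete} we have $F_{\Delta}(x) = x\tilde{F}_{\Delta}(x^2)$. By Corollary~\ref{cor:pair}, showing that $\zeta_{4m}$ is a simple root of $F_{\Delta}(x)$ is equivalent to showing that $\zeta_{2m}$ is a simple root of $\tilde{F}_{\Delta}(x)$ (here $4m \mid 4d = |\Delta|$ and $4m$ is of the form $4\cdot(\text{odd})$). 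We already know $\tilde{F}_{\Delta}(\zeta_{2m}) = 0$ from the Gauss sum vanishing $F_{\Delta}(\zeta_{4m})=0$ for $\gcd(4m,4d)>1$, so the task is to prove $\tilde{F}'_{\Delta}(\zeta_{2m}) \neq 0$.

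First I would write $\zeta_{2m}\tilde{F}'_{\Delta}(\zeta_{2m})$ as a sum $\sum_{j} A_j \zeta_{2m}^{j}$ where $A_j = \sum_{a \equiv j \pmod{2m},\ a\ \mathrm{odd}} \left(\frac{\Delta}{a}\right)\cdot(\tfrac{a-1}{2})$ (accounting for the reindexing $a = 2b+1$ in $\tilde F_\Delta$), the sum being over odd residues $j$ modulo $2m$; the terms with $\gcd(j,2m)>1$ vanish because $\chi_\Delta$ is periodic mod $4d$ and the Kronecker symbol kills those. Then, invoking the $\Q$-linear independence of $\{\zeta_{4m}^k \mid 1\le k\le 2m,\ (k,2m)=1\}$ (the Section~2 lemma for odd squarefree $m$, noting $2m\mid 2d$ and the relevant set sits inside the independent family), the assumption $\tilde F'_\Delta(\zeta_{2m})=0$ would force $A_j = 0$ for all $j$ coprime to $2m$. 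The next step is to unwind $A_j$: writing the contributing odd integers as $a = 2mk + j$ for $k$ in a range of length $q$ (since $d = mq$), factor $\left(\frac{\Delta}{2mk+j}\right) = \left(\frac{-1}{2mk+j}\right)\left(\frac{2mk+j}{2m}\right)\left(\frac{2mk+j}{q}\right)$ up to the standard Kronecker-symbol manipulations. Because $\{2mk+j\}_{k=0}^{q-1}$ is a complete residue system modulo $q$, the ``constant'' part $\sum_k \left(\frac{2mk+j}{q}\right) = 0$, so $A_j$ reduces (up to a nonzero factor depending on $j$) to a multiple of $\sum_{k} \left(\frac{k+\widetilde{j}}{q}\right)(k+\widetilde{j}) = qB_{1,\chi_{q^*}}$ plus correction terms, exactly as in the proof of Theorem~\ref{prop:positive_big_divisor}.

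The role of the hypothesis that $S_m = \{1\le k\le 2m : \gcd(2m,k)=1\}$ contains at least $q/2$ distinct residues modulo $q$ is the crux: as $j$ ranges over residues coprime to $2m$, the associated shift $\widetilde{j} \equiv (2m)^{-1}j \pmod q$ ranges over at least $q/2$ distinct values mod $q$; I would then argue that among these there must be one for which the relevant Gauss-sum-type quantity $C_{\widetilde j} = \sum_{k=0}^{q-1}\left(\frac{k+\widetilde j}{q}\right)(k+\widetilde j)$ is nonzero, contradicting $A_j = 0$. Concretely, the differences $C_{\widetilde j+1} - C_{\widetilde j}$ are controlled (they differ by a telescoping term of the form $q\left(\frac{\widetilde j}{q}\right)$ or similar, using $\sum\left(\frac{u}{q}\right)=0$), so the $C_{\widetilde j}$ cannot all vanish over any set of $q/2$ consecutive-in-some-sense residues; alternatively one shows directly that $\sum_{\widetilde j} C_{\widetilde j}^2$ or a weighted version is a nonzero multiple of $q\cdot h(-q)$-type quantity. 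The main obstacle is precisely making this counting argument rigorous: ensuring that the ``at least $q/2$ residues'' hypothesis genuinely forces some $C_{\widetilde j}\neq 0$, rather than merely being suggestive — this requires either an explicit identity for $C_{\widetilde j}$ in terms of generalized Bernoulli numbers $B_{1,\chi_{q^*}}$ (so that $C_{\widetilde j} = C_1 + (\widetilde j - 1)\cdot q$ or a comparable closed form, forcing at most one vanishing value) or a pigeonhole/linear-algebra argument over $\Q$. I expect the closed-form route, extending the $C_1$, $C_2$ computations at the end of the proof of Theorem~\ref{prop:positive_big_divisor}, to be the cleanest: if $C_{\widetilde j}$ depends affinely on $\widetilde j$ with nonzero slope $q$, then it vanishes for at most one residue, so $q/2 \ge 2$ distinct values among the $\widetilde j$ already suffice to locate a nonvanishing $A_j$, completing the contradiction.
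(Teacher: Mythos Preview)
Your overall framework (reduce to a derivative, expand in the independent set $\{\zeta_{4m}^k : 1\le k\le 2m,\ (k,2m)=1\}$, and derive a contradiction from all coefficients vanishing) matches the paper. The gap is in the endgame.

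When you factor $\left(\tfrac{-4mq}{a}\right)$ for $a=k+4mt$, the piece $\left(\tfrac{-4}{a}\right)=(-1)^{(a-1)/2}$ introduces a sign that flips when $a$ is replaced by $a+2m$ (since $m$ is odd). Consequently the coefficient that must vanish is not governed by a \emph{single} quantity $C_{\tilde j}=\sum_{t}\left(\tfrac{t+\tilde j}{q}\right)(t+\tilde j)$ as in Theorem~\ref{prop:positive_big_divisor}, but by a \emph{sum of two} Legendre-symbol terms:
\[
\sum_{t=0}^{q-1}\Bigl[\Bigl(\tfrac{k+4mt}{q}\Bigr)+\Bigl(\tfrac{k+2m+4mt}{q}\Bigr)\Bigr]t .
\]
Your proposed closed form ``$C_{\tilde j}$ is affine in $\tilde j$ with slope $q$'' is also false: $C_{\tilde j+1}-C_{\tilde j}=q\left(\tfrac{\tilde j}{q}\right)$ changes sign with $\tilde j$, so $C_{\tilde j}$ is $q$ times a partial character sum and can vanish for many $\tilde j$ (for $q\equiv 1\pmod 4$ it vanishes at $\tilde j=1$ and $\tilde j=(q+1)/2$ at least). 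So neither the single-$C_{\tilde j}$ reduction nor the affine/pigeonhole step goes through.

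The paper's key idea, which you are missing, is a \emph{parity} argument: in the displayed sum every term with both symbols nonzero is even, so modulo $2$ only the two values $t_{1,k}^{*},t_{2,k}^{*}$ (where $q\mid k+4mt$ or $q\mid k+2m+4mt$) survive, giving $t_{1,k}^{*}\equiv t_{2,k}^{*}\pmod 2$. Since $2(t_{1,k}^{*}-t_{2,k}^{*})\equiv 1\pmod q$, one gets $t_{1,k}^{*}-t_{2,k}^{*}\in\{(q+1)/2,\,(1-q)/2\}$, with the choice determined by whether $t_{1,k}^{*}>q/2$. The hypothesis that $S_m$ hits at least $q/2$ residues mod $q$ guarantees both cases occur, and the two resulting congruences mod $2$ are incompatible. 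That is where the ``$q/2$ residues'' condition is actually used.
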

\begin{proof}
Let $n = 4m$. We already know that $F_{\Delta}(\zeta_n)=0.$ We need to show that $F'_{\Delta}(\zeta_n) \neq 0$. We have 
\begin{align*}
\zeta_n F'_{\Delta}(\zeta_n)&= \sum_{a=1}^{\Delta-1}\left(\frac{\Delta}{a} \right) a \zeta_n^a 
=\sum_{k=1}^{n} \left[\sum_{a \equiv k \pmod{n}} \left(\frac{\Delta}{a} \right) a \right] \zeta_n^k 
= \sum_{k=1}^{n} A_k \zeta_n^k,
\end{align*} 
where  
\[ A_k= \sum_{a \equiv k \pmod{n}} \left(\frac{\Delta}{a} \right) a , \quad 1 \leq k \leq n. \] 
Note that $\zeta_{n}^k=-\zeta_{n}^{k+2m}$. Hence $\zeta_n F'_{\Delta}(\zeta_n)=\sum_{k=1}^{2m} (A_k-A_{k+2m}) \zeta_n^k.$ We know that $A_k=0$ for every $k$ with $(k,n)>1$.   Now suppose that $F'_\Delta(\zeta_n)=0$. By  \cref{lem:2.7}, the set $\{\zeta_n^k\mid (k,2m)=1,1\leq k\leq 2m\}$ is  $\Q$-linearly independent in $\Q(\zeta_n)$. Hence $A_k=A_{k+2m}$ for every $k$ with $(k,2m)=1$ and $1\leq k\leq 2m$. 

From our assumption, we have $d=mq$. We then have 

\begin{align*}
A_{k}-A_{k+2m} &= \sum_{a \equiv k \pmod{4m}} \left[\left(\frac{-4mq}{a} \right)a - \left(\frac{-4mq}{a+2m} \right)(a+2m) \right]  \\
&= \sum_{a \equiv k \pmod{4m}} \left[\left(\frac{-4mq}{a} \right)- \left(\frac{-4mq}{a+2m} \right) \right] a -2m \sum_{a \equiv k \pmod{4m}} \left(\frac{-4mq}{a+2m} \right).
\end{align*}
Let us write $a= k+ 4mt$ for $0 \leq t \leq q-1$. We have 
\begin{align*} 
\left(\frac{-4mq}{a} \right) &=\left(\frac{-4mq}{k+4mt} \right)=  \left(\frac{-qm}{k+4mt} \right) = (-1)^{\frac{k-1}{2}} \left(\frac{k+4mt}{qm} \right) \\
&= (-1)^{\frac{k-1}{2}} \left(\frac{k+4mt}{q} \right) \left(\frac{k+4mt}{m} \right)= (-1)^{\frac{k-1}{2}} \left(\frac{k+4mt}{q} \right) \left(\frac{k}{m} \right).
\end{align*} 

Similarly 
\begin{align*}
\left(\frac{-4mq}{a+2m} \right)&= \left(\frac{-4mq}{k+2m+4mt} \right)=(-1)^{\frac{k-1}{2}+m} \left(\frac{k+2m+4mt}{q} \right) \left(\frac{k+2m}{m} \right) \\
&=-(-1)^{\frac{k-1}{2}} \left(\frac{k}{m} \right) \left(\frac{k+2m+4mt}{q} \right).
\end{align*}
Because $\{k+2m+4mt \}_{t=0}^{q-1}$ is a complete residue system modulo $q$, we have 

\[ \sum_{a \equiv k \pmod{4m}} \left(\frac{-4mq}{a+2m} \right) =-(-1)^{\frac{k-1}{2}} \left(\frac{k}{m} \right)\sum_{0 \leq t \leq q-1} \left(\frac{k+2m+4mt}{q} \right) =0 .\]

Therefore, we conclude that 
\begin{align*}
A_{k}-A_{k+2m} &= (-1)^{\frac{k-1}{2}} \left(\frac{k}{m} \right) \sum_{0 \leq t \leq q-1} \left[ \left(\frac{k+4mt}{q} \right)+ \left(\frac{k+2m+4mt}{q} \right) \right] (k+4mt) \\
&= (-1)^{\frac{k-1}{2}} \left(\frac{k}{m} \right) 4m \sum_{0 \leq t \leq q-1} \left[ \left(\frac{k+4mt}{q} \right)+ \left(\frac{k+2m+4mt}{q} \right) \right]t.
\end{align*}
The second equality follows from the fact that 
\[ \sum_{0 \leq t \leq q-1} \left(\frac{k+4mt}{q} \right) = \sum_{0 \leq t \leq q-1} \left(\frac{k+2m+4mt}{q} \right) =0 .\] 

By our assumption, $A_{k}-A_{k+2m}=0$. Hence 
\begin{align*}
\sum_{0 \leq t \leq q-1} \left[ \left(\frac{k+4mt}{q} \right)+ \left(\frac{k+2m+4mt}{q} \right) \right]t =0.
\end{align*}
Note that all terms in the above sum are even, except the case where either $\left(\frac{k+4mt}{q} \right)$ or $\left(\frac{k+2m+4mt}{q} \right)$ is zero. Let $t_{1,k}^{*}$ (respectively $t_{2,k}^{*}$) be the unique integer in $\{0, 1, 2, \ldots, q-1 \}$ such that $q | 4mt_{1,k}^{*} +k$ (respectively $q| 4mt_{2,k}^{*}+2m+k$). Then 

\begin{align*}
0= \sum_{0 \leq t \leq q-1} \left[ \left(\frac{k+4mt}{q} \right)+ \left(\frac{k+2m+4mt}{q} \right) \right]t & \equiv t_{1,k}^{*}-t_{2,k}^{*} \pmod{2}.
\end{align*}
We remark that by our definition $q| 4m(t_{1,k}^{*} -t_{2,k}^{*})-2m .$ Equivalently $q| 2(t_{1,k}^*-t_{2,k}^{*})-1.$  We conclude that either $t_{1,k}^{*}-t_{2,k}^{*} = \frac{q+1}{2}$ or $t_{1,k}^{*}-t_{2,k}^{*}= \frac{1-q}{2}.$ Furthermore, the first case happens if $t_{1,k}^{*}>q/2$ and the second case happens otherwise. Under the assumption that $S_{m}$ contains at least $q/2$ distinct residues modulo $q$, we know that there exist $k_1, k_2 \in S_m$ such that $t_{1,k_1}^{*}>q/2$ and $t_{1,k_2}^{*}<q/2.$ In the first case 
\[ 0 \equiv t_{1,k_1}^{*}-t_{2, k_1}^{*} \equiv \frac{q+1}{2} \pmod{2} .\] 
In the second case 
\[ 0 \equiv t_{1,k_2}^{*}-t_{2, k_2}^{*} \equiv \frac{1-q}{2} =1+\frac{q+1}{2} \pmod{2} .\] 
This is a contradiction.  
\end{proof}
We provide below a concrete example where the condition on $S_m$ is satisfied. 
\begin{lem}
Let $m=p_1 \ldots p_r$. Assume that $\min \{p_1, p_2, \ldots, p_r \}>q$. Then $S_m$ contains at least $q/2$ residues modulo $q.$
\end{lem}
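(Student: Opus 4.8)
The plan is to show that the map $S_m \to \Z/q\Z$, $k \mapsto k \bmod q$, hits at least $q/2$ distinct residues by producing, for each target residue $c$, an element of $S_m$ reducing to $c$ mod $q$. Since $m = p_1 \cdots p_r$ with every $p_i > q$, the primes $q, p_1, \dots, p_r$ and $2$ are pairwise distinct, so by the Chinese Remainder Theorem the system $k \equiv c \pmod q$, $k \equiv 1 \pmod{p_i}$ for all $i$, $k \equiv 1 \pmod 2$ has a solution modulo $2mq$. Such a $k$ is automatically coprime to $2m$: it is odd and it is a unit modulo each $p_i$. The only remaining issue is the size constraint $1 \le k \le 2m$ built into the definition of $S_m$, since CRT only controls $k$ modulo $2mq$, not modulo $2m$.

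To deal with the range, first I would note that $S_m$ already has $\varphi(2m) = \varphi(m)$ elements (as $m$ is odd), each lying in $[1,2m]$ and coprime to $2m$, so the question is purely about how these known elements distribute modulo $q$. The cleanest route is a counting/pigeonhole argument: the full residue system $\{1, 3, 5, \dots, 2m-1\}$ of odd numbers in $[1,2m]$ maps onto all of $\Z/q\Z$ roughly evenly, and removing the odd multiples of the various $p_i$ (to pass to $S_m$) deletes a controlled proportion. Concretely, in any block of $q$ consecutive odd integers one finds all $q$ residue classes mod $q$ exactly once; writing $m = p_1 \cdots p_r$ and using inclusion–exclusion on the conditions $p_i \mid k$, the number of $k \in S_m$ landing in a prescribed residue class $c \bmod q$ is approximately $\frac{\varphi(m)}{q}$, with an error term bounded by the number of "boundary" blocks, which is $O(1)$ per prime. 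Since each $p_i > q$, these error terms are dominated and one shows at least $q/2$ residues are attained; in fact one expects nearly all of them.

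Alternatively — and this is probably the slickest version — I would argue directly: consider the $q$ integers $k_j = $ (the unique odd representative in $[1,2m]$ congruent to $1 \bmod m$ shifted appropriately). More precisely, fix any $k_0 \in S_m$; then $k_0, k_0 + m, k_0 + 2m, \dots$ would cycle through residues mod $q$ if we allowed values beyond $2m$, but only $k_0$ and possibly $k_0 + m$ (after parity/reduction adjustments) stay in range. So instead I would vary the CRT solution: for a fixed residue $c \bmod q$, among the $\varphi(m)$ units mod $2m$, the fraction reducing to $c$ mod $q$ equals (by CRT applied to the coprime moduli $2m$ and $q$, noting $\gcd(2m,q)=1$) exactly the fraction of units mod $2mq$ that reduce to $c$ mod $q$ and are $\equiv$ anything mod $2m$ — but this overcounts by the factor $q$. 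The honest statement is: the reduction map $(\Z/2mq\Z)^\times \to (\Z/q\Z)^\times \times (\Z/2m\Z)^\times$ is a bijection, hence every residue in $(\Z/q\Z)^\times$ — that's $q-1$ residues — is hit by some unit mod $2mq$ whose reduction mod $2m$ we may prescribe to be any given element of $S_m$; lifting that mod-$2m$ class back to its representative in $[1,2m]$ shows every one of the $q-1$ nonzero residues mod $q$ is attained by $S_m$, which is more than $q/2$.

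The main obstacle is precisely the tension between CRT (which lives modulo $2mq$) and the hard range restriction $k \le 2m$ in the definition of $S_m$: one cannot simply quote CRT and must instead track how the $\varphi(m)$ genuine elements of $S_m$ spread over the residues mod $q$. The counting argument above resolves this, but writing the inclusion–exclusion error bound carefully — and verifying it is beaten by the hypothesis $\min_i p_i > q$ — is where the real work lies. I would present the bijection-based version as the clean core and only fall back on explicit counting if a subtlety about $0 \bmod q$ (the non-unit residue) forces the weaker bound $q/2$ rather than $q-1$.
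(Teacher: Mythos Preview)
Your ``bijection-based version'' contains a genuine logical error. You correctly observe that $(\Z/2mq\Z)^\times \cong (\Z/q\Z)^\times \times (\Z/2m\Z)^\times$, so for any target residue $c \in (\Z/q\Z)^\times$ and any $k_0 \in S_m$ there is a unique class mod $2mq$ reducing to $c$ mod $q$ and to $k_0$ mod $2m$. But when you then ``lift that mod-$2m$ class back to its representative in $[1,2m]$,'' that representative is simply $k_0$ itself, whose residue mod $q$ is $k_0 \bmod q$, a fixed number having nothing to do with $c$. You have not produced an element of $S_m$ hitting $c$; you have only produced an element of $[1,2mq]$ that does so, and that element need not lie in $[1,2m]$. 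This is precisely the range obstruction you yourself flagged earlier, and the bijection does not circumvent it. The inclusion--exclusion counting route you sketch could in principle be made to work, but you do not carry it out, and the error control (of order $2^r$ against a main term $\varphi(m)/q$) is not obviously dominated by the hypothesis $\min_i p_i > q$ without further argument.

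The paper's proof is dramatically simpler and avoids all of this: it just observes that the odd integers $1,3,5,\ldots,q$ all lie in $S_m$ (each is odd and strictly smaller than every $p_i$, hence coprime to $2m$, and each is at most $q < 2m$), and these $(q+1)/2$ integers are pairwise distinct modulo $q$. No CRT, no counting, no error terms.
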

\begin{proof}
Let us consider the following set  
$ S = \{1, 3, \ldots, q \} = \{2k+1| 1 \leq 2k+1 \leq q \}.$
We can see that $|S|=\frac{q+1}{2}>q/2$. Furthermore, by our assumption $S \subset S_m$. Consequently, $|S_m|>q/2.$
\end{proof}

\section{Partial results on $r(\Phi_n)$ when $n \nmid \Delta.$} 
\label{sec:n_not_divisor}
\begin{lem}
Let $R$ be a (unital) ring and $I$ is an ideal of $R$. If $a^m-1$ and $a^n-1$ are in $I$, where $m$ and $n$ are positive integers, then $a^d-1$ is also in $I$, where $d=\gcd(m,n)$.
\end{lem}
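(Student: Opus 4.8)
The plan is to use the standard B\'ezout argument: since $d=\gcd(m,n)$, there exist integers $u,v$ with $um+vn=d$. The only subtlety is that $u$ and $v$ need not both be nonnegative, so I cannot simply multiply $a^{um}$ and $a^{vn}$ together inside $R$. I would instead work with a symmetric trick that avoids negative exponents entirely.

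First I would record the elementary identity, valid for any positive integers $i,j$,
\[
a^{i+j}-1 = a^i(a^j-1) + (a^i-1),
\]
which shows that if $a^i-1\in I$ and $a^j-1\in I$ then $a^{i+j}-1\in I$; by induction, $a^{km}-1\in I$ and $a^{\ell n}-1\in I$ for all positive integers $k,\ell$. Next I would use the companion identity
\[
a^{i+j}-1 = (a^i-1)+a^i(a^j-1),
\]
rearranged as $a^i(a^j-1) = (a^{i+j}-1)-(a^i-1)$, to propagate membership ``downward'': if $a^i-1\in I$ and $a^{i+j}-1\in I$ then $a^i(a^j-1)\in I$. This is the engine that lets a Euclidean-algorithm style descent work without ever inverting $a$.

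Then I would run the Euclidean algorithm on the pair $(m,n)$, maintaining the invariant that at each stage we have two positive integers $r,s$ with $a^r-1\in I$ and $a^s-1\in I$ and with $\gcd(r,s)=d$. Given $r>s$, write $r = qs + t$ with $0\le t < s$; from $a^r-1\in I$ and (by the upward step) $a^{qs}-1\in I$, the downward identity $a^{qs}(a^t-1) = (a^r-1)-(a^{qs}-1)$ gives $a^{qs}(a^t-1)\in I$. If $t>0$ one would like to conclude $a^t-1\in I$; but $a^{qs}$ need not be a unit, so instead I would observe $a^{qs}(a^t-1) = (a^{qs+t}-1)-(a^{qs}-1) = (a^r-1)-(a^{qs}-1)\in I$ and also that $a^{qs}(a^t-1)+(a^t-1)\cdot$ nothing helps directly — so the cleaner route is to keep $(s,t)$ as the new pair only after noting $a^{s}-1\in I$ and the combination above forces, via one more application of the upward identity $a^{t}-1 \equiv a^{qs}(a^t-1)\cdot(1+a^s+\cdots)$... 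Let me instead just phrase the induction purely in terms of the subgroup $H=\{k\in\Z_{>0} : a^k-1\in I\}\cup\{0\}$ of $(\Z,+)$: the two identities above show $H$ is closed under addition and under ``subtraction when the difference is positive'', i.e. $H$ is the set of nonnegative multiples of its smallest positive element $e$. Since $m,n\in H$ we get $e\mid m$ and $e\mid n$, hence $e\mid d$; and since $d$ is an integer combination $um+vn$ with, say, $u>0>v$, we have $d = um + v n$ rewritten as $um = d + (-v)n$ with all of $d, (-v)n, um$ positive, so $d = um-(-v)n\in H$ by the subtraction-when-positive property. Therefore $a^d-1\in I$.

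The main obstacle is precisely the one flagged above: the ring $R$ is not assumed commutative-with-$a$-invertible, so the naive ``$a^{um+vn}=a^{um}a^{vn}$'' fails when $u$ or $v$ is negative, and one must replace division by the two polynomial identities for $a^{i+j}-1$ and organize them into a clean closure statement about the set $H$ of exponents. Once $H$ is seen to be closed under addition and under positive differences, it is automatically $e\Z_{\ge 0}$ for $e=\min H$, and the B\'ezout relation finishes the argument with no further computation. I would present the proof in exactly this order: the two identities, the definition of $H$, the structure of $H$, and the one-line B\'ezout conclusion.
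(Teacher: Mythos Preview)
Your plan has the right shape --- run the Euclidean algorithm on the exponents --- but there is a real gap at the step you yourself flag. You correctly note that from $a^r-1\in I$ and $a^{qs}-1\in I$ the identity $a^{qs}(a^t-1)=(a^r-1)-(a^{qs}-1)$ only gives $a^{qs}(a^t-1)\in I$, and that $a^{qs}$ need not be a unit. You then retreat to the set $H=\{k>0:a^k-1\in I\}\cup\{0\}$ and assert that ``the two identities above show $H$ is closed under \ldots\ subtraction when the difference is positive.'' But neither identity you wrote down proves that: both give only $a^i(a^j-1)\in I$, not $a^j-1\in I$. The closure under positive differences is precisely the point at which you got stuck two sentences earlier, and it is never established.

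The fix is a one-symbol change: factor out the \emph{remainder} rather than the large power. With $r=qs+t$ write
\[
a^r-1 \;=\; a^t\bigl(a^{qs}-1\bigr) + \bigl(a^t-1\bigr),
\]
so that $a^t-1=(a^r-1)-a^t(a^{qs}-1)\in I$ directly. This is exactly what the paper does (in its notation, $a^m-1=a^r(a^{nq}-1)+(a^r-1)$, hence $a^r-1\in I$), and once you have it the whole $H$-as-semigroup repackaging and B\'ezout finish become unnecessary: just iterate the Euclidean step until the remainder is $d$.
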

\begin{proof}
We may suppose $m\geq n$. Write $m=nq+r$, where $0\leq r<b$. We have $a^m-1=a^r(a^{nq}-1)+a^r-1$. This implies that $a^r-1$ is also in $I$. By repeating this argument, we conclude that $a^d-1$ is in $I$.
\end{proof}
  
\begin{prop} Let $D=|\Delta|=p$ be a prime number and $m$ an positive integer. If  $\gcd(m,p-1)<\phi(m)$, then  $F_\Delta(\zeta_m)\not=0$.
\end{prop}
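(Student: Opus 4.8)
The plan is to reduce everything modulo $2$, where $F_\Delta$ collapses to a completely explicit shape. First I would note that, because the conductor of $\chi_\Delta$ equals $p$, one has $\chi_\Delta(a)=\left(\frac{\Delta}{a}\right)=\pm1$ for \emph{every} $a$ with $1\le a\le p-1$; hence, reducing coefficients mod $2$,
\[
F_\Delta(x)\equiv\sum_{a=1}^{p-1}x^a=x\cdot\frac{x^{p-1}-1}{x-1}\pmod 2 .
\]
If $F_\Delta(\zeta_m)=0$, then $\Phi_m(x)$, being the monic minimal polynomial of $\zeta_m$ over $\Q$, divides $F_\Delta(x)$ in $\Z[x]$ by Gauss's lemma, hence a fortiori in $\F_2[x]$. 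So it suffices to prove the contrapositive: if $\gcd(m,p-1)<\phi(m)$, then $\Phi_m(x)\nmid F_\Delta(x)$ in $\F_2[x]$. (Here $m\ge2$, since $\gcd(1,p-1)=1=\phi(1)$; and $p$ is odd, since an even fundamental discriminant is divisible by $4$.)

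The core is then a multiplicity count in $\F_2[x]$. I would write $p-1=2^cu$ and $m=2^sm'$ with $u,m'$ odd and $c\ge1$. Over $\F_2$ the Frobenius gives $x^{p-1}-1=(x^u-1)^{2^c}=\prod_{d\mid u}\Phi_d(x)^{2^c}$, and the $\Phi_d$ ($d\mid u$) are pairwise coprime and squarefree because $x^u-1$ is separable mod $2$; thus
\[
F_\Delta(x)\equiv x\,(x+1)^{2^c-1}\!\!\prod_{1<d\mid u}\!\!\Phi_d(x)^{2^c}\pmod 2 .
\]
On the other side, iterating Proposition~\ref{prop:cyclotomic} gives $\Phi_m(x)\equiv\Phi_{m'}(x)^{2^{s-1}}\pmod 2$ for $s\ge1$ (and $\Phi_m\equiv\Phi_{m'}$ for $s=0$); moreover each irreducible factor of $\Phi_{m'}$ over $\F_2$ is the minimal polynomial of a primitive $m'$-th root of unity in $\overline{\F}_2$ (legitimate since $m'$ is odd), and for $m'>1$ it is coprime both to $x$ and to $x+1$, because $\Phi_{m'}(0)=1$ and $\Phi_{m'}(1)$ is odd.

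Assuming now $\Phi_m\mid F_\Delta$ in $\F_2[x]$, I would pick an irreducible factor $\pi$ of $\Phi_{m'}$. Comparing the orders of roots, $\pi$ can only divide the block $\Phi_{m'}^{2^c}$ on the right-hand side, which forces $m'\mid u$ and then $2^{s-1}=v_\pi(\Phi_m)\le v_\pi(F_\Delta)=2^c$, i.e. $s\le c+1$; the boundary case $m'=1$ is handled identically with the factor $(x+1)^{2^c-1}$ and gives $s\le c$, and the case $s=0$ gives directly $m=m'\mid u$. In every case $\min(s,c)\ge\max(s-1,0)$ and $\gcd(m',u)=m'$, whence, using $\phi(m)=2^{\max(s-1,0)}\phi(m')$,
\[
\gcd(m,p-1)=2^{\min(s,c)}\gcd(m',u)\ \ge\ 2^{\max(s-1,0)}m'\ \ge\ 2^{\max(s-1,0)}\phi(m')=\phi(m),
\]
contradicting the hypothesis; hence $F_\Delta(\zeta_m)\ne0$. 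The conceptual move is very short, and I expect the only real obstacle to be the careful bookkeeping of $2$-adic valuations of the exponents on the two sides when $m$ is even, so that $\Phi_m$ degenerates into a perfect power, together with keeping the degenerate cases $m=1$ and $m'=1$ straight and checking at the outset that $\chi_\Delta$ genuinely has no zero on $\{1,\dots,p-1\}$, so that the mod-$2$ reduction is exactly $\sum_{a=1}^{p-1}x^a$.
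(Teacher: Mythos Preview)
Your proof is correct, but it takes a genuinely different route from the paper after the common first step (the mod~$2$ reduction $F_\Delta(x)\equiv x(x^{p-1}-1)/(x-1)$).

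The paper works in the ring $\Z[\zeta_m]$: from $F_\Delta(\zeta_m)=0$ it multiplies by $\zeta_m-1$ to get $\zeta_m^{p-1}-1\in 2\Z[\zeta_m]$, combines this with $\zeta_m^m-1=0$ via a one-line Euclidean lemma (if $a^m-1,a^n-1\in I$ then $a^{\gcd(m,n)}-1\in I$) to obtain $\zeta_m^d-1\in 2\Z[\zeta_m]$ with $d=\gcd(m,p-1)$, and then reads off a contradiction from the integral power basis $\{1,\zeta_m,\dots,\zeta_m^{\phi(m)-1}\}$ since $1\le d<\phi(m)$. No factorization is needed.

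You instead stay in $\F_2[x]$: you factor $F_\Delta$ completely as $x(x+1)^{2^c-1}\prod_{1<d\mid u}\Phi_d^{2^c}$ and reduce $\Phi_m$ to $\Phi_{m'}^{2^{s-1}}$, then compare $\pi$-adic multiplicities for an irreducible factor $\pi$ of $\Phi_{m'}$ to force $m'\mid u$ and the inequality $\min(s,c)\ge\max(s-1,0)$, whence $\gcd(m,p-1)\ge\phi(m)$. The case analysis ($m'>1$ versus $m'=1$, $s=0$ versus $s\ge1$) is handled correctly.

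What each buys: the paper's argument is shorter and avoids all the $2$-adic bookkeeping, trading it for the single ring-theoretic observation about the power basis. Your argument is more explicit about what happens over $\F_2$ and makes transparent exactly which $\Phi_m$ can divide $F_\Delta$ there; this explicitness could be useful if one wanted sharper information or to push the method to composite conductors, at the cost of more case-checking.
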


\begin{proof} One has
$F_\Delta (x) \equiv x+x^2+\cdots+x^{p-1}=x\frac{x^{p-1}-1}{x-1}\pmod 2.
$
Suppose that $F_\Delta(\zeta_m)=0$. Then one has 
$
\zeta_m^{p-1}-1 \in I,
$
where $I=2\Z[\zeta_m]$. 
Clearly, $\zeta_m^m-1=0\in I$. By previous lemma, $\zeta_m^d-1\in I$, where $d=\gcd(m,p-1)<\phi(m)$. Therefore
\[
\zeta_m^d=1+ 2(a_0+a_1\zeta_m+\cdots+a_{\varphi(m)}\zeta_m^{\varphi(m)-1}).
\]
Since $\{\zeta_m^k\mid k=0,1,\ldots,\varphi(m)-1\}$ is a $\Q$-basis of $\Q(\zeta_m)$, this implies that $1=2a_d$, a contradiction. 
\end{proof}

\begin{prop} Let $D=|\Delta|=p$ be a prime number and $m$ an positive integer. If  $\dfrac{m}{\gcd(m,p-1)}$ is an odd integer greater than 1, then  $F_\Delta(\zeta_m)\not=0$. In particular, if $m$ is odd and $m$ does not divide $p-1$ then $F_\Delta(\zeta_m)\not=0$
\end{prop}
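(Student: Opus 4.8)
The plan is to mimic the mod-$2$ argument of the previous proposition but work harder to extract a contradiction. Recall that modulo $2$ one has
\[
F_\Delta(x)\equiv x\,\frac{x^{p-1}-1}{x-1}\pmod 2 .
\]
So if $F_\Delta(\zeta_m)=0$, then in the ring $R=\Z[\zeta_m]/2\Z[\zeta_m]$ we have $\zeta_m^{p-1}=1$, and of course $\zeta_m^m=1$. By the gcd lemma just proved, $\zeta_m^{d}=1$ in $R$, where $d=\gcd(m,p-1)$. Writing $m=d\cdot k$ with $k=m/\gcd(m,p-1)$ odd and $>1$, I would like to derive a contradiction from the fact that $\zeta_m^{d}-1\in 2\Z[\zeta_m]$ together with $k>1$ odd.

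The key step: I would use the relation $\zeta_m^d\equiv 1\pmod{2\Z[\zeta_m]}$ to reduce a $\Q$-basis computation. Concretely, $\zeta_m^d$ is a primitive $k$-th root of unity $\zeta_k$ (after identifying $\zeta_m^d=\zeta_k$), and the congruence $\zeta_k\equiv 1\pmod{2\Z[\zeta_k]}$ would force $\Phi_k(1)\equiv 0$ in $\Z/2$ only in the cases where $k$ is a power of $2$ — but here $k$ is odd and $>1$, so $\Phi_k(1)$ is either $\ell$ (if $k=\ell^s$ is a prime power) or $1$ (if $k$ has at least two distinct prime factors); in the odd case this value is always odd, hence $\zeta_k-1$ is a unit times a non-unit argument. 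More carefully: the ideal $(1-\zeta_k)$ in $\Z[\zeta_k]$ has norm $\Phi_k(1)$, which is odd when $k$ is odd and $>1$, so $1-\zeta_k$ is a unit modulo every prime above $2$, in particular $1-\zeta_k\notin 2\Z[\zeta_k]$. Pulling this back along $\Z[\zeta_k]\subseteq\Z[\zeta_m]$ (which is flat / the prime $2$ is unramified in the relevant way, or simply: $2\Z[\zeta_m]\cap\Z[\zeta_k]=2\Z[\zeta_k]$ since $\Z[\zeta_m]$ is free over $\Z[\zeta_k]$), we get $1-\zeta_m^d=1-\zeta_k\notin 2\Z[\zeta_m]$, contradicting $\zeta_m^d-1\in 2\Z[\zeta_m]$.

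The cleanest way to package this without invoking ramification theory is via the $\Q$-basis argument already used: write $\zeta_m^d = 1 + 2\beta$ with $\beta\in\Z[\zeta_m]$, so $\zeta_k=1+2\beta$ lies in the subring $\Z[\zeta_k]$ and we can run the argument entirely inside $\Z[\zeta_k]$ with $\beta\in\Z[\zeta_k]$; then express $\beta$ in the power basis $\{1,\zeta_k,\dots,\zeta_k^{\varphi(k)-1}\}$ and compute $\Phi_k(1)\bmod 2$ by evaluating the relation $\sum_{j} c_j \zeta_k^j$-type identity — equivalently, apply the ring homomorphism $\Z[\zeta_k]\to\Z[\zeta_k]/(1-\zeta_k)\cong\Z/\Phi_k(1)$ sending $\zeta_k\mapsto 1$ to the equation $\zeta_k=1+2\beta$, obtaining $0\equiv 2\bar\beta$ in $\Z/\Phi_k(1)$, i.e. $\Phi_k(1)\mid 2\bar\beta$; but I actually want the sharper statement that $1-\zeta_k$ is coprime to $2$, which follows from $\gcd(\Phi_k(1),2)=1$ for $k$ odd $>1$. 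The last sentence of the proposition ("if $m$ is odd and $m\nmid p-1$ then $F_\Delta(\zeta_m)\neq0$") is then immediate: if $m$ is odd and does not divide $p-1$, then $k=m/\gcd(m,p-1)$ is an odd integer $>1$.

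The main obstacle I anticipate is the descent step: justifying that $\zeta_m^d-1\in 2\Z[\zeta_m]$ implies $\zeta_k-1\in 2\Z[\zeta_k]$ (not merely in $2\Z[\zeta_m]$), and then that the latter is impossible. Both follow from $\Z[\zeta_m]$ being free as a $\Z[\zeta_k]$-module — so that $2\Z[\zeta_m]\cap\Z[\zeta_k]=2\Z[\zeta_k]$ — combined with the norm computation $N_{\Q(\zeta_k)/\Q}(1-\zeta_k)=\Phi_k(1)$ being odd for odd $k>1$. Alternatively, and perhaps more in the spirit of the paper's elementary tone, one avoids subrings entirely: from $\zeta_m^d-1\in 2\Z[\zeta_m]$ and the basis $\{\zeta_m^i\}_{i=0}^{\varphi(m)-1}$ one writes $\zeta_m^d = 1 + 2\sum a_i\zeta_m^i$; if $d\ne 0$ this directly contradicts linear independence exactly as in the previous proof \emph{provided} $1\le d\le \varphi(m)-1$ and $0$ is among the exponents — and the only way to fail is $d\ge\varphi(m)$ or $d=0$, i.e. essentially $k=1$; handling the boundary cases $d\ge \varphi(m)$ requires reducing $\zeta_m^d$ using $\zeta_m^m=1$ and the minimal polynomial $\Phi_m$, which is where the hypothesis "$k$ odd $>1$" must be used to avoid collapse. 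I would write up the norm/descent version as the main argument since it is conceptually transparent, and remark on the elementary version.
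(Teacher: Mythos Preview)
Your norm argument is correct: once $\zeta_m^d-1\in 2\Z[\zeta_m]$ with $\zeta_m^d$ a primitive $k$-th root of unity ($k=m/\gcd(m,p-1)$ odd and $>1$), taking the norm from $\Q(\zeta_m)$ to $\Q$ gives $2^{\varphi(m)}\mid \Phi_k(1)^{\varphi(m)/\varphi(k)}$, which is impossible since $\Phi_k(1)$ is odd for odd $k>1$. (This bypasses the descent worry you raised; you never need $2\Z[\zeta_m]\cap\Z[\zeta_k]=2\Z[\zeta_k]$.)

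The paper takes a different, more elementary route that avoids norms entirely. From $\zeta_m^d=1+2a$ with $a\neq 0$ in $\Z[\zeta_m]$, it raises both sides to the power $r=k$ and expands binomially:
\[
1=(1+2a)^r=1+2ra+\sum_{j\ge 2}\binom{r}{j}(2a)^j .
\]
Cancelling and dividing by the nonzero element $2a$ in the domain $\Z[\zeta_m]$ yields $r\in 2\Z[\zeta_m]$; since $r\in\Z$ and $\{1,\zeta_m,\dots,\zeta_m^{\varphi(m)-1}\}$ is a $\Z$-basis, this forces $r$ even, contradicting the hypothesis that $r$ is odd. Your approach explains \emph{why} oddness of $k$ matters (the prime $2$ does not divide $\Phi_k(1)$), while the paper's binomial trick is shorter and stays entirely within elementary ring manipulations, with no appeal to norms, freeness of $\Z[\zeta_m]$ over $\Z[\zeta_k]$, or ramification.
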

\begin{proof} Let $d=\gcd(m,p-1)$ and let $r=m/d$. Suppose that $F_\Delta(\zeta_m)=0$.
As in the previous proof, $\zeta_m^d-1\in I:=2\Z[\zeta_m]$. Write $\zeta_m^d=1+2a$, for some $a\in \Z[\zeta_m]$. Note that $a\not=0
$ because $r>1$. 
One has 
$1=(\zeta_m^d)^r=1+\sum\limits_{k=1}^r\binom{r}{k}(2a)^k.$ 
Hence
$ r=\sum\limits_{k=2}^{r-1}\binom{r}{k}(2a)^{k-1}\in I.$
Since $r$ is odd, this implies that $1$ is in $I$, a contradiction.
\end{proof}

\section{Generalized Fekete polynomials with $\Delta=4p$, $p \equiv 3 \pmod{4}$} 
\label{sec:4p}

Before we discuss the proofs for our results in this and the following sections, we would like to explain our general strategy for computation of $r_{\Delta}(\Phi_n)$ in the case $n | \Delta.$ First, we know that by the theory of Gauss sums (see the discussion after \cref{def:gauss_sums}), if $n|\Delta$ and $ n \neq |\Delta|$ then $r_{\Delta}(\Phi_n) \geq 1$. By definition, $r_{\Delta}(\Phi_n)$ is equal to the positive integer $k$ such that $ F_{\Delta}(\zeta_n) = F'_{\Delta}(\zeta_n) = \ldots = F_{\Delta}^{(k-1)}(\zeta_n) =0$ and $F_{\Delta}^{(k)}(\zeta_n) \neq 0.$ It turns out that the values $F_{\Delta}^{i}(\zeta_n)$ for $1 \leq i \leq k$ can often be expressed in terms of Bernoulli numbers and hence special values of $L$-functions. Using this relationship, we obtain information about the vanishing and non-vanishing of $F_{\Delta}^{(i)}(\zeta_n)$. Schematically, our strategy can be summarized by the following diagram 

\[ \xymatrix
   { & \text{Higher derivatives of $F_{\Delta}(x)$} \ar[dl]  & \\
     \text{Bernoulli numbers}  \ar[r] &  \text{Special values of L-functions} \ar[u] 
   }
\] 

This strategy will be applied repeatedly in Sections $7-10.$  We will explain in detail our prototype calculations in Section 7. For the other sections, we will keep the essential parts of our proofs and omit minor details in order to avoid repetition.

\subsection{Cyclotomic factors of $\Fe(x)$}

By Corollary~\ref{cor:Even_Multiplicity_Phi1_Phi2} in Section~\ref{sec:multiplicy_Phi1_Phi2}, we know that 
\[ r_{\Delta}(\Phi_1)= r_{\Delta}(\Phi_2)= \tilde{r}_{\Delta}(\Phi_1) = 2 .\] 
We next determine the multiplicity of $\Phi_p(x), \Phi_{2p}(x)$ in $F_{\Delta}(x).$ We first recall the definition of $\mFe(x)$
\[ \mFe(x) = \sum_{a=0}^{2p-1} \left(\frac{4p}{2a+1} \right) x^a .\] 
By the reciprocity law, we have 
\[ \left(\frac{4p}{2a+1} \right)= \left(\frac{p}{2a+1} \right)= \left(\frac{2a+1}{p} \right) (-1)^{\frac{a(p-1)}{2}}= (-1)^{a} \left(\frac{2a+1}{p} \right)  .\]
Note also that 
$ \left(\dfrac{2a+1}{p} \right)= \left(\dfrac{2}{p} \right) \left(\dfrac{a+\frac{p+1}{2}}{p} \right).$ 
Combining these facts, we have 
\begin{align*}
\mFe(x) &= \left(\frac{2}{p} \right) \sum_{a=0}^{2p-1} \left(\frac{a+\frac{p+1}{2}}{p} \right)(-1)^a x^a 
        = \left(\frac{2}{p} \right) \sum_{a=0}^{p-1} \left(\frac{a+\frac{p+1}{2}}{p} \right)[(-1)^a x^a -(-1)^a x^{a+p}].
\end{align*}

\begin{prop}
$   r_{\Delta}(\Phi_p)=r_{\Delta}(\Phi_{2p})= \tilde{r}_{\Delta}(\Phi_p) =1.$ 
\end{prop}

\begin{proof}
We have 
$ 
\mFe'(x)=\left(\dfrac{2}{p} \right) \sum\limits_{a=1}^{p-1} \left(\frac{a+\frac{p+1}{2}}{p} \right)[(-1)^a a x^{a-1} -(-1)^a (a+p) x^{a+p-1}].
$ 
Hence
\[
\begin{aligned}
\mFe'(\zeta_p)&=\left(\frac{2}{p} \right) \sum_{a=1}^{p-1} \left(\frac{a+\frac{p+1}{2}}{p} \right)[(-1)^a a \zeta_p^{a-1} -(-1)^a (a+p) \zeta_p^{a+p-1}]\\
&=-\left(\frac{2}{p} \right)p \sum_{a=1}^{p-1} \left(\frac{a+\frac{p+1}{2}}{p} \right)(-1)^a  \zeta_p^{a-1}.
\end{aligned}
\]
By Lemma \ref{lem:cyclotomic_basis_square_free} we know that $\{\zeta_p, \zeta_p^2, \ldots, \zeta_p^{p-1} \}$ is $\Q$-linearly independent. Therefore, we can conclude that $\mFe'(\zeta_p) \neq 0$ and the above statement holds. 
\end{proof}

For the multiplicity of $\Phi_4(x)$, we have the following.

\begin{prop}
$  r_{\Delta}(\Phi_4)= \tilde{r}_{\Delta}(\Phi_2) = \begin{cases} 3 &\mbox{if } p \equiv 7 \pmod{8} \\
1 & \mbox{if } p \equiv 3 \pmod{8}. \end{cases} $ 
\end{prop}
\begin{proof}
We have 
$ \mFe(-1) = 2 \left(\dfrac{2}{p} \right) \sum\limits_{a=0}^{p-1} \left(\dfrac{a+\frac{p+1}{2}}{p} \right)=0.$
We also have 
{\allowdisplaybreaks
\begin{align*}
\left(\frac{2}{p} \right) \mFe'(-1) &= \sum_{a=0}^{p-1}     \left(\frac{a+\frac{p+1}{2}}{p} \right) (-a -(a+p)) 
= -2 \sum_{a=0}^{p-1}\left(\frac{a+\frac{p+1}{2}}{p} \right)a -p \sum_{a=0}^{p-1} \left(\frac{a+\frac{p+1}{2}}{p} \right)\\ 
&= -2 \sum_{a=0}^{p-1}\left(\frac{a+\frac{p+1}{2}}{p} \right)a 
= - 2 \sum_{a=0}^{\frac{p-1}{2}} \left(\frac{a+\frac{p+1}{2}}{p} \right)a - 2 \sum_{a=\frac{p+1}{2}}^{p-1} \left(\frac{a+\frac{p+1}{2}}{p} \right)a \\
&=- 2 \sum_{u=\frac{p+1}{2}}^{p} \left(\frac{u}{p} \right)(u-\frac{p+1}{2}) - 2 \sum_{u=1}^{\frac{p-1}{2}} \left(\frac{u}{p} \right)(u+\frac{p-1}{2}) \\
&= - 2 \sum_{u=1}^{p-1} \left(\frac{u}{p} \right)u + (p+1) \sum_{u=\frac{p+1}{2}}^p \left(\frac{u}{p} \right) -(p-1) \sum_{u=1}^{\frac{p-1}{2}} \left(\frac{u}{p} \right)\\
&= - 2 \sum_{u=1}^{p-1} \left(\frac{u}{p} \right)u  -2p \sum_{u=1}^{\frac{p-1}{2}} \left(\frac{u}{p} \right)
 =2ph(-p)-2p\left(2-\left(\dfrac{2}{p}\right)\right)h(-p)\\
 &=\begin{cases} 0 &\text{ if } p\equiv 7\pmod 8\\
 -4ph(-p)  &\text{ if } p\equiv 3\pmod 8.
 \end{cases}
 \end{align*}
 }
For the second to the last equality we use the fact that for $p \equiv 3 \pmod{4}$ ), one has
\vspace{-.1cm}
\begingroup
\addtolength{\jot}{0em}
\begin{align*}
\sum_{u=1}^{p-1} \left(\frac{u}{p}u \right)&=-ph(-p) & (\text{see \cite[Equation 3]{[Girstmair]}})\\
\sum_{u=1}^{\frac{p-1}{2}} \left(\frac{u}{p} \right)&= \left(2-\left(\dfrac{2}{p}\right)\right)h(-p)&(\text{see \cite[Corollary 3.4]{[Berndt]}}).
\end{align*} 
\endgroup
Now we suppose that $p\equiv 7\pmod 8$. We have 
{\allowdisplaybreaks
\begin{align*}
\left(\frac{2}{p} \right) \mFe''(-1) &= 2 \sum_{a=0}^{p-1} \left(\frac{a+\frac{p+1}{2}}{p} \right) \left(a(a-1)+(a+p)(a+p-1)\right)\\
&= 2 \sum_{a=0}^{p-1} \left(\frac{a+\frac{p+1}{2}}{p} \right) (a^2+(a+p)^2) \\
&= 4 \sum_{a=0}^{p-1} \left(\frac{a+\frac{p+1}{2}}{p} \right)a^2 + 4p \sum_{a=0}^{p-1}  \left(\frac{a+\frac{p+1}{2}}{p} \right)a+ 2p^2 \sum_{a=0}^{p-1} \left(\frac{a+\frac{p+1}{2}}{p} \right) \\ 
&= 4 \sum_{a=0}^{p-1} \left(\frac{a+\frac{p+1}{2}}{p} \right)a^2
= 4\sum_{u=\frac{p+1}{2}}^{p} \left(\frac{u}{p}\right)(u-\frac{p+1}{2})^2 + 4 \sum_{u=1}^{\frac{p-1}{2}} \left(\frac{u}{p} \right)(u+\frac{p-1}{2})^2\\
&=4 \sum_{u=0}^{p-1} \left(\frac{u}{p} \right)u^2-4(p+1)\sum_{u=\frac{p+1}{2}}^{p}\left(\frac{u}{p} \right)u+4(p-1)\sum_{u=1}^{\frac{p-1}{2}}\left(\frac{u}{p} \right)u\\
&+(p+1)^2\sum_{u=\frac{p+1}{2}}^{p}\left(\frac{u}{p} \right)+(p-1)^2\sum_{u=1}^{\frac{p-1}{2}}\left(\frac{u}{p} \right)\\
&=4 \sum_{u=0}^{p-1} \left(\frac{u}{p} \right)u^2+4p(p+1)h(-p) -4ph(-p)\\
&=4 \sum_{u=0}^{p-1} \left(\frac{u}{p} \right)u^2+4p^2h(-p)
=0.
\end{align*} 
}
Here, we use (for $p\equiv 7\pmod 8$):
{\allowdisplaybreaks
\[
\begin{aligned}
\sum_{u=1}^{\frac{p-1}{2}}\left(\frac{u}{p} \right)u&=0\quad (\text{\cite[Corollary 13.4]{[Berndt]}}),
\sum_{u=1}^{p-1}\left(\frac{u}{p} \right)u=-ph(-p) \quad (\text{see \cite[Equation 3]{[Girstmair]}}),\\
\sum_{u=1}^{\frac{p-1}{2}}\left(\frac{u}{p} \right)&=h(-p) \quad (\text{\cite[Corollary 3.4]{[Berndt]}}),\\
\sum_{u=0}^{p-1} \left(\frac{u}{p} \right)u^2&=\frac{-\sqrt{p}p^2}{\pi}L(1,\chi_p)=-p^2h(-p) \; (\text{\cite[Theorem 13.15]{[Berndt]}}),
\end{aligned} .\]
}
Now we consider  $\mFe'''(-1)$. We have
{\allowdisplaybreaks
\begin{align*}
&\left(\frac{2}{p} \right) \mFe'''(-1) = 2 \sum_{a=0}^{p-1} \left(\frac{a+\frac{p+1}{2}}{p} \right) \left(-a(a-1)(a-2)-(a+p)(a+p-1)(a+p-2)\right)\\
&=- 2 \sum_{a=0}^{p-1} \left(\frac{a+\frac{p+1}{2}}{p} \right)(a^3+(a+p)^3)
= -4 \sum_{a=0}^{p-1} \left(\frac{a+\frac{p+1}{2}}{p} \right)a^3\\
&= -4\sum_{u=\frac{p+1}{2}}^{p} \left(\frac{u}{p}\right)(u-\frac{p+1}{2})^3 - 4 \sum_{u=1}^{\frac{p-1}{2}} \left(\frac{u}{p} \right)(u+\frac{p-1}{2})^3\\
&=-4 \sum_{u=0}^{p-1} \left(\frac{u}{p} \right)u^3+6(p+1)\sum_{u=\frac{p+1}{2}}^{p}\left(\frac{u}{p} \right)u^2-3(p+1)^2\sum_{u=\frac{p+1}{2}}^{p}\left(\frac{u}{p} \right)u+ \frac{(p+1)^3}{2}\sum_{u=\frac{p+1}{2}}^{p} \left(\frac{u}{p} \right)\\
&-6(p-1)\sum_{u=1}^{\frac{p-1}{2}}\left(\frac{u}{p} \right)u^2-3(p-1)^2\sum_{u=1}^{\frac{p-1}{2}}\left(\frac{u}{p} \right)u- \frac{(p-1)^3}{2}\sum_{u=1}^{\frac{p-1}{2}}\left(\frac{u}{p} \right).
\end{align*}
}

We have (for $p\equiv 7\pmod 8$)
\[
\begin{aligned}
L(3,\chi_p)&=\dfrac{i\sqrt{p}}{2i} \left(\dfrac{2\pi}{p} \right)^3 \dfrac{L(-2,\chi_p)}{\Gamma(3)\cos(\pi)} \quad\text{(see \cite[page 5]{[Iwasawa]})}\\
&=\dfrac{-2\sqrt{p}\pi^3}{p^3}L(-2,\chi_p)
=\dfrac{2\sqrt{p}\pi^3}{3p^3}B_{3,\chi} \quad\text{(see \cite[Theorem 1]{[Iwasawa]})}.
\end{aligned}
\]
Hence, by \cite[Theorem 13.15]{[Berndt]},
$
\sum\limits_{u=1}^{\frac{p-1}{2}} \left(\dfrac{u}{p}\right)u^2=\dfrac{-7\sqrt{p}p^2}{8\pi^3}
L(3,\chi_p)=-\dfrac{7}{12}B_{3,\chi},
$
and
$
\sum\limits_{u=\frac{p+1}{2}}^{p} \left(\dfrac{u}{p}\right)u^2=-p^2h(-p)+\dfrac{7}{12}B_{3,\chi}.
$
Thus 
\[
6(p+1)\sum_{u=\frac{p+1}{2}}^{p}\left(\frac{u}{p} \right)u^2-6(p-1)\sum_{u=1}^{\frac{p-1}{2}}\left(\frac{u}{p} \right)u^2=-6(p+1)p^2h(-p)+7pB_{3,\chi}. 
\]
One  also has
\[
\begin{aligned}
&-3(p+1)^2\sum_{u=\frac{p+1}{2}}^{p}\left(\frac{u}{p} \right)u+ \frac{(p+1)^3}{2}\sum_{u=\frac{p+1}{2}}^{p} \left(\frac{u}{p} \right)-3(p-1)^2\sum_{u=1}^{\frac{p-1}{2}}\left(\frac{u}{p} \right)u- \frac{(p-1)^3}{2}\sum_{u=1}^{\frac{p-1}{2}}\left(\frac{u}{p} \right)\\
&=3(p+1)^2ph(-p)-(p^3+3p)h(-p)=(2p^3+6p^2)h(-p).
\end{aligned}
\]
On the other hand, by \cite[page 10]{[Iwasawa]}
\[
\begin{aligned}
B_{3,\chi_p}&=p^2\sum_{u=1}^{p-1}\left(\dfrac{a}{p}\right)B_3(\frac{a}{p}-1)
=p^2\left[\sum_{u=1}^{p-1}\left(\dfrac{a}{p}\right)\frac{a^3}{p^3}-\dfrac{3}{2}\sum_{u=1}^{p-1}\left(\dfrac{a}{p}\right)\frac{a^2}{p^2}+\dfrac{1}{2}\sum_{u=1}^{p-1}\left(\dfrac{a}{p}\right)\frac{a}{p}\right].
\end{aligned}
\]
Thus
\[
\begin{aligned}
\sum_{u=1}^{p-1}\left(\dfrac{a}{p}\right)a^3&=pB_{3,\chi}+\dfrac{3}{2}p\sum_{u=1}^{p-1}\left(\dfrac{a}{p}\right)a^2-\dfrac{1}{2}p^2\sum_{u=1}^{p-1}\left(\dfrac{a}{p}\right)a\\
&=pB_{3,\chi_p}-\dfrac{3}{2}p^3h(-p)+\dfrac{1}{2}p^3h(-p)=pB_{3,\chi_p}-p^3h(-p).
\end{aligned}
\]
Therefore
\[
\begin{aligned}
\left(\frac{2}{p} \right) \mFe'''(-1)&=-4(pB_{3,\chi_p}-p^3h(-p))-6(p+1)p^2h(-p)+7pB_{3,\chi}+(2p^3+6p^2)h(-p)\\
&=3pB_{3,\chi}\not=0.
\end{aligned}
\qedhere
\]
\end{proof}

Surprisingly, the exceptional factor $\Phi_{12}(x)$ can happen in this case. 

\begin{prop}
$ r_{\Delta}(\Phi_{12})= \tilde{r}_{\Delta}(\Phi_6) = \begin{cases} 1 &\mbox{if } p \equiv 7 \pmod{8} \\
0 & \mbox{if } p \equiv 3 \pmod{8}. \end{cases} $  
\end{prop}

\begin{proof}
We have
\[
\begin{aligned}
\mFe(-\zeta_3)
&= \left(\frac{2}{p} \right) \sum_{a=0}^{p-1} \left(\frac{a+\frac{p+1}{2}}{p} \right)[(-1)^a (-\zeta_3)^a -(-1)^a (-\zeta_3)^{a+p}]\\
&=\begin{cases} -\left(\frac{2}{p} \right) \sum\limits_{a=0}^{p-1} \left(\frac{a+\frac{p+1}{2}}{p} \right)\zeta_3^{a+2} &\text{ if } p\equiv 1 \pmod 3\\
-\left(\frac{2}{p} \right) \sum\limits_{a=0}^{p-1} \left(\frac{a+\frac{p+1}{2}}{p} \right)\zeta_3^{a+1} &\text{ if } p\equiv 2 \pmod 3
\end{cases}\\
&=A+B\zeta_3+C\zeta_3^2.
\end{aligned}
\]
Note that $A+B+C=0$ and $1+\zeta_3+\zeta_3^2=0$. Hence  $\mFe(-\zeta_3)=0$ if and only if $A=B=C=0$.
\\
\\
\noindent{ \bf Case 1:} $p\equiv 3\pmod 8$ and $p\equiv 1\pmod 3$. In this case, $p$ is of the form $p=19+24k$, for some integer $k$ and
\[
\begin{aligned}
B&=-\left(\frac{2}{p} \right) \sum\limits_{\substack{a\equiv 2\mod 3\\ 0\leq a\leq p-1}} \left(\frac{a+\frac{p+1}{2}}{p} \right)
=-\left(\frac{2}{p} \right)\left(\frac{3}{p} \right) \sum\limits_{b=4+4k}^{9+12k} \left(\frac{b}{p} \right)\\
&=-\left(\frac{6}{p} \right)\left[\sum\limits_{0<b<p/2} \left(\frac{b}{p} \right)-\sum\limits_{0<b<p/6} \left(\frac{b}{p} \right) \right]
=-4h(-p).
\end{aligned}
\]
The last equality follows form the following facts (see \cite[Corollary 3.4]{[Berndt]})
\[
\sum\limits_{0<b<p/2} \left(\frac{b}{p} \right)=\left(2-\left(\dfrac{2}{p}\right)\right)h(-p)=3h(-p),
\]
and (see \cite[Theorem 6.1]{[Berndt]})
\[
\sum\limits_{0<b<p/6} \left(\frac{b}{p} \right)=\dfrac{h(-p)}{2}\left(1+\left(\dfrac{2}{p}\right)+\left(\dfrac{3}{p}\right)-\left(\dfrac{6}{p}\right)\right)=-h(-p).
\]
Thus $B\not=0$ and $\mFe(-\zeta_3)\not=0$.
\\
\\
\noindent{ \bf Case 2:} $p\equiv 3\pmod 8$ and $p\equiv 2\pmod 3$. In this case, 
we have
\[
\begin{aligned}
B&=-\left(\frac{2}{p} \right) \sum\limits_{\substack{a\equiv 0\mod 3\\ 0\leq a\leq p-1}} \left(\frac{a+\frac{p+1}{2}}{p} \right)
=-\left(\frac{6}{p} \right)\left[\sum\limits_{0<b<p/2} \left(\frac{b}{p} \right)-\sum\limits_{0<b<p/6} \left(\frac{b}{p} \right) \right]
\\&=2h(-p).
\end{aligned}
\]
The last equality follows form the facts (see \cite[Corollary 3.4]{[Berndt]})
\[
\sum\limits_{0<b<p/2} \left(\frac{b}{p} \right)=\left(2-\left(\dfrac{2}{p}\right)\right)h(-p)=3h(-p),
\]
and (see \cite[Theorem 6.1]{[Berndt]})
\[
\sum\limits_{0<b<p/6} \left(\frac{b}{p} \right)=\dfrac{h(-p)}{2}\left(1+\left(\dfrac{2}{p}\right)+\left(\dfrac{3}{p}\right)-\left(\dfrac{6}{p}\right)\right)=h(-p).
\]
Thus $B\not=0$ and $\mFe(-\zeta_3)\not=0$.
\\
\\
\noindent{ \bf Case 3:} $p\equiv 7\pmod 8$ and $p\equiv 1\pmod 3$. In this case, 
we have
\[
\begin{aligned}
B&=-\left(\frac{2}{p} \right) \sum\limits_{\substack{a\equiv 2\mod 3\\ 0\leq a\leq p-1}} \left(\frac{a+\frac{p+1}{2}}{p} \right)
&=-\left(\frac{6}{p} \right)\left[\sum\limits_{0<b<p/2} \left(\frac{b}{p} \right)-\sum\limits_{0<b<p/6} \left(\frac{b}{p} \right) \right]
=0.
\end{aligned}
\]
The last equality follows form the facts (see \cite[Corollary 3.4]{[Berndt]})
\[
\sum\limits_{0<b<p/2} \left(\frac{b}{p} \right)=\left(2-\left(\dfrac{2}{p}\right)\right)h(-p)=h(-p),
\]
and (see \cite[Theorem 6.1]{[Berndt]})
\[
\sum\limits_{0<b<p/6} \left(\frac{b}{p} \right)=\dfrac{h(-p)}{2}\left(1+\left(\dfrac{2}{p}\right)+\left(\dfrac{3}{p}\right)-\left(\dfrac{6}{p}\right)\right)=h(-p).
\]
We also have
\[
\begin{aligned}
C&=-\left(\frac{2}{p} \right) \sum\limits_{\substack{a\equiv 0\mod 3\\ 0\leq a\leq p-1}} \left(\frac{a+\frac{p+1}{2}}{p} \right)
=-\sum\limits_{\substack{b\equiv 1\mod 3\\ p/2< b<p }} \left(\frac{b}{p} \right)-\sum\limits_{\substack{b\equiv 1\mod 3\\ p< b<3p/2}} \left(\frac{b}{p} \right)\\
&=\sum\limits_{\substack{b\equiv 1\mod 3\\ p/2<b<p}} \left(\frac{p-b}{p} \right)-\sum\limits_{\substack{b\equiv 1\mod 3\\ p< b< 3p/2}} \left(\frac{b-p}{p} \right)
=\sum\limits_{\substack{b\equiv 0\mod 3\\ 0<b<p/2}} \left(\frac{b}{p} \right)-\sum\limits_{\substack{b\equiv 0\mod 3\\ 0<b< p/2}} \left(\frac{b}{p} \right)=0. 
\end{aligned}
\]
Hence $C=0$. This implies that $A=0$ and $\mFe(-\zeta_3)=0$.

On the other hand, we have
\begin{align*}
\left(\frac{2}{p} \right) \mFe'(-\zeta_3) &= \sum_{a=0}^{p-1}     \left(\frac{a+\frac{p+1}{2}}{p} \right) (-a\zeta_3^{a-1} -(a+p)\zeta_3^{a+p-1}) \\
&= - \sum_{a=0}^{p-1}\left(\frac{a+\frac{p+1}{2}}{p} \right)a(\zeta_3^{a-1}+\zeta_3^a) -p \sum_{a=0}^{p-1} \left(\frac{a+\frac{p+1}{2}}{p} \right)\zeta_3^a \\
&=\sum_{a=0}^{p-1}\left(\frac{a+\frac{p+1}{2}}{p} \right)a\zeta_3^{a+1} -p \sum_{a=0}^{p-1} \left(\frac{a+\frac{p+1}{2}}{p} \right)\zeta_3^a
= E+F\zeta_3+G\zeta_3^2,
 \end{align*}
 here
 \[
 \begin{aligned}
 E&=\sum_{\substack{0\leq a\leq p-1\\ a\equiv 2\mod 3}} \left(\frac{a+\frac{p+1}{2}}{p} \right)a -p\sum_{\substack{0\leq a\leq p-1\\ a\equiv 0\mod 3}} \left(\frac{a+\frac{p+1}{2}}{p} \right)=\sum_{\substack{0\leq a\leq p-1\\ a\equiv 2\mod 3}} \left(\frac{a+\frac{p+1}{2}}{p} \right)a,\\
  F&=\sum_{\substack{0\leq a\leq p-1\\ a\equiv 0\mod 3}} \left(\frac{a+\frac{p+1}{2}}{p} \right)a -p\sum_{\substack{0\leq a\leq p-1\\ a\equiv 1\mod 3}} \left(\frac{a+\frac{p+1}{2}}{p} \right)=\sum_{\substack{0\leq a\leq p-1\\ a\equiv 0\mod 3}} \left(\frac{a+\frac{p+1}{2}}{p} \right)a,\\
G&=\sum_{\substack{0\leq a\leq p-1\\ a\equiv 1\mod 3}} \left(\frac{a+\frac{p+1}{2}}{p} \right)a -p\sum_{\substack{0\leq a\leq p-1\\ a\equiv 2\mod 3}} \left(\frac{a+\frac{p+1}{2}}{p} \right)=\sum_{\substack{0\leq a\leq p-1\\ a\equiv 1\mod 3}} \left(\frac{a+\frac{p+1}{2}}{p} \right)a.
 \end{aligned}
  \]
  Note that 
  $
   E+F+G=\sum\limits_{0\leq a\leq p-1} \left(\dfrac{a+\frac{p+1}{2}}{p} \right)a =-\dfrac{1}{2}\left(\dfrac{2}{p}\right)\mFe'(-1)=0.
$ 
  Hence $\mFe'(-\zeta_3)=0$ if and only if $E=F=G=0$.

 By considering modulo $2$, we have
  \[
  \begin{aligned}
  G&\equiv \sum_{\substack{0\leq a\leq p-1\\ a\equiv 1\mod 3\\a\equiv 1\pmod 2}} \left(\frac{a+\frac{p+1}{2}}{p} \right)
  \equiv \sum_{\substack{0\leq a\leq p-1\\ a\equiv 1\mod 6}} 1 = \dfrac{p-1}{6} \equiv 1\pmod 2.
    \end{aligned}
      \]
 In particular, $G\not=0$ and thus $\mFe'(-\zeta_3)\not=0$.
\\
\\
\noindent{ \bf Case 4:} $p\equiv 7\pmod 8$ and $p\equiv 2\pmod 3$. In this case, 
we have
\[
\begin{aligned}
B&=-\left(\frac{2}{p} \right) \sum\limits_{\substack{a\equiv 0\mod 3\\ 0\leq a\leq p-1}} \left(\frac{a+\frac{p+1}{2}}{p} \right)
&=-\left(\frac{6}{p} \right)\left[\sum\limits_{0<b<p/2} \left(\frac{b}{p} \right)-\sum\limits_{0<b<p/6} \left(\frac{b}{p} \right) \right]
=0.
\end{aligned}
\vspace{-.1cm}
\]
The last equality follows from the facts (see \cite[Corollary 3.4]{[Berndt]})
\[
\sum\limits_{0<b<p/2} \left(\frac{b}{p} \right)=\left(2-\left(\dfrac{2}{p}\right)\right)h(-p)=h(-p)
\]
and (see \cite[Theorem 6.1]{[Berndt]})
\[
\sum\limits_{0<b<p/6} \left(\frac{b}{p} \right)=\dfrac{h(-p)}{2}\left(1+\left(\dfrac{2}{p}\right)+\left(\dfrac{3}{p}\right)-\left(\dfrac{6}{p}\right)\right)=h(-p).
\]
We also have
\[
\begin{aligned}
A&=-\left(\frac{2}{p} \right) \sum\limits_{\substack{a\equiv 2\mod 3\\ 0\leq a\leq p-1}} \left(\frac{a+\frac{p+1}{2}}{p} \right)
=-\sum\limits_{\substack{b\equiv 2\mod 3\\ p/2< b<p }} \left(\frac{b}{p} \right)-\sum\limits_{\substack{b\equiv 2\mod 3\\ p< b<3p/2}} \left(\frac{b}{p} \right)\\
&=\sum\limits_{\substack{b\equiv 2\mod 3\\ p/2<b<p}} \left(\frac{p-b}{p} \right)-\sum\limits_{\substack{b\equiv 2\mod 3\\ p\leq b\leq 3p/2}} \left(\frac{b-p}{p} \right)
=\sum\limits_{\substack{b\equiv 0\mod 3\\ 0<b<p/2}} \left(\frac{b}{p} \right)-\sum\limits_{\substack{b\equiv 0\mod 3\\ 0\leq b\leq p/2}} \left(\frac{b}{p} \right)=0.
\end{aligned}
\]
 This implies that $C=0$ and $\mFe(-\zeta_3)=0$.

 On the other hand, we have
\begin{align*}
\left(\frac{2}{p} \right) \mFe'(-\zeta_3) &= \sum_{a=0}^{p-1}     \left(\frac{a+\frac{p+1}{2}}{p} \right) (-a\zeta_3^{a-1} -(a+p)\zeta_3^{a+p-1}) \\
&= - \sum_{a=0}^{p-1}\left(\frac{a+\frac{p+1}{2}}{p} \right)a(\zeta_3^{a-1}+\zeta_3^{a+1}) -p \sum_{a=0}^{p-1} \left(\frac{a+\frac{p+1}{2}}{p} \right)\zeta_3^{a+1} \\
&=\sum_{a=0}^{p-1}\left(\frac{a+\frac{p+1}{2}}{p} \right)a\zeta_3^{a} -p \sum_{a=0}^{p-1} \left(\frac{a+\frac{p+1}{2}}{p} \right)\zeta_3^{a+1}
= E+F\zeta_3+G\zeta_3^2,
 \end{align*}
 here
 {\allowdisplaybreaks
  \begin{align*}
 E
 &=\sum_{\substack{0\leq a\leq p-1\\ a\equiv 0\mod 3}} \left(\frac{a+\frac{p+1}{2}}{p} \right)a,
  \quad\quad F
  =\sum_{\substack{0\leq a\leq p-1\\ a\equiv 1\mod 3}} \left(\frac{a+\frac{p+1}{2}}{p} \right)a,\\
G&
=\sum_{\substack{0\leq a\leq p-1\\ a\equiv 2\mod 3}} \left(\frac{a+\frac{p+1}{2}}{p} \right)a.
 \end{align*}
  }
  
We have
\vspace{-.1cm}
\begin{align*} E &=\sum_{\substack{0\leq a\leq p-1\\ a\equiv 0\mod 3}} \left(\frac{a+\frac{p+1}{2}}{p} \right)a 
= 3 \sum_{0 \leq m < p/3} \left(\frac{m+\frac{p+1}{6}}{p} \right) m \\
&= 3 \sum_{p/6 <u <p/2} \left(\frac{u}{p} \right)u - \frac{p+1}{6} \sum_{p/6<u<p/2} \left(\frac{u}{p} \right).
\end{align*}
By  \cite[Page 23]{[MTT3]},
$\sum\limits_{p/6<u<p/2} \left(\dfrac{u}{p} \right) =S_{12}-S_{16}= h(-p)-h(-p)=0.$ 
Therefore 
$E = 3 \sum\limits_{p/6 <u <p/2} \left(\dfrac{u}{p} \right)u.$  

Recall from \cite[Section 13]{[Berndt]} that 
$ S_{ij}(\chi_p,1) = \sum\limits_{(i-1)/j< p < pi/ j} \chi_p(a) a. $
By \cite[Theorem 13.3]{[Berndt]} and the fact that  $p \equiv 7 \pmod{8}$, $S_{12}(\chi_p,1) = 0$ and hence 
$ E+ 3S_{16}(\chi_p, 1) =0.$

By \cite[Formula 12.2]{[Berndt]}, applying for $\chi=\chi_{-p}$, $f(x)=x$, $c=0$ and $d=p/6$, we have
 \[
 \begin{aligned}
 S_{16}(\chi,1)&=-\dfrac{2iG(\chi)}{p}\sum\limits_{n=1}^\infty\chi(n) \int\limits_c^d x\sin(\dfrac{2\pi n x}{p})dx\\
 &=\dfrac{2iG(\chi)}{p}\sum\limits_{n=1}^\infty\chi(n)\dfrac{p^2}{12\pi^2n^2}\left(\pi n\cos(\dfrac{\pi n}{3})-3\sin(\dfrac{\pi n}{3})\right)
 \end{aligned}
 \]
  We have 
  {\allowdisplaybreaks
  \begin{align*}
 \sum_{n=1}^\infty \dfrac{\chi(n)}{n}\cos(\pi n/3)&=\dfrac{1}{2}\sum_{\substack{1\leq n\\n\equiv 1\mod 6}}\dfrac{\chi(n)}{n}-\dfrac{1}{2}\sum_{\substack{1\leq n\\n\equiv 2\mod 6}}\dfrac{\chi(n)}{n}-\sum_{\substack{1\leq n\\n\equiv 3\pmod 6}}\dfrac{\chi(n)}{n}\\
 &-\dfrac{1}{2}\sum_{\substack{4\leq n\\n\equiv 4\pmod 6}}\dfrac{\chi(n)}{n}+\dfrac{1}{2}\sum_{\substack{1\leq n\\n\equiv 5\pmod 6}}\dfrac{\chi(n)}{n}+\sum_{\substack{1\leq n\\n\equiv 0\pmod 6}}\dfrac{\chi(n)}{n}\\
 &=\dfrac{1}{2}\sum_{1\leq n}\dfrac{\chi(n)}{n} -\sum_{\substack{1\leq n\\n\equiv 0\pmod 2}}\dfrac{\chi(n)}{n}\\
 &-\dfrac{3}{2}\sum_{\substack{1\leq n\\n\equiv 0\pmod 3}}\dfrac{\chi(n)}{n}+3\sum_{\substack{1\leq n\\n\equiv 0\pmod 6}}\dfrac{\chi(n)}{n}\\
 &=\dfrac{1}{2}L(\chi,1)-\dfrac{\chi(2)}{2}L(\chi,1)-\dfrac{3}{2}\dfrac{\chi(3)}{3}L(\chi,1)+3\dfrac{\chi(6)}{6}L(\chi,1)
 =0.
 \end{align*}
 }
 Hence 
 \[
 \begin{aligned}
 S_{16}(\chi,1)&=-\dfrac{2iG(\chi)}{p}\dfrac{p^2}{12\pi^2n^2}\times 3\sum_{n=1}^\infty  \chi(n)\sin(\pi n/3)
 =\dfrac{p\sqrt{p}}{2\pi^2}\sum_{n=1}^\infty \dfrac{\chi(n)}{n^2}\sin(\pi n/3).
 \end{aligned}
 \]
 Note that
 \[
 \begin{aligned}
 \sum_{n=1}^\infty \dfrac{\chi(n)}{n^2}\sin(\pi n/3)&=\dfrac{1}{2}\left( \dfrac{\chi(1)}{1}+\dfrac{\chi(2)}{2^2}-\dfrac{\chi(4)}{4^2}-\dfrac{\chi(5)}{5^2}+\cdots\right)\\
 &\geq \dfrac{\sqrt{3}}{2}\left(1+\dfrac{1}{4}-\sum_{n\geq 4}\dfrac{1}{n^2}\right)=\dfrac{\sqrt{3}}{2}\left(1+\dfrac{1}{4}-\dfrac{\pi^2}{6}+1+\dfrac{1}{2^2}+\dfrac{1}{3^2}\right)>0.
 \end{aligned}
 \]
 Therefore $E<0$ and thus $F'_\Delta(-\zeta_3)\not=0$.
  \end{proof}

\subsection{The Fekete polynomials $f_{\Delta}(x)$ and $g_{\Delta}(x)$}
From the results discussed in the previous section, we introduce the following definition. 
\begin{definition}
Let $\Delta = 4p$ with $p \equiv 3 \pmod{4}$. The Fekete polynomial $f_{\Delta}(x)$ is given by the following formula
\[ f_{\Delta}(x)= \begin{cases} \dfrac{\mFe(x)}{\Phi_1(x)^2 \Phi_2(x) \Phi_p(x)} &\mbox{if $p \equiv 3 \pmod{8}$} \\
\dfrac{\mFe(x)}{\Phi_1(x)^2 \Phi_2(x)^3 \Phi_6(x) \Phi_p(x)} & \mbox{if $p \equiv 7 \pmod{8}$}. \end{cases} \] 
\end{definition}

We have the following observation. 
\begin{prop}
$f_{\Delta}(x)$ is a reciprocal polynomial of degree 
\[ \deg(f_{\Delta})= \begin{cases} p-7 &\mbox{if } p \equiv 7 \pmod{8} \\
p-3 & \mbox{if } p \equiv 3 \pmod{8}. \end{cases} \] 
\end{prop}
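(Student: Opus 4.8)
The plan is to compute the degree of $f_{\Delta}(x)$ directly from the defining formula and to deduce the reciprocal property from the reciprocity already established for $\mFe(x)$. For the degree, I would start from the factorization
\[
f_{\Delta}(x)= \begin{cases} \dfrac{\mFe(x)}{\Phi_1(x)^2 \Phi_2(x) \Phi_p(x)} &\mbox{if $p \equiv 3 \pmod{8}$},\\[2ex]
\dfrac{\mFe(x)}{\Phi_1(x)^2 \Phi_2(x)^3 \Phi_6(x) \Phi_p(x)} & \mbox{if $p \equiv 7 \pmod{8}$},
\end{cases}
\]
and use $\deg \mFe = D/2 - 1 = 2p - 1$ together with $\deg \Phi_1 = \deg \Phi_2 = 1$, $\deg \Phi_6 = 2$, $\deg \Phi_p = p-1$. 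In the case $p \equiv 3 \pmod 8$ this gives $\deg f_{\Delta} = (2p-1) - 2 - 1 - (p-1) = p - 3$; in the case $p \equiv 7 \pmod 8$ it gives $(2p-1) - 2 - 3 - 2 - (p-1) = p - 7$. (Here I am implicitly using the results of the previous subsection, which show that the listed cyclotomic polynomials divide $\mFe(x)$ to exactly the stated multiplicities, so that $f_{\Delta}(x)$ is a genuine polynomial of the asserted degree with no further cancellation.)

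For the reciprocal property, I would invoke Proposition~\ref{prop:reciprocal1}: since $\Delta = 4p$ with $p \equiv 3 \pmod 4$, the character $\chi_{\Delta}$ is the odd character attached to $\Q(\sqrt{-p})$, so $\chi_{\Delta}(-1) = -1$, and the second identity of that proposition reads
\[
x^{\frac{D}{2}-1} \mFe\!\left(\tfrac{1}{x}\right) = -\,\mFe(x), \qquad D = 4p,
\]
i.e.\ $\mFe(x)$ is anti-reciprocal of degree $2p-1$ (an odd degree, consistent with the sign $-1$). Next I would note that each of the cyclotomic factors being divided out is reciprocal: $\Phi_1(x)^2$ and $\Phi_n(x)$ for $n \ge 2$ are reciprocal by Lemma~\ref{lem:reciprocal}, and a product of reciprocal polynomials is reciprocal. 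Writing $\mFe(x) = R(x) f_{\Delta}(x)$ where $R(x)$ is the product of the cyclotomic factors, with $\deg R$ equal to $2p-1 - \deg f_{\Delta}$, the functional equations $x^{\deg R} R(1/x) = R(x)$ and $x^{2p-1}\mFe(1/x) = -\mFe(x)$ combine to give $x^{\deg f_{\Delta}} f_{\Delta}(1/x) = -f_{\Delta}(x)$. This would make $f_{\Delta}(x)$ \emph{anti}-reciprocal, not reciprocal, so the parity bookkeeping must be handled carefully; the resolution is that one of the factors pulled out — namely $\Phi_1(x) = x-1$, which appears with odd total exponent in the denominator counting only the genuinely anti-reciprocal part — contributes the sign. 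Concretely, $\Phi_1(x) = x-1$ satisfies $x\,\Phi_1(1/x) = 1 - x = -\Phi_1(x)$, so it is anti-reciprocal; since $\Phi_1$ appears squared, $\Phi_1(x)^2$ is reciprocal, but I should double-check whether the stated formula is already consistent, or whether the intended normalization absorbs a sign. The cleanest route is: $\mFe(x)/\Phi_1(x)$ has degree $2p-2$ and satisfies $x^{2p-2}\bigl(\mFe/\Phi_1\bigr)(1/x) = \bigl(\mFe/\Phi_1\bigr)(x)$, i.e.\ it is reciprocal, and then dividing by the remaining reciprocal factors $\Phi_1\Phi_2\Phi_p$ (resp.\ $\Phi_1\Phi_2^3\Phi_6\Phi_p$) preserves reciprocity, yielding that $f_{\Delta}(x)$ is reciprocal.

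The main obstacle I anticipate is precisely this sign/parity tracking in the reciprocity argument: one must confirm that the exact combination of anti-reciprocal factor $\mFe(x)$ and the cyclotomic factors (one of which, $\Phi_1$, is itself anti-reciprocal) produces a genuinely reciprocal — rather than anti-reciprocal — quotient, matching the degree parity ($p-3$ and $p-7$ are both even since $p$ is odd, which is consistent with reciprocity). Once the pairing "$\mFe(x)/\Phi_1(x)$ is reciprocal of even degree $2p-2$" is established, the rest is a routine check that the surviving denominator is a product of reciprocal polynomials of even total degree, so the final quotient $f_{\Delta}(x)$ is reciprocal of the stated even degree. The degree computation itself presents no difficulty beyond bookkeeping with $\varphi(n)$.
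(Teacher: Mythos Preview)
Your degree computation is fine and matches the paper's. The reciprocity argument, however, rests on a sign error that you never resolve.

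You assert that $\chi_{\Delta}$ is the odd character attached to $\Q(\sqrt{-p})$. That is false: $\Delta = 4p > 0$ is the discriminant of the \emph{real} quadratic field $\Q(\sqrt{p})$ (recall $p \equiv 3 \pmod 4$, so $\Q(\sqrt{p})$ has discriminant $4p$), hence $\chi_{\Delta}$ is even and $\chi_{\Delta}(-1) = +1$. This is also visible from the paper's earlier computation $\tilde r_{\Delta}(\Phi_1) = 2$, which by Corollary~\ref{cor:Even_Multiplicity_Phi1_Phi2} happens exactly when $\chi_\Delta$ is even. Consequently Proposition~\ref{prop:reciprocal1} gives
\[
x^{2p-1}\,\mFe\!\left(\tfrac{1}{x}\right) = +\,\mFe(x),
\]
so $\mFe$ is already \emph{reciprocal}, not anti-reciprocal. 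The paper's proof is then a one-liner: the denominator is a product of reciprocal polynomials ($\Phi_1(x)^2$ is reciprocal by Lemma~\ref{lem:reciprocal}(1), and $\Phi_2,\Phi_6,\Phi_p$ are reciprocal by Lemma~\ref{lem:reciprocal}(2)), so the quotient $f_{\Delta}$ is reciprocal.

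Your attempted rescue (``cleanest route'') does not actually work even on its own terms: you claim the remaining factor $\Phi_1\Phi_2\Phi_p$ is reciprocal, but $\Phi_1(x)\Phi_2(x) = x^2-1$ is \emph{anti}-reciprocal (since $\Phi_1$ is anti-reciprocal and $\Phi_2$ is reciprocal), so $\Phi_1\Phi_2\Phi_p$ is anti-reciprocal. You reached the right conclusion only because this second error cancelled the first. Once the parity of $\chi_\Delta$ is fixed, all the sign-tracking you flag as an ``obstacle'' disappears entirely.
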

\begin{proof}
By definition $\mFe(x)$ is a polynomial of degree $D/2-1 = 2p-1.$ The degree of $f_{\Delta}$ described above follows from its definition. Furthermore, by Proposition \ref{prop:reciprocal1}, we know that $\mFe(x)$ is a reciprocal polynomial. Additionally, by Lemma \ref{lem:reciprocal}, we also know that $\Phi_n(x)$ is a reciprocal polynomial if $n \geq 2.$ We conclude that $f_{\Delta}$ is a reciprocal polynomial as well. 

\end{proof}
We then define the reduced Fekete polynomial $g_{\Delta}(x)$ as the trace polynomial of $f_{\Delta}(x).$ 
\begin{definition}
The reduced Fekete polynomial $g_{\Delta}(x) \in \Z[x]$ is defined as 
\[ f_{\Delta}(x) = x^{\frac{\deg(f_{\Delta})}{2}} g_{\Delta}(x+\frac{1}{x}) .\] 
\end{definition}

\section{Generalized Fekete polynomials with $\Delta=-4p$, $p \equiv 1 \pmod{4}$} 
\label{sec:-4p}
\subsection{Cyclotomic factors of $\mFe(x)$}
Similar to the case $\Delta= 4p$, we know the following equality (see Corollary~\ref{cor:Even_Multiplicity_Phi1_Phi2})
\[ r_{\Delta}(\Phi_1)= r_{\Delta}(\Phi_2)= \tilde{r}_{\Delta}(\Phi_1) = 1 .\] 
We next determine the multiplicity of $\Phi_p(x), \Phi_{2p}(x)$ in $F_{\Delta}(x).$ We first recall the definition of $\mFe(x)$
\[ \mFe(x) = \sum_{a=0}^{2p-1} \left(\frac{-4p}{2a+1} \right) x^a .\] 
By the reciprocity law, we have 
\[ \left(\frac{-4p}{2a+1} \right)=\left(\frac{-1}{2a+1} \right) \left(\frac{p}{2a+1} \right)= (-1)^a \left(\frac{2a+1}{p} \right) .\]
Wen can then deduce that 
\[ \mFe(x) = \left(\frac{2}{p} \right) \sum_{a=0}^{p-1} \left(\frac{a+\frac{p+1}{2}}{p} \right) \left [(-1)^a x^a -(-1)^a x^{a+p} \right] .\]


\begin{prop}
$  r_{\Delta}(\Phi_p)=r_{\Delta}(\Phi_{2p})= \tilde{r}_{\Delta}(\Phi_p) =1.$ 
\end{prop}

\begin{proof}
We have 
$
\mFe'(x)=\left(\dfrac{2}{p} \right) \sum\limits_{a=1}^{p-1} \left(\dfrac{a+\frac{p+1}{2}}{p} \right)[(-1)^a a x^{a-1} -(-1)^a (a+p) x^{a+p-1}].
$
Hence
\[
\begin{aligned}
\mFe'(\zeta_p)
=-\left(\frac{2}{p} \right)p \sum_{a=1}^{p-1} \left(\frac{a+\frac{p+1}{2}}{p} \right)(-1)^a  \zeta_p^{a-1}.
\end{aligned}
\]
By Lemma \ref{lem:cyclotomic_basis_square_free} we know that $\{\zeta_p, \zeta_p^2, \ldots, \zeta_p^{p-1} \}$ is $\Q$-linearly independent. Therefore, we can conclude that $\mFe'(\zeta_p) \neq 0$ and the above statement holds. 
\end{proof}

Regarding the multiplicity of $\Phi_4(x)$, we have the following 

\begin{prop}
$  r_{\Delta}(\Phi_4)= \tilde{r}_{\Delta}(\Phi_2) = 2. $ 
\end{prop}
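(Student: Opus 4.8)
The plan is to identify $r_{\Delta}(\Phi_4)$ with the order of vanishing of $\mFe(x)$ at $x=-1$. Indeed, by Corollary~\ref{cor:pair} (applied with $m=1$, so $\Phi_{4m}=\Phi_4$ and $\Phi_{2m}=\Phi_2$) one has $r_{\Delta}(\Phi_4)=\tilde r_{\Delta}(\Phi_2)$, and $\tilde r_{\Delta}(\Phi_2)$ is exactly the multiplicity of $x=-1$ as a root of $\mFe(x)$. So it suffices to show $\mFe(-1)=0$, $\mFe'(-1)=0$ and $\mFe''(-1)\neq 0$. Throughout I would write $\chi_p=\left(\tfrac{\cdot}{p}\right)$ for the quadratic character of conductor $p$ --- which is \emph{even} because $p\equiv 1\pmod 4$ --- and $c_a=\left(\tfrac{a+(p+1)/2}{p}\right)$, so that the formula recalled above reads $\mFe(x)=\left(\tfrac{2}{p}\right)\sum_{a=0}^{p-1}(-1)^a c_a\,(x^a-x^{a+p})$. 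The evenness of $\chi_p$ is precisely what makes this case cleaner than $\Delta=4p$: the first-moment quantities that were nonzero class-number expressions there now vanish identically.

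First I would evaluate $\mFe$ and $\mFe'$ at $-1$. Setting $x=-1$, each summand $(-1)^ac_a\bigl((-1)^a-(-1)^{a+p}\bigr)$ collapses to $2c_a$ since $p$ is odd, so $\mFe(-1)=2\left(\tfrac{2}{p}\right)\sum_{a=0}^{p-1}c_a=0$ because $\{a+(p+1)/2\mid 0\leq a\leq p-1\}$ runs through a complete residue system modulo $p$. Differentiating once and evaluating at $-1$, the same collapse gives $\left(\tfrac{2}{p}\right)\mFe'(-1)=-2\sum_{a=0}^{p-1}c_a a$ (the $\sum c_a$ contribution dropping). Substituting $u\equiv a+(p+1)/2\pmod p$ and splitting the range into $1\leq u\leq (p-1)/2$ and $(p+1)/2\leq u\leq p$, this becomes a fixed integer combination of $\sum_{u=1}^{p-1}\left(\tfrac{u}{p}\right)u$ and the half-sums $\sum_{u=1}^{(p-1)/2}\left(\tfrac{u}{p}\right)$, $\sum_{u=(p+1)/2}^{p-1}\left(\tfrac{u}{p}\right)$. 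The first moment equals $p\,B_{1,\chi_p}=0$ by Lemma~\ref{lem:mult}(a), and each half-sum is $0$ by Lemma~\ref{half_sum} (together with $\sum_{u=1}^{p-1}\left(\tfrac{u}{p}\right)=0$), so $\mFe'(-1)=0$ and $-1$ is a root of order at least $2$.

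The crux is the nonvanishing of $\mFe''(-1)$. Differentiating twice and using $a(a-1)+(a+p)(a+p-1)=2a^2+(2p-2)a+(p^2-p)$, I get $\left(\tfrac{2}{p}\right)\mFe''(-1)=2\sum_{a=0}^{p-1}c_a a^2$, since the $\sum c_a a$ and $\sum c_a$ pieces vanish by the previous step. Running the substitution $u\equiv a+(p+1)/2$ once more and using $\sum_{u=1}^{p-1}\left(\tfrac{u}{p}\right)u=0$ collapses the remaining terms to
\[
\left(\tfrac{2}{p}\right)\mFe''(-1)=4\sum_{v=1}^{(p-1)/2}\left(\tfrac{v}{p}\right)v^2=:4A'.
\]
It then remains to evaluate the half second moment $A'$. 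Here I would start from $p\,B_{2,\chi_p}=\sum_{a=1}^{p-1}\left(\tfrac{a}{p}\right)a^2$ and extract two linear relations between $A'$ and $B:=\sum_{v=1}^{(p-1)/2}\left(\tfrac{v}{p}\right)v$: one from splitting the full second moment via the involution $a\mapsto p-a$ (using evenness of $\chi_p$ and Lemma~\ref{half_sum}), giving $p\,B_{2,\chi_p}=2A'-2pB$; and one from the substitution $a=2b$ on the even residues, which via $\sum_{a\,\text{even}}\left(\tfrac{a}{p}\right)a^2=4\left(\tfrac{2}{p}\right)A'$, $\sum_{a\,\text{even}}\left(\tfrac{a}{p}\right)a=2\left(\tfrac{2}{p}\right)B$ and $\sum_{a\,\text{even}}\left(\tfrac{a}{p}\right)=0$, after computing $\sum_{a\,\text{odd}}\left(\tfrac{a}{p}\right)a^2$ in two ways, gives $\left(\tfrac{2}{p}\right)p\,B_{2,\chi_p}=8A'-4pB$. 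Solving, $A'=\tfrac14\bigl(\left(\tfrac{2}{p}\right)-2\bigr)p\,B_{2,\chi_p}$, hence $\mFe''(-1)=\bigl(1-2\left(\tfrac{2}{p}\right)\bigr)p\,B_{2,\chi_p}$, which is nonzero: $1-2\left(\tfrac{2}{p}\right)\in\{-1,3\}$ and $B_{2,\chi_p}\neq 0$ by Lemma~\ref{lem:mult}(b). This shows $-1$ is a root of $\mFe(x)$ of exact order $2$, so $r_{\Delta}(\Phi_4)=\tilde r_{\Delta}(\Phi_2)=2$.

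I expect the only real difficulty to be careful bookkeeping in the last step: tracking the index shifts in the $u$- and $a=2b$-substitutions, keeping straight which partial sums vanish and why (complete residue systems for some, Lemma~\ref{half_sum} for others, evenness of $\chi_p$ for the first moment), and not confusing $\chi_p$ with the character $\chi_{\Delta}=\chi_{-4p}$. As a sanity check and partial shortcut, for $p\equiv 5\pmod 8$ one can see $A'\neq 0$ directly: modulo $2$ one has $\left(\tfrac{v}{p}\right)v\equiv v$ and $v^2\equiv v$, so $A'\equiv\sum_{v=1}^{(p-1)/2}v=\tfrac{p^2-1}{8}\pmod 2$, which is odd precisely when $p\equiv 5\pmod 8$; the case $p\equiv 1\pmod 8$ still requires the generalized-Bernoulli computation above.
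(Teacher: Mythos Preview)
Your argument is correct and follows the same overall strategy as the paper: identify $r_\Delta(\Phi_4)=\tilde r_\Delta(\Phi_2)$ with the order of vanishing of $\mFe$ at $x=-1$, then show $\mFe(-1)=\mFe'(-1)=0$ and $\mFe''(-1)\neq 0$ directly from the expression $\mFe(x)=\left(\tfrac{2}{p}\right)\sum_a(-1)^ac_a(x^a-x^{a+p})$.

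The only substantive difference is in the handling of $\mFe''(-1)$. The paper, after reaching $\sum_a c_a a^2$, performs a single index shift to write this as $\sum_{u=0}^{p-1}\left(\tfrac{u}{p}\right)\bigl(u-\tfrac{p+1}{2}\bigr)^2$ and then drops the lower-order terms to get $\sum_{u=0}^{p-1}\left(\tfrac{u}{p}\right)u^2\neq 0$. You instead reduce to the half second moment $A'=\sum_{v\leq (p-1)/2}\left(\tfrac{v}{p}\right)v^2$ and evaluate it explicitly via the two relations $pB_{2,\chi_p}=2A'-2pB$ and $\left(\tfrac{2}{p}\right)pB_{2,\chi_p}=8A'-4pB$, obtaining the closed form $\mFe''(-1)=\bigl(1-2\left(\tfrac{2}{p}\right)\bigr)pB_{2,\chi_p}$. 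Your longer route is in fact the careful one: the paper's shift identity is not literally valid as written (wrapping $u$ back into $\{0,\dots,p-1\}$ alters the polynomial factor; e.g.\ for $p=5$ one has $\sum_a c_a a^2=-6$ while $\sum_{u=0}^{4}\left(\tfrac{u}{5}\right)u^2=4$), although both quantities are nonzero so the conclusion is unaffected. Your explicit formula is a nice byproduct and also explains the mod-$2$ sanity check you mention for $p\equiv 5\pmod 8$.
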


\begin{proof}
We need to show that $\mFe(-1)=\mFe'(-1)=0$ and $\mFe''(-1) \neq 0.$
We have 
\[ \mFe(-1) = 2 \left(\frac{2}{p} \right) \sum_{a=0}^{p-1} \left(\frac{a+\frac{p+1}{2}}{p} \right)=0 .\] 
We also have 
\begin{align*}
\left(\frac{2}{p} \right) \mFe'(-1) &= \sum_{a=0}^{p-1}     \left(\frac{a+\frac{p+1}{2}}{p} \right) (-2a -p) 
= - 2 \sum_{a=0}^{\frac{p-1}{2}} \left(\frac{a+\frac{p+1}{2}}{p} \right)a - 2 \sum_{a=\frac{p+1}{2}}^{p-1} \left(\frac{a+\frac{p+1}{2}}{p} \right)a \\
&=- 2 \sum_{u=\frac{p+1}{2}}^{p} \left(\frac{u}{p} \right)(u-\frac{p+1}{2}) - 2 \sum_{u=1}^{\frac{p-1}{2}} \left(\frac{u}{p} \right)(u+\frac{p-1}{2}) 
= - 2 \sum_{u=1}^{p-1} \left(\frac{u}{p} \right)u  =0. 
\end{align*}
For the second to the last equality we use the fact that for $p \equiv 1 \pmod{4}$ (see \cite[Lemma 2.5]{[MTT3]}), 
$\sum\limits_{u=\frac{p+1}{2}}^p  \left(\dfrac{u}{p} \right) = \sum\limits_{u=1}^{\frac{p-1}{2}} \left(\dfrac{u}{p} \right)= 0.$

Finally, we have 
{ \allowdisplaybreaks
\begin{align*}
\left(\frac{2}{p} \right) \mFe''(-1) &= 2 \sum_{a=0}^{p-1} \left(\frac{a+\frac{p+1}{2}}{p} \right) (a^2+(a+p)^2) \\
&= 4 \sum_{a=0}^{p-1} \left(\frac{a+\frac{p+1}{2}}{p} \right)a^2 + 4p \sum_{a=0}^{p-1}  \left(\frac{a+\frac{p+1}{2}}{p} \right)a+ 2p^2 \sum_{a=0}^{p-1} \left(\frac{a+\frac{p+1}{2}}{p} \right) \\ 
&= 4 \sum_{a=0}^{p-1} \left(\frac{a+\frac{p+1}{2}}{p} \right)a^2
=4 \sum_{u=0}^{p-1} \left(\frac{u}{p} \right)\left(u-\frac{p+1}{2}\right)^2
=4 \sum_{u=0}^{p-1} \left(\frac{u}{p} \right)u^2\not=0. 
\end{align*} 
}
The last equation follows from \cite[Lemma 2.2]{[MTT3]}).
\end{proof}

For the exceptional factor $\Phi_{12}(x)$, we have 

\begin{prop}
$  r_{\Delta}(\Phi_{12})= \tilde{r}_{\Delta}(\Phi_6) = \begin{cases} 0 &\mbox{if } p \equiv 1 \pmod{8} \\
1 & \mbox{if } p \equiv 5 \pmod{8}. \end{cases}$ 
\end{prop}

\begin{proof}
We have
\[
\begin{aligned}
\mFe(-\zeta_3)
&=\left(\frac{2}{p} \right) \sum_{a=0}^{p-1} \left(\frac{a+\frac{p+1}{2}}{p} \right) [\zeta_3^a+\zeta_3^{a+p}]\\
&=\begin{cases} -\left(\frac{2}{p} \right) \sum\limits_{a=0}^{p-1} \left(\frac{a+\frac{p+1}{2}}{p} \right)\zeta_3^{a+2} &\text{ if } p\equiv 1 \pmod 3\\
-\left(\frac{2}{p} \right) \sum\limits_{a=0}^{p-1} \left(\frac{a+\frac{p+1}{2}}{p} \right)\zeta_3^{a+1} &\text{ if } p\equiv 2 \pmod 3
\end{cases}\\
&=A+B\zeta_3+C\zeta_3^2.
\end{aligned}
\]
Note that $A+B+C=0$ and $1+\zeta_3+\zeta_3^2=0$. Hence  $\mFe(-\zeta_3)=0$ if and only if $A=B=C=0$.
\\
\\
\noindent{ \bf Case 1:} $p\equiv 1\pmod 8$ and $p\equiv 1\pmod 3$. In this case, $B=h(-3p)\not=0$ and $\mFe(-\zeta_3)\not=0$.

\noindent{ \bf Case 2:} $p\equiv 1\pmod 8$ and $p\equiv 2\pmod 3$. In this case,
$B=-h(-3p)\not=0$ and $\mFe(-\zeta_3)\not=0$.

\noindent{ \bf Case 3:} $p\equiv 5\pmod 8$ and $p\equiv 1\pmod 3$. In this case, 
$B=C=0=A$ and $\mFe(-\zeta_3)=0$.

On the other hand, we have
\begin{align*}
\left(\frac{2}{p} \right) \mFe'(-\zeta_3) &= \sum_{a=0}^{p-1}     \left(\frac{a+\frac{p+1}{2}}{p} \right) (-a\zeta_3^{a-1} -(a+p)\zeta_3^{a+p-1}) \\
&=\sum_{a=0}^{p-1}\left(\frac{a+\frac{p+1}{2}}{p} \right)a\zeta_3^{a+1} -p \sum_{a=0}^{p-1} \left(\frac{a+\frac{p+1}{2}}{p} \right)\zeta_3^a
= E+F\zeta_3+G\zeta_3^2,
 \end{align*}
 here
 \[
 \begin{aligned}
 E&=
 =\sum_{\substack{0\leq a\leq p-1\\ a\equiv 2\mod 3}} \left(\frac{a+\frac{p+1}{2}}{p} \right)a,\quad\quad
  F
  =\sum_{\substack{0\leq a\leq p-1\\ a\equiv 0\mod 3}}\left(\frac{a+\frac{p+1}{2}}{p} \right)a,\\
G&
=\sum_{\substack{0\leq a\leq p-1\\ a\equiv 1\mod 3}} \left(\frac{a+\frac{p+1}{2}}{p} \right)a.
 \end{aligned}
  \]

  One can check that $E=-3\dfrac{p\sqrt{p}}{\pi^2}L(\chi_p,2)<0$ and hence $F'_\Delta(-\zeta_3)\not=0$.

\noindent{ \bf Case 4:} $p\equiv 5\pmod 8$ and $p\equiv 2\pmod 3$. 
In this case, 
$B=C=0=A$ and $\mFe(-\zeta_3)=0$.

 On the other hand, we have
\begin{align*}
\left(\frac{2}{p} \right) \mFe'(-\zeta_3) &= \sum_{a=0}^{p-1}     \left(\frac{a+\frac{p+1}{2}}{p} \right) (-a\zeta_3^{a-1} -(a+p)\zeta_3^{a+p-1}) \\
&=\sum_{a=0}^{p-1}\left(\frac{a+\frac{p+1}{2}}{p} \right)a\zeta_3^{a} -p \sum_{a=0}^{p-1} \left(\frac{a+\frac{p+1}{2}}{p} \right)\zeta_3^{a+1}
= E+F\zeta_3+G\zeta_3^2,
 \end{align*}
 here
 \[
 \begin{aligned}
 E&
 =\sum_{\substack{0\leq a\leq p-1\\ a\equiv 0\mod 3}} \left(\frac{a+\frac{p+1}{2}}{p} \right)a,\quad\quad
  F
  =\sum_{\substack{0\leq a\leq p-1\\ a\equiv 1\mod 3}} \left(\frac{a+\frac{p+1}{2}}{p} \right)a,\\
G&
=\sum_{\substack{0\leq a\leq p-1\\ a\equiv 2\mod 3}} \left(\frac{a+\frac{p+1}{2}}{p} \right)a.
 \end{aligned}
  \]
  
   By considering modulo $2$, we have
  \[
  \begin{aligned}
  F&\equiv \sum_{\substack{0\leq a\leq p-1\\ a\equiv 1\mod 3\\a\equiv 1\mod 2}} \left(\frac{a+\frac{p+1}{2}}{p} \right) 
  \equiv \sum_{\substack{0\leq a\leq p-1\\ a\equiv 1\mod 6}} 1 
  \equiv \dfrac{p-1}{6} \equiv 1\pmod 2.
    \end{aligned}
      \]
   In particular, $F\not=0$ and thus $\mFe'(-\zeta_3)\not=0$.
\end{proof}

\subsection{The Fekete polynomials $f_{\Delta}(x)$ and $g_{\Delta}(x)$}
We introduce the following definition. 
\begin{definition}
Let $\Delta =-4p$ with $p \equiv 1 \pmod{4}.$ The Fekete polynomial $f_{\Delta}(x)$ is given by the following formula 
\[ f_{\Delta}(x)= \begin{cases} \dfrac{-\mFe(x)}{\Phi_1(x) \Phi_2(x)^2 \Phi_6(x) \Phi_p(x) } &\mbox{if $p \equiv 5 \pmod{8}$} \\
\dfrac{-\mFe(x)}{\Phi_1(x) \Phi_2(x)^2 \Phi_p(x)} & \mbox{if $p \equiv 1 \pmod{8}$}. \end{cases} \] 
\end{definition}
We have the following observation. 
\begin{prop}
$f_{\Delta}(x)$ is a reciprocal polynomial of degree 
\[ \deg(f_{\Delta})= \begin{cases} p-5 &\mbox{if } p \equiv 5 \pmod{8} \\
p-3 & \mbox{if } p \equiv 1 \pmod{8}. \end{cases} \] 
\end{prop}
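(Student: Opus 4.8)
The plan is to follow the proof of the corresponding statement in the $\Delta = 4p$ case, paying attention to one sign change. First I would record that, since $D = |\Delta| = 4p$, the polynomial $\mFe(x)$ has degree exactly $D/2 - 1 = 2p-1$: its top coefficient is $\left(\frac{-4p}{D-1}\right) = \chi_\Delta(-1) = \pm 1 \neq 0$ by Definition~\ref{def:modified_Fekete}. The propositions proved earlier in this subsection compute $\tilde{r}_\Delta(\Phi_1) = 1$, $\tilde{r}_\Delta(\Phi_2) = 2$, $\tilde{r}_\Delta(\Phi_p) = 1$, and $\tilde{r}_\Delta(\Phi_6) = 1$ when $p \equiv 5 \pmod 8$ (and $0$ when $p \equiv 1 \pmod 8$); hence the denominator appearing in the definition of $f_\Delta$ divides $\mFe(x)$ exactly, so $f_\Delta(x) \in \Z[x]$, and the degree is obtained by subtracting degrees. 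Using $\deg \Phi_1 = \deg \Phi_2 = 1$, $\deg \Phi_6 = 2$, and $\deg \Phi_p = p-1$, one gets $\deg f_\Delta = (2p-1) - (1 + 2 + (p-1)) = p-3$ when $p \equiv 1 \pmod 8$ and $\deg f_\Delta = (2p-1) - (1+2+2+(p-1)) = p-5$ when $p \equiv 5 \pmod 8$.

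For reciprocity I would use the elementary fact that if $A, B \in \Z[x]$ are nonzero of degrees $a, b$ with $x^a A(1/x) = \varepsilon_A A(x)$ and $x^b B(1/x) = \varepsilon_B B(x)$ for signs $\varepsilon_A, \varepsilon_B \in \{\pm 1\}$, and $B \mid A$, then $C = A/B$ satisfies $x^{a-b} C(1/x) = \varepsilon_A \varepsilon_B\, C(x)$; also, multiplying by a nonzero constant leaves such a relation unchanged. Apply this with $A = \mFe$: since $\Delta = -4p < 0$, the character $\chi_\Delta$ is odd, so Proposition~\ref{prop:reciprocal1} gives $x^{2p-1}\mFe(1/x) = \chi_\Delta(-1)\mFe(x) = -\mFe(x)$, i.e. $\varepsilon_A = -1$. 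For $B$ the denominator, Lemma~\ref{lem:reciprocal} says each $\Phi_n$ with $n \geq 2$ is reciprocal, whereas $\Phi_1(x) = x - 1$ satisfies $x\,\Phi_1(1/x) = -\Phi_1(x)$; since in both cases the denominator contains exactly one factor of $\Phi_1$, we get $\varepsilon_B = -1$. Hence $\varepsilon_A \varepsilon_B = 1$ and $f_\Delta$ is genuinely reciprocal (the overall minus sign in the definition, which only serves to make $f_\Delta$ monic, is irrelevant here). As a consistency check, $f_\Delta(\pm 1) \neq 0$ since all factors of $\Phi_1$ and $\Phi_2$ have been divided out, and a reciprocal polynomial not vanishing at $-1$ must have even degree --- in agreement with $p - 3$ and $p - 5$ both being even.

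There is no real obstacle: once the multiplicity computations of the previous subsection are in place, the argument is purely formal. The single point to watch, and the only substantive difference from the $\Delta = 4p$ case, is that here $\chi_\Delta$ is odd, so $\mFe$ is anti-reciprocal rather than reciprocal; reciprocity of $f_\Delta$ then relies on the cancellation of this sign against the anti-reciprocity of the unique factor $\Phi_1$ in the denominator.
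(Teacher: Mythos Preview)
Your proof is correct and follows the same route as the paper, which simply cites Lemma~\ref{lem:reciprocal} and Proposition~\ref{prop:reciprocal1}. In fact your argument is more careful than the paper's terse citation: since $\chi_\Delta$ is odd here, $\mFe$ is only anti-reciprocal, and you correctly observe that this sign is cancelled by the single factor of $\Phi_1$ (rather than $\Phi_1^2$) in the denominator---a point the paper's one-line proof leaves implicit.
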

\begin{proof}
This follows from Lemma \ref{lem:reciprocal} and Proposition \ref{prop:reciprocal1}. 
\end{proof}

As before, we define $g_{\Delta}(x) \in \Z[x]$ to be the trace polynomial of $f_{\Delta}(x)$; namely 

\[ f_{\Delta}(x) = x^{\frac{\deg(f_{\Delta})}{2}} g_{\Delta}(x+\frac{1}{x}) .\]  


\section{ Generalized Fekete polynomial with $\Delta =3p$, $p \equiv 3 \pmod{4}$}
\label{sec:3p}
In this section, we discuss the case $\Delta=3p$ with $p \equiv 3 \pmod{4}.$ We first have the following simple observation. 
\begin{prop}
$r_\Delta(\Phi_p)=1$.
\end{prop}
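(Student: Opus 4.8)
The plan is to mimic the argument used in the analogous propositions for $\Delta=4p$ and $\Delta=-4p$: show that $\zeta_p$ is a root of $F_\Delta(x)$ (which is automatic since $p\mid D=3p$ and $p\neq 3p$, by the Gauss sum vanishing recalled in Section~3), and then verify that $F_\Delta'(\zeta_p)\neq 0$ by grouping the terms of the derivative according to residue classes modulo $p$ and invoking the $\Q$-linear independence of $\{\zeta_p,\zeta_p^2,\dots,\zeta_p^{p-1}\}$ from Lemma~\ref{lem:cyclotomic_basis_square_free}.

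Concretely, I would start from $F_\Delta'(x)=\sum_{a=1}^{3p-1}a\left(\frac{3p}{a}\right)x^{a-1}$, so that
\[
\zeta_p F_\Delta'(\zeta_p)=\sum_{a=1}^{3p-1}a\left(\tfrac{3p}{a}\right)\zeta_p^a=\sum_{m=1}^{p}\Bigl[\sum_{\substack{1\le a\le 3p-1\\ a\equiv m\ (p)}}a\left(\tfrac{3p}{a}\right)\Bigr]\zeta_p^m=\sum_{m=1}^{p}A_m\zeta_p^m,
\]
and then compute $A_m$ for $(m,p)=1$ by writing $a=m,m+p,m+2p$. Using $\left(\frac{3p}{a}\right)=\left(\frac{3}{a}\right)\left(\frac{p}{a}\right)$ and quadratic reciprocity (with $p\equiv 3\pmod 4$) to convert $\left(\frac{p}{a}\right)$ into $\left(\frac{a}{p}\right)$ up to an explicit sign, one finds $\left(\frac{3p}{m+jp}\right)=\left(\frac{m}{p}\right)\cdot(\pm 1)\cdot\left(\frac{m+jp}{3}\right)$-type expressions; the point is that the Legendre-symbol-mod-$p$ part is constant in $j$, so
\[
A_m=\left(\tfrac{m}{p}\right)\Bigl[m\sum_{j=0}^{2}(\pm)\left(\tfrac{m+jp}{3}\right)+p\sum_{j=0}^{2}(\pm)\left(\tfrac{m+jp}{3}\right)j\Bigr].
\]
Since $\{m,m+p,m+2p\}$ runs over a complete residue system mod $3$, the sum $\sum_j \left(\frac{m+jp}{3}\right)$ vanishes, killing the first bracket, and one is left with $A_m = p\left(\frac{m}{p}\right)\cdot(\text{small nonzero integer combination})$. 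If $F_\Delta'(\zeta_p)=0$ then $A_m=0$ for all $m$ coprime to $p$ by linear independence; I would then derive a contradiction, either by summing a suitable combination of the $A_m$ or — in the style of the proof of Theorem~\ref{prop:negative_big_divisor} — by relating $\sum_m A_m$ to $F_\Delta'(1)=\sum_{a}a\left(\frac{3p}{a}\right)$, which is nonzero by Lemma~\ref{lem:mult}(a) whenever $\chi_\Delta$ is odd, and handling the even case via Proposition~\ref{prop:mult_of_1} or a direct evaluation of the residual constant.

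The main obstacle I anticipate is the bookkeeping of signs coming from quadratic reciprocity and from the value of $\left(\frac{3}{p}\right)$: because $\Delta=3p$ with $p\equiv 3\pmod 4$ we have $\Delta>0$ but $\chi_\Delta$ may be even or odd depending on $p\bmod 3$, so the "small nonzero integer combination" appearing in $A_m$ must be pinned down carefully (it will be $\pm 3$ or $\pm 1$ essentially, and never $0$ modulo $p$ once $p>3$). Once that constant is shown to be a fixed nonzero integer independent of $m$, the linear-independence step forces $\left(\frac{m}{p}\right)=0$ for all $m$ coprime to $p$, which is absurd; alternatively the $F_\Delta'(1)\neq 0$ route sidesteps evaluating the constant explicitly. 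I expect the cleanest writeup to follow the $F'_\Delta(1)$ contradiction used in Theorem~\ref{prop:negative_big_divisor}, reducing the whole proof to a couple of lines once the residue-class regrouping is in place.
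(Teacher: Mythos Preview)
Your direct-computation approach is sound and is precisely the special case $d=3$, $n=p$ of the argument inside Theorem~\ref{prop:positive_big_divisor}; the paper's entire proof of this proposition is the single sentence ``This follows from Proposition~\ref{prop:positive_big_divisor}.'' So you are re-deriving by hand a result already established in Section~5 rather than invoking it.

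Two corrections to your sketch, one of which closes off a route you proposed. First, $\chi_\Delta$ is \emph{always} even here, since $\Delta=3p>0$ gives $\chi_\Delta(-1)=\left(\tfrac{\Delta}{-1}\right)=1$; its parity does not depend on $p\bmod 3$. Consequently $F_\Delta'(1)=0$ by Lemma~\ref{lem:mult}(a), so the ``cleanest'' route you suggest at the end --- the $F_\Delta'(1)\neq 0$ contradiction in the style of Theorem~\ref{prop:negative_big_divisor} --- genuinely fails in this case; that theorem really needs $\Delta<0$. Second, in the direct route the residual factor in $A_m$ is not independent of $m$: writing $j_1,j_2\in\{0,1,2\}$ for the indices with $m+j_1 p\equiv 1$ and $m+j_2 p\equiv 2\pmod 3$, one finds $A_m=p\left(\tfrac{m}{p}\right)(j_1-j_2)$, and $j_1-j_2\in\{\pm 1,\pm 2\}$ varies with $m\bmod 3$. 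It is, however, always nonzero, which is all you need, so the direct argument goes through once this is pinned down.
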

\begin{proof} This follows from Proposition~\ref{prop:positive_big_divisor}
\end{proof}

\begin{prop} \label{prop:3p:zeta_3}
Let $p\equiv 3 \pmod{4}$ be a prime number.
\begin{enumerate}
\item[(a)] The number $\zeta_3$ is a double root of $F_\Delta$ if $p\equiv 2\pmod 3$ and a simple root if $p\equiv 1\pmod 3$. In other words,
$
r_\Delta(\Phi_3)=\begin{cases}
2 &\text{ if } p\equiv 2\pmod 3\\
1 &\text{ if } p\equiv 1\pmod 3.
\end{cases}
$
\item[(b)] The number $-\zeta_3$ is a simple root of $F_\Delta$ if $p\equiv 2\pmod 3$ and it is not a root if $p\equiv 1\pmod 3$. In other words,
$
r_\Delta(\Phi_6)=\begin{cases}
1 &\text{ if } p\equiv 2\pmod 3\\
0 &\text{ if } p\equiv 1\pmod 3.
\end{cases}
$
\end{enumerate}
\end{prop}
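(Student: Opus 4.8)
The plan is to compute $F_\Delta(\zeta_3)$, $F_\Delta'(\zeta_3)$, $F_\Delta''(\zeta_3)$ and $F_\Delta(-\zeta_3)$, $F_\Delta'(-\zeta_3)$ directly from the definition $F_\Delta(x)=\sum_{a=1}^{3p-1}\left(\frac{3p}{a}\right)x^a$, sorting the terms by their residue modulo $3$ (for evaluation at $\zeta_3$) or modulo $6$ (for evaluation at $-\zeta_3$). For each residue class $r$ one writes $a = 3k+r$ and uses the splitting $\left(\frac{3p}{3k+r}\right)=\left(\frac{3}{3k+r}\right)\left(\frac{p}{3k+r}\right)$; since $p\equiv 3\pmod 4$ one then applies quadratic reciprocity to turn $\left(\frac{p}{\cdot}\right)$ into a Kronecker symbol with modulus $p$, picking up a controllable sign. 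The expression then collapses into $\Q$-linear combinations of $1,\zeta_3,\zeta_3^2$, and because $1+\zeta_3+\zeta_3^2=0$ vanishing is equivalent to all three coefficients being equal.

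For part (a): evaluating $F_\Delta(\zeta_3)$, the coefficient of each power of $\zeta_3$ becomes a sum of Legendre symbols $\left(\frac{\cdot}{p}\right)$ over an arithmetic progression, i.e. a "character sum over an interval." When $p\equiv 1\pmod 3$ I expect the relevant interval sums to differ (using the class-number formulas of Berndt cited earlier, e.g. \cite[Corollary 3.4, Theorem 6.1]{[Berndt]} and the partial-sum formulas in \cite{[MTT3]}), so $F_\Delta(\zeta_3)=0$ but $F_\Delta'(\zeta_3)\neq 0$, giving $r_\Delta(\Phi_3)=1$; here the derivative computation is analogous to the $\mFe'(-\zeta_3)$ computations in Sections 7–8, and a parity (mod $2$) argument on a count of integers in a progression should settle nonvanishing cleanly, as it did there. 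When $p\equiv 2\pmod 3$ I expect $F_\Delta(\zeta_3)=F_\Delta'(\zeta_3)=0$ and must then show $F_\Delta''(\zeta_3)\neq 0$; expanding $F_\Delta''(\zeta_3)=\sum a(a-1)\left(\frac{3p}{a}\right)\zeta_3^{a-2}$ and reducing to interval sums of $\left(\frac{u}{p}\right)u$ and $\left(\frac{u}{p}\right)u^2$, I would express these via generalized Bernoulli numbers $B_{1,\chi}$, $B_{2,\chi}$ and Dirichlet $L$-values as in the $\Delta=4p$ case, and invoke their nonvanishing (\cite[Theorem 2]{[Iwasawa]}, \cite[Theorem 13.x]{[Berndt]}).

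For part (b): $F_\Delta(-\zeta_3)$ is handled the same way but binning $a$ modulo $6$; the extra sign $(-1)^a$ interacts with $\left(\frac{3}{a}\right)$ and the parity of $a$, and the resulting interval sums are again Berndt-type sums over sixths of $[0,p]$. When $p\equiv 1\pmod 3$ I expect one of the three coefficients (hence all, by $1+\zeta_3+\zeta_3^2=0$ together with the total sum $F_\Delta(-1)$, whose value is pinned down by Proposition~\ref{prop:roots_1} and Equation~\eqref{eq:F(-1)}) to be nonzero, so $-\zeta_3$ is not a root. When $p\equiv 2\pmod 3$ I expect $F_\Delta(-\zeta_3)=0$, and nonvanishing of $F_\Delta'(-\zeta_3)$ should again follow from a mod-$2$ count of the type $\sum_{a\equiv r\pmod 6}1 \equiv$ (something) $\not\equiv 0$, exactly parallel to the $G\equiv 4k+1\pmod 2$ argument in the $\Delta=4p$, $p\equiv 7\pmod 8$ case.

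The main obstacle is bookkeeping rather than conceptual: correctly tracking the sign from reciprocity and from $(-1)^a$ in each of the (up to) six residue classes, and matching the resulting interval character sums to the precise Berndt/Iwasawa evaluations so that the "equal coefficients" criterion can be checked. I expect the genuinely delicate point to be the second-derivative computation $F_\Delta''(\zeta_3)\neq 0$ in the $p\equiv 2\pmod 3$ case, where one cannot get away with a pure parity argument and must instead reduce to an explicit $L$-value (or Bernoulli number) known to be nonzero; the simple-root assertions should all reduce, as elsewhere in the paper, to a congruence modulo $2$.
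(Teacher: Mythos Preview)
Your overall strategy---sort terms of $F_\Delta$ by residue modulo $3$ (or $6$), apply reciprocity, and reduce to interval character sums---matches the paper, and your expectations for part~(a), $p\equiv 1\pmod 3$ (a parity argument on $A$) and for showing $A=B=0$ when $p\equiv 2\pmod 3$ (class-number formulas from \cite{[Girstmair]}, \cite{[Berndt]}) are exactly what the paper does.

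However, you substantially overestimate the difficulty of the two ``hard'' nonvanishing steps. The key structural observation you missed is that $F_\Delta(x)=\sum_{a}\chi_\Delta(a)x^a$ has \emph{no} terms with $a\equiv 0\pmod 3$, because $\chi_\Delta(a)=0$ there. Consequently, after differentiating and evaluating at $\zeta_3$ or $-\zeta_3$, the result lies in the $\Q$-span of only \emph{two} of the three powers $1,\zeta_3,\zeta_3^2$, not all three. Concretely, the paper writes $F_\Delta''(\zeta_3)=C+D\zeta_3^2$ and observes that $C+D=F_\Delta''(1)$, which is already known to be nonzero by Proposition~\ref{prop:mult_of_1}; hence $C,D$ cannot both vanish and $F_\Delta''(\zeta_3)\neq 0$. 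No $L$-values, no Bernoulli numbers, no second-moment interval sums are needed. The same trick disposes of $F_\Delta'(-\zeta_3)$ in part~(b), $p\equiv 2\pmod 3$: one has $F_\Delta'(-\zeta_3)=E+F\zeta_3$ with $E+F=F_\Delta'(-1)\neq 0$ by Proposition~\ref{prop: x=-1}, so again one of $E,F$ is nonzero. Your proposed route via $B_{2,\chi}$ or a further mod-$2$ count would presumably also work, but it is considerably more labor than the paper's one-line reduction to the already-established multiplicities at $x=\pm 1$.

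For part~(b), $p\equiv 1\pmod 3$, the paper shows $F_\Delta(-\zeta_3)\neq 0$ by a direct congruence $A\equiv 2\pmod 4$, close in spirit to the parity arguments you anticipated.
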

\begin{proof}
(a) One has 
$F'_\Delta(x)=\sum\limits_{k=1}^{p}(3k-2)\left(\dfrac{3p}{3k-2}\right)x^{3k-3}+\sum\limits_{k=1}^{p}(3k-1)\left(\dfrac{3p}{3k-1}\right)x^{3k-2}.$
Hence 
\[
F'_\Delta(\zeta_3)=\sum\limits_{k=1}^{p}(3k-2)\left(\dfrac{3p}{3k-2}\right)+\zeta_3\sum\limits_{k=1}^{p}(3k-1)\left(\dfrac{3p}{3k-1}\right)=A+B\zeta_3, 
\]
where
\[
\begin{aligned}
A&=\sum\limits_{k=1}^{p}(3k-2)\left(\dfrac{3p}{3k-2}\right)=\sum\limits_{k=1}^{p}(3k-2)\left(\dfrac{3k-2}{p}\right)\\
B&=\sum\limits_{k=1}^{p}(3k-1)\left(\dfrac{3p}{3k-1}\right)=\sum\limits_{k=1}^{p}(3k-1)\left(\dfrac{3k-1}{p}\right).
\end{aligned}
\]
Suppose that $p\equiv 1\pmod 3$. Then $p=12n+7$ for some integer $n$. Note that $3k-2$ is odd if and only if $k=2l+1$ for some $l$. In this case $3k-2=6l+1$. Hence 
\[
A\equiv \sum\limits_{l=0}^{6n+3} \left(\dfrac{6l+1}{p}\right)
\pmod 2.
\]
 Note also that the set $\{6l+1 \}_{l=0}^{6n+3}$ contains exactly one element which is divisible by $p$, namely the element $6(2n+1)+1=p$. Hence the set $\{\left(\frac{6l+1}{p} \right) \}_{l=0}^{6n+3}$ has exactly one element equal to $0$. The other elements must be $\pm{1}$. Thus
 \[
 A\equiv \sum\limits_{l=0}^{6n+3}\left(\dfrac{6l+1}{p}\right)\equiv 6n+3\equiv 1\pmod 2.
 \]
 Therefore $A\not=0$ and $F'_\Delta(\zeta_3)\not=0$. 
 
 Now suppose that $p\equiv 2\pmod 3$. Then 
 \[
\begin{aligned} A&=\sum_{\stackrel{a\equiv 1\pmod 3}{0<a<p}} a\left(\dfrac{a}{p}\right) + \sum_{\stackrel{a\equiv 1\pmod 3}{p<a<2p}} a\left(\dfrac{a}{p}\right)+\sum_{\stackrel{a\equiv 1\pmod 3}{2p<a<3p}} a\left(\dfrac{a}{p}\right)\\
&=\sum_{\stackrel{a\equiv 1\pmod 3}{0<a<p}} a\left(\dfrac{a}{p}\right) + \sum_{\stackrel{a\equiv 2\pmod 3}{0<a<p}} (p+a)\left(\dfrac{p+a}{p}\right)+\sum_{\stackrel{a\equiv 0\pmod 3}{0<a<p}} (2p+a)\left(\dfrac{2p+a}{p}\right)\\
&=\sum\limits_{a=1}^{p-1}a\left(\dfrac{a}{p}\right)+p\left[\sum_{\stackrel{a\equiv 2\pmod 3}{0<a<p}} \left(\dfrac{a}{p}\right) +\sum_{\stackrel{a\equiv 0\pmod 3}{0<a<p}} \left(\dfrac{a}{p}\right)\right]+p\sum_{\stackrel{a\equiv 0\pmod 3}{0<a<p}} \left(\dfrac{a}{p}\right).
\end{aligned}
 \]
 One has
 $
  \sum\limits_{a=1}^{p-1}a\left(\dfrac{a}{p}\right)=-ph(-p) \quad  (\text{see \cite[Equation 3]{[Girstmair]}}),
 $
 and
 \[
 \sum_{\stackrel{a\equiv 2\pmod 3}{0<a<p}} \left(\dfrac{a}{p}\right) +\sum_{\stackrel{a\equiv 0\pmod 3}{0<a<p}} \left(\dfrac{a}{p}\right)=
 \sum_{\stackrel{a\equiv 2\pmod 3}{0<a<p}} \left(\dfrac{a}{p}\right) +\sum_{\stackrel{a\equiv 2\pmod 3}{0<a<p}} \left(\dfrac{p-a}{p}\right)=0,
 \]
 and
\[
\begin{aligned}\sum_{\stackrel{a\equiv 0\pmod 3}{0<a<p}} \left(\dfrac{a}{p}\right)&=\left(\dfrac{3}{p}\right)\sum\limits_{0<a<p/3}\left(\dfrac{a}{p}\right)
=\dfrac{1}{2}\left(3-\left(\dfrac{3}{p}\right)\right)h(-p)=h(-p) \;(\text{\cite[Corollary 4.3]{[Berndt]}}).
\end{aligned}
\]
 Hence $A=0$. On the other hand, $F'_\Delta(1)=A+B=0$. Thus $B=0$ and $F'_\Delta(\zeta_3)=0$.
 Now we are going to show that $F''_\Delta(\zeta_3)\not=0$. 
  We have
  \[
 F''_\Delta(x)
 =\sum\limits_{a=2}^{p}(3a-2)(3a-3)\left(\dfrac{3p}{3a-2}\right)x^{3a-4}+\sum\limits_{a=1}^{p}(3a-1)(3a-2)\left(\dfrac{3p}{3a-1}\right)x^{3a-3}.
 \]
 Hence $F''_\Delta(\zeta_3)$ is of the form $F''_\Delta(\zeta_3)=C+D\zeta^2_3$, for some integers $C$ and $D$. Since $F''_\Delta(1)=C+D\not=0$, this implies that $C$ and $D$ cannot be both 0. Hence $F''_\Delta(\zeta_3)\not=0$. 
 
 (b) Suppose $p\equiv 2\pmod 3$. One has $F'_\Delta(-\zeta_3)$ is of the form $E+F\zeta_3$ for some integers $E,F$. Since $F'_\Delta(-1)=E+F\not=0$, this implies that $E$ and $F$ cannot be both 0. Hence $F'_\Delta(-\zeta_3)\not=0$. 

Now suppose that $p\equiv 1\pmod 3$. Then $p=12n+7$ for some integer $n$. One has $F_\Delta(-\zeta_3)=A\zeta_3+B\zeta_3^2$, where $A$ and $B$ are integers  and 

\[
\begin{aligned}
A&=-\sum_{\substack{a\equiv1 \pmod 6\\ 0<a<3p}} \left(\dfrac{3p}{a}\right)+\sum_{\stackrel{a\equiv 4 \pmod 6}{0<a<3p}}\left(\dfrac{3p}{a}\right)
=-\sum_{\substack{a\equiv1 \pmod 6\\ 0<a<3p}} \left(\dfrac{a}{p}\right)+\sum_{\stackrel{a\equiv 4 \pmod 6}{0<a<3p}}\left(\dfrac{a}{p}\right)\\
&=-\sum_{\substack{a\equiv 0,1,5 \pmod 6\\ 0<a<p}}\left(\dfrac{a}{p}\right)+ \sum_{\substack{a\equiv 2,3,4 \pmod 6\\ 0<a<p}}\left(\dfrac{a}{p}\right)=2\sum_{\substack{a\equiv 2 \pmod 6\\ 0<a<p}}\left(\dfrac{a}{p}\right)\\
&\equiv 2(2n+1)\equiv 2\pmod 4.
\end{aligned}
\]
Hence $A\not=0$ and $F_\Delta(-\zeta_3)\not=0$.
\end{proof}

\subsection{The Fekete polynomials $f_{\Delta}(x)$ and $g_{\Delta}(x)$}
We introduce the following definition. 
\begin{definition}
Let $\Delta =3p$ with $p \equiv 3 \pmod{4}.$ The Fekete polynomial $f_{\Delta}(x)$ is given by the following formula 
\[ f_{\Delta}(x)= \begin{cases} \dfrac{F_\Delta(x)}{x\Phi_1(x)^2 \Phi_2(x) \Phi_3(x)^2\Phi_6(x) \Phi_p(x) } &\mbox{if $p \equiv 2 \pmod{3}$} \\
\dfrac{F_\Delta(x)}{x\Phi_1(x)^2 \Phi_2(x) \Phi_3(x)\Phi_p(x)} & \mbox{if $p \equiv 1 \pmod{3}$}. \end{cases} \] 
\end{definition}

We have the following observation. 
\begin{prop}
$f_{\Delta}(x)$ is a reciprocal polynomial of degree 
\[ \deg(f_{\Delta})= \begin{cases} 2(p-5) &\mbox{if } p \equiv 2 \pmod{3} \\
2(p-3) & \mbox{if } p \equiv 1 \pmod{3}. \end{cases} \] 
\end{prop}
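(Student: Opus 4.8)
The plan is to split the claim into the degree count and the reciprocity, both of which are formal consequences of results already established. I would first check that the denominator in the definition of $f_\Delta(x)$ really divides $F_\Delta(x)$, so that $f_\Delta(x)$ is a polynomial: $x \mid F_\Delta(x)$ because the constant term is $0$; $\Phi_1(x)^2 \mid F_\Delta(x)$ by Proposition~\ref{prop:mult_of_1} (here $\chi_\Delta$ is even since $\Delta = 3p > 0$); $\Phi_2(x) \mid F_\Delta(x)$ by Proposition~\ref{prop: x=-1}; $\Phi_p(x) \mid F_\Delta(x)$ by Proposition~\ref{prop:positive_big_divisor}; and $\Phi_3(x)^2\Phi_6(x) \mid F_\Delta(x)$ when $p \equiv 2 \pmod 3$, respectively $\Phi_3(x) \mid F_\Delta(x)$ when $p \equiv 1 \pmod 3$, by Proposition~\ref{prop:3p:zeta_3}. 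These factors, together with $x$, are pairwise coprime, so their product divides $F_\Delta(x)$.

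For the degree, note that $\chi_\Delta(3p-1) = \chi_\Delta(-1) = 1 \neq 0$, so $\deg F_\Delta = 3p-1$. When $p \equiv 2 \pmod 3$ the denominator has degree $1 + 2 + 1 + 4 + 2 + (p-1) = p+9$, whence $\deg f_\Delta = (3p-1) - (p+9) = 2(p-5)$; when $p \equiv 1 \pmod 3$ it has degree $1 + 2 + 1 + 2 + (p-1) = p+5$, whence $\deg f_\Delta = (3p-1)-(p+5) = 2(p-3)$.

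For reciprocity, write $F_\Delta(x) = x\,H(x)$, so $\deg H = 3p-2$. Proposition~\ref{prop:reciprocal1} with $D = 3p$ and $\chi_\Delta(-1) = 1$ gives $x^{3p}F_\Delta(1/x) = F_\Delta(x)$, and dividing by $x$ yields $x^{3p-2}H(1/x) = H(x)$, i.e.\ $H$ is reciprocal. By Lemma~\ref{lem:reciprocal}, $\Phi_1(x)^2$ and each $\Phi_n(x)$ with $n \geq 2$ are reciprocal, hence so is the product $Q(x)$ of the cyclotomic factors in the denominator (this product equals the denominator divided by $x$). Finally, from $H(x) = Q(x)\,f_\Delta(x)$ one gets $x^{\deg H}H(1/x) = \bigl(x^{\deg Q}Q(1/x)\bigr)\bigl(x^{\deg f_\Delta}f_\Delta(1/x)\bigr)$; cancelling the reciprocity relations for $H$ and $Q$ leaves $x^{\deg f_\Delta}f_\Delta(1/x) = f_\Delta(x)$, so $f_\Delta$ is reciprocal. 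There is no genuine obstacle here — the argument mirrors the $\Delta = 4p$ and $\Delta = -4p$ cases treated earlier — the only points requiring care being that $\chi_\Delta$ is even (which is what makes $F_\Delta/x$ reciprocal rather than anti-reciprocal) and the degree bookkeeping.
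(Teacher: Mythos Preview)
Your proof is correct and follows the same approach as the paper, which simply cites Lemma~\ref{lem:reciprocal} and Proposition~\ref{prop:reciprocal1}; you have just made the degree bookkeeping and the divisibility checks explicit. The only cosmetic point is that ``dividing by $x$'' is really the substitution $F_\Delta(x)=xH(x)$ followed by cancellation, but the computation is fine.
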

\begin{proof}
This follows from Lemma \ref{lem:reciprocal} and Proposition \ref{prop:reciprocal1}. 
\end{proof}

As before, we define $g_{\Delta}(x) \in \Z[x]$ to be the trace polynomial of $f_{\Delta}(x)$; namely 

\[ f_{\Delta}(x) = x^{\frac{\deg(f_{\Delta})}{2}} g_{\Delta}(x+\frac{1}{x}) .\]  

\section{Generalized Fekete polynomial with $\Delta=-3p$, $p \equiv 1 \pmod{4}$.}
\label{sec:-3p}
In this section, we discuss the case $\Delta=-3p$ with $p \equiv 1 \pmod{4}.$ We first have the following observation. 
\begin{prop}
$r_\Delta(\Phi_p)=r_\Delta(\Phi_3)=1$.
\end{prop}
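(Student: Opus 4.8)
The plan is to deduce both equalities directly from Theorem~\ref{prop:negative_big_divisor}, the general fact that $\zeta_n$ is a simple root of $F_\Delta$ whenever $\Delta<0$ and $n$ is a squarefree proper divisor of $|\Delta|$.

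First I would check that the hypotheses apply. Since $p\equiv 1\pmod 4$ is prime (so in particular $p\neq 3$), the integer $-3p$ is squarefree and satisfies $-3p\equiv -3\equiv 1\pmod 4$; hence $\Delta=-3p$ is a fundamental discriminant, $D=|\Delta|=3p$, and $\chi_\Delta$ is an odd character, being the character attached to an imaginary quadratic field. Both $n=3$ and $n=p$ are then squarefree divisors of $D=3p$ strictly smaller than $D$, i.e. squarefree proper divisors; in particular the vanishing $F_\Delta(\zeta_3)=F_\Delta(\zeta_p)=0$ is already known from the Gauss sum discussion in Section 3.

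Then I would simply invoke Theorem~\ref{prop:negative_big_divisor} twice, once with $n=p$ and once with $n=3$, to obtain $r_\Delta(\Phi_p)=1$ and $r_\Delta(\Phi_3)=1$ respectively. There is no genuine obstacle here: the only non-formal ingredient buried in Theorem~\ref{prop:negative_big_divisor} is that $F'_\Delta(1)\neq 0$, which is Proposition~\ref{prop:mult_of_1} in the case $\chi_\Delta$ odd, and $\chi_\Delta$ is odd precisely because $\Delta<0$. It is worth emphasizing the contrast with the case $\Delta=3p$ of the previous section, where $\Phi_3$ may occur with multiplicity $2$: that phenomenon is excluded here exactly by the sign of $\Delta$, via Theorem~\ref{prop:negative_big_divisor}.
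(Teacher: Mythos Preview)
Your proposal is correct and takes essentially the same approach as the paper: the paper's own proof is simply the one-line invocation of Proposition~\ref{prop:negative_big_divisor}, and you have just spelled out the hypothesis check (that $\Delta=-3p<0$ and that $3$ and $p$ are squarefree proper divisors of $D=3p$) in more detail.
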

\begin{proof}
This follows from Prop~\ref{prop:negative_big_divisor}.
\end{proof}

\begin{prop}
Let $p\equiv 1\mod 4$ be a prime number.
 The number $\zeta_3$ is a simple root of $F_\Delta$. In other words,
  $r_\Delta(\zeta_3)=1$.
\end{prop}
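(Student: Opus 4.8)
The plan is to carry out the standard simple-root computation, which here is just the $n=3$ instance of Theorem~\ref{prop:negative_big_divisor} (note that $3$ is a squarefree proper divisor of $\Delta=-3p$). First I would record that $F_\Delta(\zeta_3)=0$: since $p\equiv 1\pmod{4}$ we have $\Delta=-3p\equiv 1\pmod{4}$, so $D=|\Delta|=3p$; because $3\mid D$ and $3\neq D$, the vanishing of the Gauss sum $G(b,\chi_\Delta)$ for $\gcd(b,D)>1$ gives $F_\Delta(\zeta_3)=0$. It therefore remains to prove $F_\Delta'(\zeta_3)\neq 0$.

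Next I would write
\[
\zeta_3 F_\Delta'(\zeta_3)=\sum_{a=1}^{3p-1} a\left(\frac{-3p}{a}\right)\zeta_3^{a}
\]
and group the terms by the residue of $a$ modulo $3$. Since $3\mid\Delta$, the Kronecker symbol $\left(\frac{-3p}{a}\right)$ vanishes whenever $3\mid a$, so only the classes $a\equiv 1$ and $a\equiv 2\pmod{3}$ survive, and
\[
\zeta_3 F_\Delta'(\zeta_3)=A\zeta_3+B\zeta_3^{2},\qquad
A=\sum_{\substack{1\le a\le 3p-1\\ a\equiv 1\pmod{3}}} a\left(\frac{-3p}{a}\right),\qquad
B=\sum_{\substack{1\le a\le 3p-1\\ a\equiv 2\pmod{3}}} a\left(\frac{-3p}{a}\right),
\]
with $A,B\in\Z$. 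Because $\{\zeta_3,\zeta_3^{2}\}$ (equivalently $\{1,\zeta_3\}$) is $\Q$-linearly independent, $F_\Delta'(\zeta_3)=0$ would force $A=B=0$.

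Finally I would rule this out exactly as in the proof of Theorem~\ref{prop:negative_big_divisor}: adding the two sums and using once more that the terms with $3\mid a$ vanish,
\[
A+B=\sum_{a=1}^{3p-1} a\left(\frac{-3p}{a}\right)=F_\Delta'(1)=\sum_{a=1}^{D-1}\chi_\Delta(a)\,a.
\]
Since $\Delta=-3p<0$, the character $\chi_\Delta$ is odd, so Lemma~\ref{lem:mult}(a) — equivalently Proposition~\ref{prop:mult_of_1}, whose proof gives $F_\Delta'(1)=DB_{1,\chi_\Delta}\neq 0$ — shows $A+B\neq 0$. Hence $A$ and $B$ cannot both vanish, so $\zeta_3 F_\Delta'(\zeta_3)\neq 0$; as $\zeta_3\neq 0$ this gives $F_\Delta'(\zeta_3)\neq 0$, and $\zeta_3$ is a simple root of $F_\Delta$.

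There is no genuine obstacle here: the statement is the $n=3$ special case of Theorem~\ref{prop:negative_big_divisor}, and one could simply invoke that result. The only two points worth making explicit are (i) that every term with $3\mid a$ drops out, so that $\zeta_3 F_\Delta'(\zeta_3)$ really is a $\Z$-combination of $\zeta_3$ and $\zeta_3^{2}$ and the linear-independence reduction is valid, and (ii) that $\chi_{-3p}$ is odd because $\Delta<0$, which is what makes Lemma~\ref{lem:mult}(a) applicable; I would write out the short direct argument above for the reader's convenience, since the quantities $A$ and $B$ are also the natural inputs for the analysis of the remaining small cyclotomic factors.
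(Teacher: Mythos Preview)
Your proof is correct. You recognize that this is exactly the $n=3$ case of Theorem~\ref{prop:negative_big_divisor}: after grouping $\zeta_3 F_\Delta'(\zeta_3)=A\zeta_3+B\zeta_3^2$ with integer $A,B$ (the residue class $a\equiv 0\pmod 3$ drops out since $3\mid D$), the $\Q$-linear independence of $\{\zeta_3,\zeta_3^2\}$ reduces the question to ruling out $A=B=0$, which follows from $A+B=F_\Delta'(1)\neq 0$ because $\chi_{-3p}$ is odd.

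The paper's own proof of this proposition takes a different, more laborious route: rather than using only the sum $A+B$, it shows directly that $A\neq 0$ by a case split on $p\bmod 3$. When $p\equiv 2\pmod 3$ a parity argument modulo~$2$ forces $A$ to be odd; when $p\equiv 1\pmod 3$ the sum is rewritten and evaluated explicitly as $A=-\tfrac{3p}{2}\,h(-3p)$ via a class-number identity. Your argument is shorter and cleaner, and indeed the paper itself records this shortcut in the immediately preceding proposition by simply citing Theorem~\ref{prop:negative_big_divisor}. What the paper's longer computation buys is an \emph{explicit} value for $A$ (in terms of $h(-3p)$), information that goes beyond mere nonvanishing; if one only wants $r_\Delta(\Phi_3)=1$, your approach is the right one.
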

\begin{proof}
 One has 
$F'_\Delta(x)=\sum\limits_{k=1}^{p}(3k-2)\left(\dfrac{3p}{3k-2}\right)x^{3k-3}+\sum\limits_{k=1}^{p}(3k-1)\left(\dfrac{3p}{3k-1}\right)x^{3k-2}.$
Hence 
\[
F'_\Delta(\zeta_3)=\sum\limits_{k=1}^{p}(3k-2)\left(\dfrac{3p}{3k-2}\right)+\zeta_3\sum\limits_{k=1}^{p}(3k-1)\left(\dfrac{3p}{3k-1}\right)=A+B\zeta_3, 
\]
where
\[
\begin{aligned}
A&=\sum\limits_{k=1}^{p}(3k-2)\left(\dfrac{3p}{3k-2}\right)=\sum\limits_{k=1}^{p}(3k-2)\left(\dfrac{3k-2}{p}\right),\\
B&=\sum\limits_{k=1}^{p}(3k-1)\left(\dfrac{3p}{3k-1}\right)=\sum\limits_{k=1}^{p}(3k-1)\left(\dfrac{3k-1}{p}\right).
\end{aligned}
\]
Suppose that $p\equiv 2\pmod 3$. Then $p=12n+5$ for some integer $n$. Note that $3k-2$ is odd if and only if $k=2l+1$ for some $l$. In this case $3k-2=6l+1$. Hence 
$
A\equiv \sum\limits_{l=0}^{6n+2} \left(\dfrac{6l+1}{p}\right)
\pmod 2.
$
 Note also that the set $\{6l+1 \}_{l=0}^{6n+2}$ contains no element divisible by $p$. Hence the values of $\{\left(\frac{6l+1}{p} \right) \}, l=0,\ldots, 6n+3$, are  $\pm{1}$. Thus
 \[
 A\equiv \sum\limits_{l=0}^{6n+2}\left(\dfrac{6l+1}{p}\right)\equiv 6n+3\equiv 1\pmod 2.
 \]
 Therefore $A\not=0$ and $F'_\Delta(\zeta_3)\not=0$. 
 
 Now suppose that $p\equiv 1\pmod 3$. Then
 \[
\begin{aligned} A&=\sum_{\stackrel{a\equiv 1\pmod 3}{0<a<p}} a\left(\dfrac{a}{p}\right) + \sum_{\stackrel{a\equiv 1\pmod 3}{p<a<2p}} a\left(\dfrac{a}{p}\right)+\sum_{\stackrel{a\equiv 1\pmod 3}{2p<a<3p}} a\left(\dfrac{a}{p}\right)\\
&=\sum_{\stackrel{a\equiv 1\pmod 3}{0<a<p}} a\left(\dfrac{a}{p}\right) + \sum_{\stackrel{a\equiv 0\pmod 3}{0<a<p}} (p+a)\left(\dfrac{p+a}{p}\right)+\sum_{\stackrel{a\equiv 2\pmod 3}{0<a<p}} (2p+a)\left(\dfrac{2p+a}{p}\right)\\
&=\sum\limits_{a=1}^{p-1}a\left(\dfrac{a}{p}\right)+p\sum_{\stackrel{a\equiv 0\pmod 3}{0<a<p}} \left(\dfrac{a}{p}\right) +2p\sum_{\stackrel{a\equiv 2\pmod 3}{0<a<p}} \left(\dfrac{a}{p}\right).
\end{aligned}
 \]

 One has
 $
 \sum\limits_{a=1}^{p-1}a\left(\dfrac{a}{p}\right)=0 \quad (\text{see \cite[Equation 3]{[Girstmair]}}).
$
 Set 
 \[C=\sum\limits_{0<3a<p} \left(\dfrac{3a}{p}\right),\quad 
 D=\sum_{0<3a+1<p} \left(\dfrac{3a+1}{p}\right),\quad
 E=\sum_{0<3a+2<p} \left(\dfrac{3a+2}{p}\right).
 \]
 Then $C+D+E=0$ and
 $C=\sum\limits_{0<3a<p} \left(\dfrac{3a}{p}\right) =\sum_{0<3a<p} \left(\dfrac{p-3a}{p}\right)=D. 
 $
 Hence $E=-2C$ and $A=p(C+2E)=-3pC$. By \cite[Corollary 4.3]{[Berndt]}, 
 $
 C=\dfrac{1}{2}h(-3p).
 $
 Thus 
 $
 A=-\dfrac{3p}{2}h(-3p)\not=0.
 $
 \end{proof}

\subsection{The Fekete polynomials $f_{\Delta}(x)$ and $g_{\Delta}(x)$}

We introduce the following definition. 
\begin{definition}
Let $\Delta =-3p$ with $p \equiv 1 \pmod{4}.$ The Fekete polynomial $f_{\Delta}(x)$ is given by the following formula 
\[ f_{\Delta}(x)= \dfrac{-F_\Delta(x)}{x\Phi_1(x)  \Phi_3(x) \Phi_p(x)}.   \] 
\end{definition}

We have the following observation. 
\begin{prop}
$f_{\Delta}(x)$ is a reciprocal polynomial of degree 
$2(p-2)$.
\end{prop}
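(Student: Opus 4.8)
The plan is to proceed exactly as in the parallel propositions for $\Delta\in\{4p,-4p,3p\}$: pin down the degree by a direct count, and obtain the reciprocity from the functional equation of $F_\Delta$ (Proposition~\ref{prop:reciprocal1}) together with the reciprocity of cyclotomic polynomials (Lemma~\ref{lem:reciprocal}). First I would record that $\Delta=-3p<0$, so $\chi_\Delta$ is odd and the top coefficient of $F_\Delta$ is $\chi_\Delta(3p-1)=\chi_\Delta(-1)=-1\neq 0$; hence $\deg F_\Delta=3p-1$. Since $\chi_\Delta(1)=1$, the variable $x$ divides $F_\Delta(x)$ exactly once, and by the multiplicity computations of this section $r_\Delta(\Phi_1)=r_\Delta(\Phi_3)=r_\Delta(\Phi_p)=1$; as $x,\Phi_1,\Phi_3,\Phi_p$ are pairwise coprime monic polynomials in $\Z[x]$, their product $x\,\Phi_1(x)\Phi_3(x)\Phi_p(x)$, of degree $1+1+2+(p-1)=p+3$, divides $F_\Delta$ in $\Z[x]$. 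Therefore $f_\Delta(x)$ is a genuine polynomial of degree $(3p-1)-(p+3)=2(p-2)$.

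For the reciprocity I would introduce $G(x):=F_\Delta(x)/x$, a polynomial of degree $3p-2$ with nonzero constant term, so that $f_\Delta(x)=-G(x)/\bigl(\Phi_1(x)\Phi_3(x)\Phi_p(x)\bigr)$. Since $\chi_\Delta$ is odd, Proposition~\ref{prop:reciprocal1} gives $x^{3p}F_\Delta(1/x)=-F_\Delta(x)$, which rewrites (dividing through the extra factor of $x$) as $x^{3p-2}G(1/x)=-G(x)$, i.e.\ $G$ is anti-reciprocal of degree $3p-2$. By Lemma~\ref{lem:reciprocal}, $\Phi_3$ and $\Phi_p$ are reciprocal of degrees $2$ and $p-1$, while $\Phi_1(x)=x-1$ satisfies $x\,\Phi_1(1/x)=-\Phi_1(x)$; hence $P(x):=\Phi_1(x)\Phi_3(x)\Phi_p(x)$, of degree $p+2$, is anti-reciprocal: $x^{p+2}P(1/x)=-P(x)$. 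Writing $G=-f_\Delta\cdot P$, substituting into $x^{3p-2}G(1/x)=-G(x)$, and splitting $x^{3p-2}=x^{2(p-2)}\cdot x^{p+2}$, the two anti-reciprocity relations combine so that $P$ cancels from both sides, leaving $x^{2(p-2)}f_\Delta(1/x)=f_\Delta(x)$. Thus $f_\Delta$ is reciprocal of degree $2(p-2)$.

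There is no real obstacle here; the only points demanding a little care are (i) isolating the single factor of $x$ in $F_\Delta$ so that the functional equation is applied to $G=F_\Delta/x$ rather than to $F_\Delta$ itself, and (ii) noting that $\Phi_1$ contributes a sign $-1$ under $x\mapsto 1/x$, making $P$ anti-reciprocal. It is precisely the cancellation of the sign $\chi_\Delta(-1)=-1$ from Proposition~\ref{prop:reciprocal1} against the sign $-1$ coming from the linear factor $\Phi_1$ that makes $f_\Delta$ come out reciprocal rather than anti-reciprocal. As in the analogous earlier propositions, this whole argument may be compressed into a one-line appeal to Lemma~\ref{lem:reciprocal} and Proposition~\ref{prop:reciprocal1}.
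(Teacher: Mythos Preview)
Your proof is correct and follows essentially the same approach as the paper, which merely cites Lemma~\ref{lem:reciprocal} and Proposition~\ref{prop:reciprocal1}; you have simply spelled out the degree count and the sign bookkeeping that those references encode. In particular, your observation that the single $\Phi_1$ factor makes $P$ anti-reciprocal, matching the sign $\chi_\Delta(-1)=-1$ from Proposition~\ref{prop:reciprocal1}, is exactly the mechanism behind the paper's one-line proof.
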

\begin{proof}
This follows from Lemma \ref{lem:reciprocal} and Proposition \ref{prop:reciprocal1}. 
\end{proof}

As before, we define $g_{\Delta}(x) \in \Z[x]$ to be the trace polynomial of $f_{\Delta}(x)$; namely 

\[ f_{\Delta}(x) = x^{\frac{\deg(f_{\Delta})}{2}} g_{\Delta}(x+\frac{1}{x}) .\]

Our data seems to suggest the following conjectures. 
\begin{conj} If $\Delta<0$ and odd then every irreducible cyclotomic factors of $F_\Delta(x)/x$  is of the form $\Phi_n(x)$ where $n$ is a proper divisor of $\Delta$. Furthermore, these factors have multiplicity one and the polynomial
\[
f_\Delta(x)=\dfrac{F_\Delta(x)}{x\prod_{n\mid \Delta}\phi_n(x)}
\]
is irreducible.
\end{conj}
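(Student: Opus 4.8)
The plan is to break the conjecture into three assertions, writing $D:=|\Delta|=-\Delta$ throughout. Since $\Delta$ is an odd fundamental discriminant it is squarefree with $\Delta\equiv 1\pmod 4$, so every divisor of $D$ is squarefree and, because $\Delta<0$, the character $\chi_\Delta$ is odd. The three assertions are: (I) every cyclotomic factor of $F_\Delta(x)/x$ has the form $\Phi_n(x)$ with $n$ a proper divisor of $D$; (II) each such factor occurs with multiplicity exactly one; (III) the cofactor $f_\Delta(x)=F_\Delta(x)\big/\bigl(x\prod_{n\mid D,\,n<D}\Phi_n(x)\bigr)$ is irreducible over $\Q$. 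Assertion (II), together with the ``containment'' direction of (I), is already within reach: for a proper divisor $n\mid D$ the integer $n$ is squarefree, so Theorem~\ref{prop:negative_big_divisor} gives that $\zeta_n$ is a \emph{simple} root of $F_\Delta$; hence $\prod_{n\mid D,\,n<D}\Phi_n(x)$ divides $F_\Delta(x)/x$, each factor with multiplicity one (for $n=1$ this is consistent with $r_\Delta(\Phi_1)=1$ from Proposition~\ref{prop:mult_of_1}, since $\chi_\Delta$ is odd), while $\Phi_D\nmid F_\Delta$ because $|F_\Delta(\zeta_D)|^2=|G(1,\chi_\Delta)|^2=D\neq 0$. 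What remains is (a) to show that no cyclotomic $\Phi_m$ with $m\nmid D$ divides $F_\Delta(x)/x$, and (b) to prove $f_\Delta$ irreducible.

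For (a) the classical tool is reduction modulo $2$. Since $\chi_\Delta(a)\equiv 1\pmod 2$ when $\gcd(a,D)=1$ and $\equiv 0$ otherwise,
\[
F_\Delta(x)\equiv\sum_{\substack{1\le a\le D-1\\ \gcd(a,D)=1}}x^a\equiv\sum_{e\mid D}\mu(e)\,x^e\,\frac{x^D-1}{x^e-1}\pmod 2 .
\]
If $F_\Delta(\zeta_m)=0$ then $\Phi_m\mid F_\Delta$ in $\Z[x]$, hence $\Phi_m\bmod 2$ divides the right-hand side; as that reduction has degree $<D$, only finitely many $m$ survive, and they are small relative to $D$ (the composite-conductor analogue of the estimates of Section~6, where for $D=p$ prime one is essentially reduced to $m\mid p-1$). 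For each surviving $m$ with $m\nmid D$ one then rules out vanishing by a finer argument: either a second congruence modulo a prime $\ell\mid D$, imitating the binomial trick in the proofs of Section~6, or an exact evaluation of $F_\Delta(\zeta_m)$ obtained by grouping the terms $\chi_\Delta(a)\zeta_m^a$ according to $a\bmod m$, which reduces non-vanishing to non-vanishing of partial quadratic-character sums and can be read off from generalized Bernoulli number and class number identities of the Berndt--Girstmair type used in Sections~4 and 7--10. I expect this step to go through in principle but to require a genuinely uniform idea to handle all composite $D$ at once; for $D\in\{3p,4p\}$ it is exactly the content of the case analyses carried out above.

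For (b) I would exploit the reciprocal structure. Write $f_\Delta(x)=x^{k}g_\Delta(x+x^{-1})$ with $k=\deg g_\Delta$, so $\deg f_\Delta=2k$ and the roots come in pairs $\{\alpha,\alpha^{-1}\}$, none equal to $\pm1$ (because $f_\Delta(\pm1)\neq0$: $\Phi_1$ is divided out with its full multiplicity one, and $\Phi_2\nmid F_\Delta$ by Proposition~\ref{prop:roots_1}). Then $f_\Delta$ is irreducible over $\Q$ if and only if $g_\Delta$ is irreducible and, for a root $\beta$ of $g_\Delta$, the element $\beta^2-4=(\alpha-\alpha^{-1})^2$ is not a square in $\Q(\beta)$. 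For the first condition one would look for a prime $\ell$ of good reduction at which $g_\Delta\bmod\ell$ is irreducible; by Chebotarev such $\ell$ exists precisely when $\Gal(g_\Delta)$ contains a $k$-cycle, which is a consequence of the Galois-group conjecture proposed in the final section — so this is where an unconditional argument currently stops. For the second condition it suffices to exhibit one real embedding of $\Q(\beta)$ at which $\beta^2-4<0$, equivalently one root $\alpha$ of $f_\Delta$ on the unit circle with $\alpha\neq\pm1$ (then $\alpha-\alpha^{-1}$ is purely imaginary, hence not in $\Q(\beta)\subset\R$); this, too, is most transparently extracted from the conjectural hyperoctahedral Galois picture.

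The decisive obstacle is therefore Part (III): proving $f_\Delta$ irreducible is, in effect, equivalent to establishing enough of the Galois-group conjecture — transitivity of $\Gal(f_\Delta)$ on the $2k$ roots — for which no unconditional technique is known beyond direct verification for small $\Delta$ or for families with only a few prime factors, such as those singled out in Sections~7--10. The ``no spurious cyclotomic factor'' half of Part (I) is a secondary obstacle: it is completely settled in the divisor-rich examples of Sections~7--10, but a uniform treatment for arbitrary odd $\Delta<0$ would require new input.
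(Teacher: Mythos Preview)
The statement is presented in the paper as a \emph{conjecture}, with no proof; the paper's support consists of the special-case analyses in Sections~7--10, the partial results of Sections~4--6, and the numerical evidence of Section~11. You have correctly recognised this: your proposal is not a proof but a structural analysis with the open steps explicitly named.

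What you do establish is correct and coincides with what the paper proves unconditionally. For $\Delta<0$ odd, every proper divisor $n\mid D$ is squarefree, so Theorem~\ref{prop:negative_big_divisor} gives $r_\Delta(\Phi_n)=1$; Proposition~\ref{prop:mult_of_1} and Proposition~\ref{prop:roots_1} handle $n=1,2$; and $|F_\Delta(\zeta_D)|^2=D$ rules out $\Phi_D$. This is your Part~(II) together with the containment half of Part~(I). The two gaps you flag --- ruling out $\Phi_m$ for $m\nmid D$, and proving $f_\Delta$ irreducible --- are precisely the open pieces. For the first, the paper treats only the prime-conductor case (Section~6) and the families $\pm 3p$ by ad hoc computation; your mod-$2$ reduction is in the same spirit but, as you say, does not close the general case. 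For the second, your reduction to transitivity of $\Gal(f_\Delta)$ is exactly the weak form of the Galois conjecture of Section~11, for which the paper offers only numerical verification. So there is no proof here, but your diagnosis of where the obstacles lie is accurate and matches the paper's own position.
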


\begin{conj} For each $n\geq 1$, let $r_\Delta(n)$ be the multiplicity of $\Phi_n(x)$ in  $F_\Delta(x)/x$, here multiplicity 0 is allowed. One has \[
F_\Delta(x)=\pm x \left( \prod_{n=1}^\infty \Phi_n(x)^{r_\Delta(n)}\right) f(x),
\]
for some monic {\bf irreducible} polynomial $f(x)\in \Z[x]$.
\end{conj}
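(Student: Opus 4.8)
The plan is to isolate the elementary, formal content of the statement from the one genuinely hard assertion it contains. For the formal part: since $F_\Delta(x)/x\in\Z[x]$ has degree $D-2$, only finitely many of the multiplicities $r_\Delta(n)$ are nonzero, so $C_\Delta(x):=\prod_{n\geq 1}\Phi_n(x)^{r_\Delta(n)}$ is a genuine monic element of $\Z[x]$; as the distinct $\Phi_n$ are pairwise coprime, $C_\Delta$ divides $F_\Delta(x)/x$ in $\Q[x]$, hence — being monic — in $\Z[x]$. Writing $F_\Delta(x)=x\,C_\Delta(x)\,q(x)$ with $q\in\Z[x]$, the leading coefficient of $q$ equals that of $F_\Delta$, namely the coefficient of $x^{D-1}$, which is $\chi_\Delta(D-1)=\chi_\Delta(-1)\in\{+1,-1\}$; hence $f(x):=\chi_\Delta(-1)q(x)$ is monic, $F_\Delta(x)=\chi_\Delta(-1)\,x\,C_\Delta(x)\,f(x)$, and $f$ has no cyclotomic factor, for a factor $\Phi_m\mid f$ would give $\Phi_m^{\,r_\Delta(m)+1}\mid F_\Delta(x)/x$, contradicting the maximality of $r_\Delta(m)$. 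All of this is immediate; the entire content of the conjecture is therefore that this cyclotomic-free monic polynomial $f=f_\Delta$ is irreducible over $\Q$ (or is a unit, in the degenerate cases where $F_\Delta(x)/x$ is itself a product of cyclotomic polynomials).

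To attack the irreducibility, I would first reduce the degree using that $f_\Delta$ is reciprocal (Proposition~\ref{prop:reciprocal1} and Lemma~\ref{lem:reciprocal}, by the same computation as in the special cases treated above): writing $f_\Delta(x)=x^{d/2}g_\Delta(x+x^{-1})$ with $d=\deg f_\Delta$ and $g_\Delta\in\Z[x]$ monic of degree $d/2$, one has that $f_\Delta$ is irreducible if and only if $g_\Delta$ is irreducible and, in addition, $f_\Delta$ does not split into two factors of degree $d/2$ interchanged by $x\mapsto x^{-1}$ — the latter being the condition that $\gamma^2-4$ be a nonsquare in the field $\Q[x]/(g_\Delta)$, $\gamma$ the image of $x$, which one expects to settle either by exhibiting a prime modulo which $\gamma^2-4$ is a nonsquare or, when $f_\Delta$ has zeros on the unit circle, by the archimedean observation that the corresponding roots $\gamma\in(-2,2)$ of $g_\Delta$ make $\gamma^2-4$ negative in that real embedding. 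For the irreducibility of $g_\Delta$ itself, the plan is reduction modulo small primes: modulo $2$ one has the completely explicit congruence $F_\Delta(x)\equiv\sum_{1\leq a\leq D-1,\ \gcd(a,D)=1}x^{a}\pmod{2}$, from which, after dividing out the reduction of $C_\Delta$, one reads off the factorization type of $f_\Delta\bmod 2$, whose irreducible factors have explicit degrees (multiplicative orders of $2$ modulo the relevant integers); carrying this out for a second prime $q$ (for instance $q=3$, where $\chi_\Delta\bmod 3$ is again given by an explicit formula), and possibly a few more, one would verify that the resulting collection of factorization types admits no common proper subset-sum — i.e. that no integer $m$ with $0<m<\deg f_\Delta$ can be written, simultaneously for every $q$ in the collection, as a sum of some of the degrees occurring in the factorization of $f_\Delta\bmod q$ — which forces $f_\Delta$ to be irreducible. (A single prime $q$ with $f_\Delta\bmod q$ irreducible — reduction preserves the degree since the leading coefficient is $\pm1$ — would of course finish at once, but there is no reason such a prime must exist.)

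The hard part, and the reason the statement is offered only as a conjecture, is that $f_\Delta$ displays none of the structural features that normally make irreducibility tractable: there is no Eisenstein prime, no distinguished ramified prime yielding a usable Newton polygon, and its discriminant is opaque. Moreover the factorization type of $f_\Delta$ modulo any fixed prime is itself governed by sums of the quadratic character $\chi_\Delta$ over arithmetic progressions in short intervals, whose lengths and signs vary delicately with the conductor $D=|\Delta|$; so obtaining the simultaneous control over two or more independent primes that the subset-sum argument demands, \emph{uniformly} in $D$, seems to require genuinely new input — plausibly analytic, such as lower bounds for the relevant partial character sums and $L$-values, or equidistribution with an explicit error term — rather than the case-by-case verification that our numerical data provide.
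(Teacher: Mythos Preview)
The statement you are addressing is a \emph{conjecture} in the paper, not a theorem; the paper offers no proof, only numerical evidence (the tables in Section~11, verified via the modular criteria of Propositions~\ref{prop:Galois_computation_1}, \ref{prop:Galois_computation}, \ref{prop:galois_computation_4}, \ref{prop:galois_computation_full}) for the specific families $\Delta\in\{4p,-4p,3p,-3p\}$. Your write-up recognizes this explicitly in its final paragraph, so you are not in fact claiming a proof, and there is nothing to compare against on the paper's side.

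That said, your decomposition of the statement is clean and correct: the existence of the factorization $F_\Delta(x)=\chi_\Delta(-1)\,x\,C_\Delta(x)\,f(x)$ with $f$ monic, integral, and cyclotomic-free is indeed elementary, and your argument for it (finiteness of the product, Gauss's lemma for the divisibility in $\Z[x]$, the leading coefficient $\chi_\Delta(-1)$, maximality of $r_\Delta(m)$) is accurate. The paper does not spell out this formal reduction, so your paragraph is a useful complement. Your sketch of a possible attack on irreducibility---reduce to the trace polynomial $g_\Delta$ via reciprocity, then attempt a modular subset-sum obstruction---is reasonable and is in the same spirit as the paper's computational verifications, though the paper works prime-by-prime rather than seeking uniform control. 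Your honest assessment of why this cannot be pushed through in general (no Eisenstein or Newton-polygon structure, the mod-$q$ factorization types being governed by short character sums whose behavior is not uniform in $D$) is a fair summary of the obstruction; the paper does not articulate this diagnosis, so again your discussion adds value even though neither you nor the authors settle the question.
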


\section{Galois theory for $f_{\Delta}(x)$ and $g_{\Delta}(x)$.}

We note that the Galois group of $g_{\Delta}(x)$ acts on the set of roots of $g_{\Delta}(x)$, so it is naturally a subgroup of $S_{h_{\Delta}}$ where $h_{\Delta} = \deg(g_{\Delta}(x))$. Additionally, the Galois group of $f_{\Delta}$ fits into the following exact sequence 

\[ 1 \to \Gal(\Q(f_{\Delta})/\Q (g_{\Delta})) \to \Gal(\Q(f_{\Delta})/\Q) \to  \Gal(\Q(g_{\Delta})/\Q) \to 1.\] 

By definition, $\Gal(\Q(f_{\Delta})/\Q (g_{\Delta}))$ is naturally a subgroup of $(\Z/2)^{h_{\Delta}}.$ We then conclude that the Galois group $\Gal(\Q(f_{\Delta})/\Q (g_{\Delta}))$ is a subgroup of the semi-direct product $(\Z/2\Z)^{h_{\Delta}}\rtimes S_{h_{\Delta}}$. Note that $(\Z/2\Z)^{h_{\Delta}}\rtimes S_{h_{\Delta}}$ is also naturally a subgroup of $S_{2 h_{\Delta}}$, the symmetric group on $2 h_{\Delta}$ letters (se \cite[Section 2]{[DDS]}). We have the following commutative diagram. 

\begin{lem} \label{lem:sgn}
\[\xymatrix@C+1pc{
         (\Z/2\Z)^{h_{\Delta}}\rtimes S_{h_{\Delta}} \ar[rd]^{\Sigma} \ar@{^{(}->}[r] &S_{2 h_{\Delta}}  \ar[d]^{\text{sgn}}  \\
         & \Z/2   \\
 }\]
Here $\text{sgn}$ is the signature map and $\Sigma$ is the following summation map 
\[ \Sigma (a_1, a_2, \ldots, a_{h_{\Delta}}, \sigma)  = \prod_{i=1}^{h_\Delta} a_i .\] 
\end{lem}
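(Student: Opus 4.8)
## Proof proposal for Lemma~\ref{lem:sgn}

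The plan is to verify the commutativity of the diagram by checking it on a generating set of the wreath product $G := (\Z/2\Z)^{h_{\Delta}}\rtimes S_{h_{\Delta}}$, using the explicit embedding $G \hookrightarrow S_{2h_{\Delta}}$ described in \cite[Section 2]{[DDS]}. Recall that in this embedding we think of $S_{2h_{\Delta}}$ as acting on the set $\{1,\dots,h_{\Delta}\}\times\{0,1\}$, a copy of $h_{\Delta}$ ``blocks'' of size $2$: the factor $(\Z/2\Z)^{h_{\Delta}}$ acts by, in the $i$-th block, swapping the two elements $(i,0)\leftrightarrow(i,1)$ precisely when the $i$-th coordinate is the nontrivial element, and the factor $S_{h_{\Delta}}$ permutes the blocks (preserving the internal order $0<1$). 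Since both $\Sigma$ and $\mathrm{sgn}\circ(\text{inclusion})$ are group homomorphisms $G\to\Z/2\Z$, it suffices to check they agree on generators.

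First I would take as generators of $G$ the elements $e_i := (0,\dots,0,1,0,\dots,0;\,\mathrm{id})$ (the nontrivial element in the $i$-th coordinate of the base group, with trivial permutation), for $1\le i\le h_{\Delta}$, together with the transpositions $\tau_j := (\mathbf{0};\,(j\ j{+}1))$ for $1\le j\le h_{\Delta}-1$. On the $\Sigma$ side: $\Sigma(e_i)$ is the nontrivial element of $\Z/2\Z$ (since exactly one coordinate in the product is nontrivial), while $\Sigma(\tau_j)$ is trivial (all base-group coordinates are trivial). On the $\mathrm{sgn}$ side: under the embedding, $e_i$ maps to the single transposition $((i,0)\ (i,1))$, which is odd, so $\mathrm{sgn}(e_i)$ is nontrivial; and $\tau_j$ maps to the product of the two transpositions $((j,0)\ (j{+}1,0))$ and $((j,1)\ (j{+}1,1))$ (it swaps the two blocks $j$ and $j{+}1$ wholesale), which is a product of two transpositions, hence even, so $\mathrm{sgn}(\tau_j)$ is trivial. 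Thus $\Sigma$ and $\mathrm{sgn}\circ(\text{inclusion})$ agree on all generators, and therefore agree as homomorphisms, which is exactly the commutativity of the diagram.

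The only genuine point requiring care — and the step I expect to be the main (minor) obstacle — is pinning down the precise convention for the embedding $G\hookrightarrow S_{2h_{\Delta}}$ so that the computation of the image of each $\tau_j$ as a product of \emph{two} disjoint transpositions (rather than one, or some twisted version) is correct; this is where the parity miracle happens, since a ``block swap'' contributes evenly to the signature while the base-group generators contribute oddly. Once the convention from \cite[Section 2]{[DDS]} is fixed, this is a direct check. One should also note that $\Sigma$ as written, $\Sigma(a_1,\dots,a_{h_{\Delta}},\sigma)=\prod_i a_i$, visibly ignores $\sigma$ and is a homomorphism because the $S_{h_{\Delta}}$-action permutes the $a_i$ and hence fixes their product; I would remark on this briefly to justify that both maps in the diagram are homomorphisms, which is what legitimizes reducing to generators.
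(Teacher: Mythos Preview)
Your proposal is correct and takes essentially the same approach as the paper: both arguments reduce to checking the commutativity on a convenient set of elements and compute the cycle structure of their images in $S_{2h_{\Delta}}$. The paper checks elements of the form $((a_1,\ldots,a_n),(1\ 2))$ for arbitrary $a_i$ (working through the four cases for $(a_1,a_2)$ and the two cases for each $a_k$, $k\geq 3$), whereas you check the cleaner generating set $\{e_i\}\cup\{\tau_j\}$; your version is slightly more streamlined but the substance is the same.
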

\begin{proof}
Let $n=h_\Delta$. We consider $(\Z/2\Z)^{n}\rtimes S_{n}$ as a subgroup of the symmetric group of $\{-n,-(n-1),\ldots,-1,1,\ldots,(n-1),n\}$ as follows: an element $((a_1,\ldots,a_n),\sigma)$ of $(\Z/2\Z)^{n}\rtimes S_{n}$ is considered as a permutation $\tilde{\sigma}$ which is defined by
\[
\begin{aligned}
\tilde{\sigma}(k)&=a_{\sigma(k)}\sigma(k), 1\leq k\leq n,\\
\tilde{\sigma}(-k)&=-a_{\sigma(k)}\sigma(k), 1\leq k\leq n.
\end{aligned}
\]
To prove the commutativity of the diagram, it is enough to consider the case $\sigma=(1\; 2)$. We consider the cyclic decomposition of $\tilde{\sigma}$. One has
\[
\begin{aligned}
1\stackrel{\tilde{\sigma}}{\mapsto} a_22\stackrel{\tilde{\sigma}}{\mapsto} a_1a_21\stackrel{\tilde{\sigma}}{\mapsto} a_12\stackrel{\tilde{\sigma}}{\mapsto} 1\\
 -1\stackrel{\tilde{\sigma}}{\mapsto} -a_22\stackrel{\tilde{\sigma}}{\mapsto} -a_1a_21\stackrel{\tilde{\sigma}}{\mapsto} -a_12\stackrel{\tilde{\sigma}}{\mapsto} -1\\
k\stackrel{\tilde{\sigma}}{\mapsto} a_k k \stackrel{\tilde{\sigma}}{\mapsto} k, \; \text{ for } 3\leq k\leq n\\
-k\stackrel{\tilde{\sigma}}{\mapsto} -a_k k \stackrel{\tilde{\sigma}}{\mapsto} -k, \; \text{ for } 3\leq k\leq n\\
\end{aligned}
\]
Note that for $k\geq 3$, if $a_k=1$ then $(k)$ is a $1$-cycle and its signature is 1 and   if $a_k=-1$ then $(k \; a_kk)$ is a $2$-cycle and its signature is -1. In any case, ${\rm sgn}((k \; a_kk))=a_k$ if $k\geq 3$.

If $a_1=a_2=1$ then one has two 2-cycles $(1\;2 )$ and $(-1\;-2)$ and the product of their signature is $1=a_1a_2$.
If $a_1=a_2=-1$ then one has two 2-cycles $(1\;-2 )$ and $(-1\;2)$ and   the product of their signature is $1=a_1a_2$.
If $a_1=1,a_2=-1$ then one has one 4-cycle $(1\;-2 \;-1\;2)$ and its signature is $-1=a_1a_2$.
If $a_1=-1,a_2=1$ then one has one 4-cycle $(1\;2 \;-1\;2)$ and its signature is $-1=a_1a_2$.

Thus ${\rm sgn}(\tilde{\sigma})=a_1a_2a_3\cdots a_n$, as desired.
\end{proof}
A direct corollary of this lemma is the following. 

\begin{cor} \label{cor:smaller_galois} 
Suppose that the discriminant $\text{disc}(f_{\Delta})$ of $f_{\Delta}$ is a non-zero perfect square. Then the Galois group of $f_{\Delta}$ is contained in the kernel of the map 
\[ \Sigma: (\Z/2\Z)^{h_{\Delta}}\rtimes S_{h_{\Delta}} \to \Z/2 .\] 
Note further that the kernel is isomorphic to to $\ker(\Sigma') \rtimes S_{h_{\Delta}}$, where $\Sigma'$ is the summation map 
\[ \Sigma': (\Z/2)^{h_{\Delta}} \to \Z/2 .\] 

\end{cor}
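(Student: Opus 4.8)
The plan is to deduce Corollary~\ref{cor:smaller_galois} from Lemma~\ref{lem:sgn} together with the standard fact relating the square-discriminant condition to the signature map. First I would recall that for any separable polynomial $f_{\Delta}$ of degree $2h_{\Delta}$, the Galois group $\Gal(f_{\Delta})$, viewed inside $S_{2h_{\Delta}}$ via its action on the $2h_{\Delta}$ roots, lands in the alternating group $A_{2h_{\Delta}}=\ker(\text{sgn})$ if and only if $\operatorname{disc}(f_{\Delta})$ is a square in $\Q$; this is classical (e.g. because $\sqrt{\operatorname{disc}(f_{\Delta})}$ generates the fixed field of $A_{2h_{\Delta}}\cap\Gal(f_{\Delta})$). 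So under the hypothesis, $\Gal(f_{\Delta})\subseteq\ker(\text{sgn})$.

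Next I would invoke the embedding established just before the corollary: $\Gal(f_{\Delta})\subseteq (\Z/2\Z)^{h_{\Delta}}\rtimes S_{h_{\Delta}}\hookrightarrow S_{2h_{\Delta}}$, where the wreath-type subgroup is realized by the explicit permutation action of $((a_1,\dots,a_{h_{\Delta}}),\sigma)$ on $\{\pm1,\dots,\pm h_{\Delta}\}$ described in the proof of Lemma~\ref{lem:sgn}. By that lemma, the diagram commutes, i.e. $\text{sgn}|_{(\Z/2\Z)^{h_{\Delta}}\rtimes S_{h_{\Delta}}}=\Sigma$. Therefore $\ker(\text{sgn})\cap\big((\Z/2\Z)^{h_{\Delta}}\rtimes S_{h_{\Delta}}\big)=\ker(\Sigma)$, and combining with the previous paragraph gives $\Gal(f_{\Delta})\subseteq\ker(\Sigma)$, which is the first assertion.

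For the structural description of $\ker(\Sigma)$, I would argue directly. Write $V=(\Z/2\Z)^{h_{\Delta}}$ with $S_{h_{\Delta}}$ acting by permuting coordinates, so $G:=V\rtimes S_{h_{\Delta}}$. The map $\Sigma\colon G\to\Z/2$, $((a_1,\dots,a_{h_{\Delta}}),\sigma)\mapsto\sum_i a_i$ (writing $\Z/2$ additively), is a homomorphism because the $S_{h_{\Delta}}$-action permutes the $a_i$ and hence preserves their sum, so $\Sigma$ is exactly the composite of the projection $G\to V$ followed by the permutation-invariant linear functional $\Sigma'\colon V\to\Z/2$. Its restriction $\Sigma'$ to $V$ is surjective, so $\Sigma$ is surjective and $\ker(\Sigma)$ has index $2$ in $G$; moreover $\ker(\Sigma)$ contains all of $S_{h_{\Delta}}$ (elements $((0,\dots,0),\sigma)$) and contains $\ker(\Sigma')\subseteq V$, while $\ker(\Sigma')$ is $S_{h_{\Delta}}$-stable since $\Sigma'$ is permutation-invariant. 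Hence $\ker(\Sigma')\rtimes S_{h_{\Delta}}$ is a subgroup of $\ker(\Sigma)$, and a cardinality count ($|\ker(\Sigma')|=2^{h_{\Delta}-1}$, $|\ker(\Sigma)|=2^{h_{\Delta}-1}\,h_{\Delta}!$) shows the inclusion is an equality, giving the claimed isomorphism $\ker(\Sigma)\cong\ker(\Sigma')\rtimes S_{h_{\Delta}}$.

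I do not expect any genuine obstacle here: the corollary is essentially a repackaging of Lemma~\ref{lem:sgn} plus the textbook discriminant criterion, and the only mild care needed is to state the discriminant criterion for the degree-$2h_{\Delta}$ polynomial $f_{\Delta}$ (not for $g_{\Delta}$) and to be explicit that $\Sigma$ factors through the projection to $V$ so that $\ker(\Sigma)$ really is the internal semidirect product $\ker(\Sigma')\rtimes S_{h_{\Delta}}$. If anything, the subtlest point is making sure the reader sees that $\Gal(f_{\Delta})$ is being considered inside $S_{2h_{\Delta}}$ via the same embedding used in Lemma~\ref{lem:sgn}, so that the two uses of ``$\text{sgn}$'' coincide; this is immediate from the construction of the embedding but worth one sentence.
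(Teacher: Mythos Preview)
Your proposal is correct and follows essentially the same approach as the paper: use the classical fact that a square discriminant forces the Galois group into $A_{2h_{\Delta}}=\ker(\mathrm{sgn})$, then apply Lemma~\ref{lem:sgn} to identify $\ker(\mathrm{sgn})\cap\big((\Z/2\Z)^{h_{\Delta}}\rtimes S_{h_{\Delta}}\big)$ with $\ker(\Sigma)$. Your write-up is in fact more complete than the paper's, since you also supply the short argument for the structural identification $\ker(\Sigma)\cong\ker(\Sigma')\rtimes S_{h_{\Delta}}$, which the paper merely asserts.
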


\begin{proof}
Let $u_1, u_2, \ldots, u_{2 h_{\Delta}}$ be all the roots of $f_{\Delta}.$ Let 
$ A = \sqrt{\text{disc}(f)} = \prod_{i<j} (u_j-u_i).$
For $\sigma \in \Gal(\Q(f_{\Delta})/\Q)$, we have $\sigma(A) = \text{sgn}(\sigma) A .$ By our assumption, $ A \in \Z$, so $\sigma$ must be an even permutation. By Lemma\ref{lem:sgn}, we conclude that $\sigma \in \ker(\Sigma)$. 
\end{proof}

\begin{rmk}
A quite interesting consequence of this corollary is that when $\text{dics}(f)$ is a perfect square, even though $f_{\Delta}$ is expected to be irreducible over $\Z$, it is reducible over $\F_q$ for all prime $q.$ In fact, if $f_{\Delta}$ is irreducible modulo $q$ then the Galois group of $f_{\Delta}$ must contain a $2 h_{\Delta}$-cycle. Since an $2h_{\Delta}$-cycle is an odd permutation, this contradicts the fact that all elements of the Galois group of $f_{\Delta}$ are even.  

\end{rmk}

Regarding the discriminant of $f_{\Delta}(x)$, we recall the following proposition. 
\begin{prop} (\cite[Proposition 4.4]{[MTT3]}) \label{prop:discriminant}
Let $f$ be a reciprocal polynomial of even degree $2n$ over a field of characteristics different from $2$. Let $g$ be the trace polynomial associated with $f$.  Let $s=(-1)^n f(1) f(-1)$. Then 
\[ {\rm disc}(f)= s \times {\rm disc}(g)^2. \]
Consequently, if $s$ is a perfect square then so is ${\rm disc}(f).$

\end{prop}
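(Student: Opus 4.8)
The plan is to prove the identity by a direct computation with the roots, after first reducing to the monic case. If $f$ is rescaled by a scalar $\lambda$, then $g$ is rescaled by $\lambda$, the quantity $s$ by $\lambda^2$, and the discriminant of a degree-$N$ polynomial by $\lambda^{2N-2}$; since the left side then scales by $\lambda^{4n-2}$ and the right side by $\lambda^{2}\cdot\lambda^{4n-4}=\lambda^{4n-2}$ as well, I may assume $f$ (hence $g$) is monic of degree $2n$ (resp.\ $n$). Passing to an algebraic closure, I would write the $2n$ roots of $f$, with multiplicity, as $\alpha_1,\alpha_1^{-1},\dots,\alpha_n,\alpha_n^{-1}$: this is legitimate because a reciprocal polynomial has nonzero constant term and its multiset of roots is stable under $x\mapsto x^{-1}$. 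From $f(x)=x^{n}g(x+x^{-1})$ one gets $g(\beta_i)=\alpha_i^{-n}f(\alpha_i)=0$ where $\beta_i:=\alpha_i+\alpha_i^{-1}$, so $\beta_1,\dots,\beta_n$ are precisely the roots of $g$.

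Next I would split $\text{disc}(f)=\prod_{k<l}(r_k-r_l)^2$, the product over the listed roots $r_1,\dots,r_{2n}=\alpha_1,\alpha_1^{-1},\dots,\alpha_n,\alpha_n^{-1}$, according to whether the two positions lie in the same block $\{\alpha_i,\alpha_i^{-1}\}$ or in two different blocks; these $n$ ``diagonal'' pairs and $4\binom n2$ ``mixed'' pairs exhaust all $\binom{2n}{2}$ pairs disjointly. The diagonal part contributes $\prod_i(\alpha_i-\alpha_i^{-1})^2=\prod_i(\beta_i^2-4)$. For a fixed index pair $i<j$ the key identity is
\[
[(\alpha_i-\alpha_j)(\alpha_i^{-1}-\alpha_j^{-1})]\,[(\alpha_i-\alpha_j^{-1})(\alpha_i^{-1}-\alpha_j)]=(\beta_i-\beta_j)^2,
\]
which one checks by rewriting the two bracketed factors as $-(\alpha_i-\alpha_j)^2/(\alpha_i\alpha_j)$ and $-(\alpha_i\alpha_j-1)^2/(\alpha_i\alpha_j)$ and comparing with $\beta_i-\beta_j=(\alpha_i-\alpha_j)(\alpha_i\alpha_j-1)/(\alpha_i\alpha_j)$. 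Squaring, the four mixed differences attached to $\{i,j\}$ contribute $(\beta_i-\beta_j)^4$ to $\text{disc}(f)$, so
\[
\text{disc}(f)=\Big(\prod_{i=1}^{n}(\beta_i^2-4)\Big)\prod_{i<j}(\beta_i-\beta_j)^4=\Big(\prod_{i=1}^{n}(\beta_i^2-4)\Big)\,\text{disc}(g)^2 .
\]

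It remains to recognize $\prod_i(\beta_i^2-4)$ as $s$. Evaluating $f(x)=x^{n}g(x+x^{-1})$ at $x=1$ and $x=-1$ gives $f(1)=g(2)$ and $f(-1)=(-1)^{n}g(-2)$, whence, using $g(y)=\prod_i(y-\beta_i)$,
\[
s=(-1)^{n}f(1)f(-1)=g(2)\,g(-2)=\prod_{i=1}^{n}(2-\beta_i)(-2-\beta_i)=\prod_{i=1}^{n}(\beta_i^2-4),
\]
which yields $\text{disc}(f)=s\,\text{disc}(g)^2$; the ``consequently'' clause is then immediate since $\text{disc}(g)^2$ is always a perfect square. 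I do not expect a deep obstacle: the only points requiring care are tracking the signs in the four-fold identity, confirming that the decomposition of pairs is exhaustive and disjoint, and handling the degenerate cases where some $\alpha_i=\pm1$ or the $\alpha_i$ coincide up to inversion — in each such case $f$ has a repeated root so $\text{disc}(f)=0$, and correspondingly either some $\beta_i^2-4=0$ (so $s=0$) or some $\beta_i=\beta_j$ (so $\text{disc}(g)=0$), so the identity still holds. The hypothesis $\Char\neq 2$ is used only to keep $1\neq -1$ and the trace substitution well behaved.
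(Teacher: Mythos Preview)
Your argument is correct and is essentially the standard proof of this identity. The paper itself does not prove the proposition; it is quoted from \cite[Proposition 4.4]{[MTT3]}, so there is no in-paper proof to compare against. One small point you could make more explicit: when you assert that the roots of a monic reciprocal polynomial of even degree $2n$ can be listed as $\alpha_1,\alpha_1^{-1},\dots,\alpha_n,\alpha_n^{-1}$, this requires knowing that $1$ and $-1$ each occur with even multiplicity. This is true (and uses $\Char\neq 2$): writing $f(x)=(x-1)^m h(x)$ with $h(1)\neq 0$ and applying $x^{2n}f(1/x)=f(x)$ forces $(-1)^m h(1)=h(1)$, hence $m$ is even, and similarly for $-1$. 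With that in hand your pairing is always available, and your treatment of the degenerate cases (where both sides vanish) is then fully justified. The scaling reduction, the diagonal/mixed splitting of $\text{disc}(f)$, the four-factor identity for each pair $i<j$, and the identification $s=g(2)g(-2)=\prod_i(\beta_i^2-4)$ are all correct as written.
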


In \cite{[MTT3]}, we conjectured that the Galois group of $f_{\Delta}(x)$ and $g_{\Delta}(x)$ are as large as possible when $D= |\Delta|$ is a prime number (see \cite[Conjecture 4.9, Conjecture 4.13]{[MTT3]}); namely the Galois group of $g_{\Delta}$ is $S_{h_{\Delta}}$ and the Galois group of $f_{\Delta}$ is $(\Z/2\Z)^{h_{\Delta}}\rtimes S_{h_{\Delta}}$.  It is tempting to make a similar conjecture for all discriminant $\Delta$. Unfortunately, in our investigation,  we found that it is not always the case.  After some further investigation, we realize that this issue arises from the fact that, occasionally, there is an extra symmetry that puts some constraints on the Galois group of $f_{\Delta}$. More specifically, we discover some cases where the discriminant of $f_{\Delta}$ is a perfect square. By Corollary \ref{cor:smaller_galois}, the Galois group of $f_{\Delta}$ is contained in the kernel of the map $\Sigma.$ On the other hand, we also find that within the range of our data, it is always the case that the Galois group of $g_{\Delta}(x)$ is $S_{h_{\Delta}}.$ In this section, we provide some extensive data about the Galois groups of $f_{\Delta}(x)$ and $g_{\Delta}(x).$ As in \cite{[MTT3]}, when the Galois group of $f_{\Delta} =(\Z/2\Z)^{h_{\Delta}}\rtimes S_{h_{\Delta}}$, we can detect it by using \cite[Proposition 4.14]{[MTT3]}. Similarly, when the Galois group of $g_{\Delta}$ is $S_{h_{\Delta}}$, we can detect it by using \cite[Proposition 4.10]{[MTT3]}.

To deal with the case where the discriminant of $f$ is a square as discussed in Corollary \ref{cor:smaller_galois}, we need the following proposition which is a variant of \cite[Proposition 4.14]{[MTT3]}. We first introduce a lemma.

\begin{lem} \label{lem:Galois_computation_3}
Let $H$ be a subgroup of $(\Z/2)^n \rtimes S_n \subset S_{2n}$ such that 

\begin{enumerate}
    \item The natural projection map $H \to S_n$ is surjective.
    \item $H$ contains a product of a $2$-cycle and another $4$-cycle (these two cycles are disjoint). 
\end{enumerate}
Then $H$ contains $\ker(\Sigma') \rtimes S_n$ where $\Sigma'$ is the summation map 
\[ \Sigma': (\Z/2)^n \to \Z/2 .\] 
\end{lem}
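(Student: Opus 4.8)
The plan is to identify a suitable normal subgroup of $(\Z/2)^n\rtimes S_n$ coming from the kernel of the projection to $S_n$, namely $K:=(\Z/2)^n\rtimes\{1\}\cong(\Z/2)^n$, and to show that $H\cap K$ is large enough to contain $\ker(\Sigma')$. Since $H$ surjects onto $S_n$ by hypothesis (1), $H$ acts on $H\cap K$ by conjugation through the full symmetric group $S_n$, which permutes the $n$ coordinates of $(\Z/2)^n$. The only $S_n$-stable subgroups of $(\Z/2)^n$ are $0$, the diagonal $\langle(1,1,\ldots,1)\rangle$, $\ker(\Sigma')$, and all of $(\Z/2)^n$. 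So it suffices to produce a single element of $H\cap K$ that lies neither in the diagonal nor in $0$; then the $S_n$-submodule it generates is either $\ker(\Sigma')$ or all of $(\Z/2)^n$, and in either case $H\supseteq\ker(\Sigma')$, hence $H\supseteq\ker(\Sigma')\rtimes S_n$ once we combine with surjectivity onto $S_n$.

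\textbf{Key steps.} First I would set up the coordinates: write elements of $(\Z/2)^n\rtimes S_n$ as $((a_1,\ldots,a_n),\sigma)$ acting on $\{\pm1,\ldots,\pm n\}$ as in the proof of Lemma~\ref{lem:sgn}. Second, using hypothesis (2), take $h\in H$ of the form ``a $2$-cycle times a disjoint $4$-cycle'' inside $S_{2n}$; translating through the embedding, its image $\sigma$ in $S_n$ is either a product of two disjoint transpositions, or a single transposition, or a single $3$-cycle (depending on how the $\pm$ pairs are interchanged) --- I would enumerate which underlying $\sigma\in S_n$ can produce a cycle type $2+4$ on $2n$ points. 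Third, compute a suitable power of $h$: raising $h$ to the order of $\sigma$ kills the $S_n$-part and lands in $K$; the resulting vector $v\in(\Z/2)^n$ must be nonzero because the $2$-cycle and $4$-cycle analysis (exactly as in Lemma~\ref{lem:sgn}) shows that a $4$-cycle on $\{\pm i,\pm j\}$ forces $a_ia_j=-1$, so some coordinate is nontrivial; and $v$ is not the all-ones vector because the cycle has finite, small support --- the coordinates outside the support of $\sigma$ that produced $1$-cycles contribute trivially, while a $2+4$ type on $2n$ letters touches at most $3$ of the $n$ indices, so for $n\geq 4$ the vector $v$ has both a zero and a nonzero coordinate. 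Fourth, invoke the classification of $S_n$-submodules of $(\Z/2)^n$ to conclude $\langle S_n\cdot v\rangle\supseteq\ker(\Sigma')$, and finish by noting $H$ contains both $\ker(\Sigma')\subseteq K$ and (lifts of) all of $S_n$, hence contains $\ker(\Sigma')\rtimes S_n$ (one checks this generated subgroup is exactly the semidirect product by counting or by normality of $\ker(\Sigma')$).

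\textbf{Main obstacle.} The delicate point is the case analysis in Step~3: I must be careful about exactly which element $\sigma\in S_n$ underlies a cycle of type $2+4$ on $\{\pm1,\ldots,\pm n\}$, and then verify in each case that the power of $h$ landing in $K$ is genuinely a nonzero, non-diagonal vector. The computation in the proof of Lemma~\ref{lem:sgn} already records that on a $2$-cycle $(i\;j)$ of $\sigma$ one gets either two $2$-cycles (when $a_ia_j=1$) or one $4$-cycle (when $a_ia_j=-1$), and on a fixed point $k$ one gets a $1$-cycle or a $2$-cycle $(k\;-k)$; so a cycle structure $2+4$ on $2n$ points must arise from $\sigma$ having a single transposition with $a_ia_j=-1$ (giving the $4$-cycle) together with exactly one fixed point $k$ of $\sigma$ with $a_k=-1$ (giving the $2$-cycle) --- the remaining fixed points have $a_\ell=1$. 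Then $h^2$ has $S_n$-part trivial and $(\Z/2)^n$-part equal to the vector with support contained in $\{i,j\}$ --- in particular a nonzero vector (from the transposition part) that is certainly not all-ones once $n\geq 3$. I would spell out this single clean case, remark that $n=h_\Delta$ is large in all cases of interest, and then the submodule classification does the rest; the edge cases $n\leq 2$ can be dismissed as vacuous or handled directly.
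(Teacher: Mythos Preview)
Your approach is essentially the paper's: both square the element $h$ from hypothesis~(2) to obtain the weight-two vector $(i,-i)(j,-j)$ in $K=(\Z/2)^n$, then use the transitive $S_n$-action to generate all of $\ker(\Sigma')$. You package the second step via the classification of $S_n$-submodules of $(\Z/2)^n$, whereas the paper conjugates explicitly by lifts of transpositions; your cycle-type analysis in Step~3 is also spelled out in more detail than the paper's direct observation that the $4$-cycle must have the shape $(k_1,k_2,-k_1,-k_2)$, but the content is identical.

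One point worth tightening (in both your write-up and the paper's): the inference ``$\ker(\Sigma')\subseteq H$ and $H\twoheadrightarrow S_n$, hence $\ker(\Sigma')\rtimes S_n\subseteq H$'' is not automatic --- the twisted index-$2$ subgroup $\{(v,\sigma):\Sigma'(v)=\mathrm{sgn}(\sigma)\}$ satisfies both premises but not the conclusion. It goes through here because $h$ itself already lies in $\ker(\Sigma')\rtimes S_n$ (its $(\Z/2)^n$-part has exactly two $-1$'s while its $S_n$-part is a single transposition), which rules out that twisted possibility in the quotient $\Z/2\times S_n$.
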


\begin{proof}
We we recall that we consider $(\Z/2\Z)^{n}\rtimes S_{n}$ as a subgroup of the symmetric group of $\{-n,-(n-1),\ldots,-1,1,\ldots,(n-1),n\}$ by the following rules: an element $((a_1,\ldots,a_n),\sigma)$ of $(\Z/2\Z)^{n}\rtimes S_{n}$ is considered as a permutation $\tilde{\sigma}$ which is defined by
\begin{align*}
\tilde{\sigma}(k)=a_{\sigma(k)}\sigma(k), 1\leq k\leq n, \text{ and }
\tilde{\sigma}(-k)=-a_{\sigma(k)}\sigma(k), 1\leq k\leq n.
\end{align*} 
We can see that for all  $k$,
\begin{equation} \label{eq:sigma}
    \tilde{\sigma}(-k)= - \tilde{\sigma}(k).
\end{equation}
By the second assumption, we can assume that, up to an ordering, $H$ contains an element $\tilde{\sigma}=(a_1, a_2, \ldots, a_n, \sigma)$ of the form 
$ \tilde{\sigma} =(n, -n) (k_1, k_2, k_3, k_4),$
where $k_i$ are distinct and $|k_i| \in \{1, \ldots, n-1 \}$ for $1 \leq i \leq 4.$  By Equation \ref{eq:sigma} the 4-cycle $(-k_1, -k_2, -k_3, -k_4)$ is factor of $\tilde{\sigma}$ as well. Therefore, we must have 
\[ (k_1, k_2, k_3, k_4) = (-k_1, -k_2, -k_3, -k_4) .\]
We remark that $|k_1| \neq |k_2|$; otherwise $k_1 \to k_2 \to k_1$. Therefore, either $k_3=-k_1$ or $k_4=-k_1.$ The second case cannot happen because it will force $k_2= -k_2.$ So, $k_3=-k_1$ and consequently $k_4=-k_2.$ In other words, $\tilde{\sigma}$ is of the form 
\[ \tilde{\sigma} = (n, -n) (k_1, k_2, -k_1, -k_2) .\]
We then have 
$ \tilde{\sigma}^2 = (k_1, -k_1) (k_2, -k_2) \in H.$
Let $1 \leq i<j \leq n$. By the first assumption, we can find an element $\tilde{\tau}=(b_1, b_2, \ldots, b_n, \tau) \in H$ such that 
$ \tau = (k_1, i) (k_2 ,j).$ 
We can see that 
\[ \tilde{\tau} \tilde{\sigma}^2 \tilde{\tau}^{-1} = (\tilde{\tau}(k_1), \tilde{\tau}(-k_1))(\tilde{\tau}(k_2), \tilde{\tau}(-k_2) = (i, -i)(j,-j).\] 

This shows that the element $(c_1, c_2, \ldots, c_n, 1) \in H$ where $c_i=c_j=-1$ and $c_k=0$ if $k \not \in \{i,j \}.$ Consequently, $H$ contains $\ker(\Sigma') \times {1}.$ By our assumption, the natural map $H \to S_n$ is surjective, we can conclude that $H$ contains $\ker(\Sigma') \rtimes S_n.$
\end{proof}

We then have the following proposition. 
\begin{prop} \label{prop:galois_computation_4}
Let $f(x)$ be a monic reciprocal polynomial with integer coefficients of even degree $2n$. Let $g$ be the trace polynomial of $f$. Assume that 
\begin{enumerate}
    \item The Galois group of $g$ is $S_n.$
    \item There exists a prime number $q$ such that $f(x)$ has the following factorization in $\F_q(x)$
    \[ f(x)=p_2(x) p_4(x) h(x) ,\]
    where $p_2(x)$ is an irreducible polynomial of degree $2$, $p_4(x)$ is an irreducible polynomial of degree $4$, and $h(x)$ is a product of distinct irreducible polynomials of odd degrees.
\end{enumerate}
Then the Galois group of $f$ contains $\ker(\Sigma') \rtimes S_n.$ Furthermore 
\begin{enumerate}
    \item If the discriminant of $f$ is a perfect square then the Galois group of $f$ is $\ker(\Sigma') \rtimes S_n.$ 
    \item If the discriminant of $f$ is not a perfect square then the Galois group of $f$ is $(\Z/2)^n \rtimes S_n$.
\end{enumerate}
\end{prop}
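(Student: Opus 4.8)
The plan is to combine Dedekind's reduction theorem with Lemma~\ref{lem:Galois_computation_3} and the sign constraint of Lemma~\ref{lem:sgn}. Write $G=\Gal(\Q(f)/\Q)$, acting on the $2n$ roots of $f$. First I would observe that hypothesis $(2)$ forces $f$ to be separable: the factors $p_2,p_4$, together with the pairwise distinct irreducible factors of $h$, are all distinct from one another (those of $h$ have odd degree while $p_2,p_4$ have degrees $2$ and $4$), so $f\bmod q$ is squarefree and hence $\operatorname{disc}(f)\not\equiv 0\pmod q$; in particular $q\nmid\operatorname{disc}(f)$ and $\operatorname{disc}(f)\neq 0$. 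By Proposition~\ref{prop:discriminant}, $\operatorname{disc}(f)=(-1)^n f(1)f(-1)\cdot\operatorname{disc}(g)^2$, so $f(1)f(-1)\neq 0$; thus the $2n$ roots of $f$ are distinct and none equals $\pm1$, they fall into $n$ pairs $\{u_i,u_i^{-1}\}$, the numbers $u_i+u_i^{-1}$ are precisely the roots of $g$, and $G$ embeds in $(\Z/2\Z)^n\rtimes S_n\subseteq S_{2n}$ as in the discussion preceding Lemma~\ref{lem:sgn}. Under this embedding the projection $(\Z/2\Z)^n\rtimes S_n\to S_n$ restricts on $G$ to the restriction map $\Gal(\Q(f)/\Q)\to\Gal(\Q(g)/\Q)$, which is onto with image $\Gal(\Q(g)/\Q)=S_n$ by hypothesis $(1)$; this is exactly condition $(1)$ of Lemma~\ref{lem:Galois_computation_3}.

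Next I would manufacture the element required in condition $(2)$ of Lemma~\ref{lem:Galois_computation_3}. Since $q\nmid\operatorname{disc}(f)$, Dedekind's theorem produces a Frobenius element $\tau\in G$ whose cycle type on the roots of $f$ equals the list of degrees of the irreducible factors of $f\bmod q$: one $2$-cycle, one $4$-cycle, and cycles of odd lengths $d_1,\dots,d_r$. Set $M=\operatorname{lcm}(d_1,\dots,d_r)$; this is odd, so $\gcd(M,2)=\gcd(M,4)=1$. Raising to the $M$-th power kills every odd cycle (each $d_i\mid M$) while leaving the $2$-cycle a $2$-cycle and the $4$-cycle a $4$-cycle; hence $\tau^M\in G$ is a disjoint product of a $2$-cycle and a $4$-cycle, which is condition $(2)$. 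Lemma~\ref{lem:Galois_computation_3} then yields $G\supseteq\ker(\Sigma')\rtimes S_n$.

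Finally I would identify $G$ among the two possibilities. Since $\ker(\Sigma')\rtimes S_n=\ker(\Sigma)$ (Lemma~\ref{lem:sgn}) has index $2$ in $(\Z/2\Z)^n\rtimes S_n$, either $G=\ker(\Sigma')\rtimes S_n$ or $G=(\Z/2\Z)^n\rtimes S_n$. If $\operatorname{disc}(f)$ is a perfect square, then $A=\prod_{i<j}(u_j-u_i)$, a square root of $\operatorname{disc}(f)$, lies in $\Q$ and so is $G$-fixed; hence every $\sigma\in G$ is an even permutation of the roots, and by Lemma~\ref{lem:sgn} each $\sigma$ lies in $\ker(\Sigma)$, forcing $G=\ker(\Sigma')\rtimes S_n$. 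If $\operatorname{disc}(f)$ is not a perfect square, then $A\notin\Q$, so some $\sigma\in G$ sends $A$ to $-A$, i.e.\ $\operatorname{sgn}(\sigma)=-1$; by Lemma~\ref{lem:sgn} then $\Sigma(\sigma)=-1$, so $\sigma\notin\ker(\Sigma')\rtimes S_n$, and together with $G\supseteq\ker(\Sigma')\rtimes S_n$ this gives $G=(\Z/2\Z)^n\rtimes S_n$. I expect the only genuinely delicate point to be the passage from the Frobenius element (which still carries the odd cycles contributed by $h$) to the clean $2$-cycle--$4$-cycle element $\tau^M$, together with the verification that the projection $G\to S_n$ is surjective; the remainder is bookkeeping with the lemmas already available.
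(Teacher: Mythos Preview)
Your proof is correct and follows essentially the same approach as the paper's. The paper's own proof is only a few lines and simply states that hypotheses (1) and (2) imply the hypotheses of Lemma~\ref{lem:Galois_computation_3}, then invokes Corollary~\ref{cor:smaller_galois} for the square-discriminant case and the index-$2$ observation for the non-square case; you have supplied exactly the details the paper leaves implicit (Dedekind's theorem to obtain the Frobenius cycle type, raising to the odd $M$ to strip off the odd cycles, and the sign/discriminant argument behind Corollary~\ref{cor:smaller_galois}).
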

\begin{proof}
Let $H$ be the Galois group of $f$.  The first condition and the second condition imply that $H$ satisfies the hypothesis described in Lemma \ref{lem:Galois_computation_3}. Therefore, $H$ contains $\ker(\Sigma') \rtimes S_n.$ If the discriminant of $f$ is a perfect square then Corollary \ref{cor:smaller_galois} implies the Galois group of $f$ is contained in $\ker(\Sigma') \rtimes S_n.$ Since we have inclusion on both ways, we conclude that $H=\ker(\Sigma') \rtimes S_n.$ If the discrimant of $f$ is not a prefect square then $H$ is strictly larger than $\ker(\Sigma') \rtimes S_n$. Therefore, it must be  $(\Z/2)^n \rtimes S_n$.
\end{proof}

\begin{cor}

Assume that $f$ satisfies the hypotheses described in Proposition \ref{prop:galois_computation_4}. Then $f$ is irreducible. 
\end{cor}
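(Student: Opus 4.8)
The plan is to derive irreducibility of $f$ directly from the fact, established in Proposition~\ref{prop:galois_computation_4}, that the Galois group $H$ of $f$ contains $\ker(\Sigma')\rtimes S_n$ and hence certainly contains $\{1\}\rtimes S_n$, i.e.\ the full symmetric group $S_n$ acting by permuting the $n$ blocks $\{k,-k\}$, $1\le k\le n$. The key observation is that irreducibility of $f$ over $\Q$ is equivalent to transitivity of $H$ on the $2n$ roots of $f$ (here we use that $f$, being the Fekete-type polynomial in question, has no repeated roots — it is separable since $\Q$ has characteristic $0$; alternatively one notes any monic polynomial's irreducibility over $\Q$ is equivalent to its Galois group acting transitively on its roots together with separability, which is automatic here). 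So the whole argument reduces to checking that the subgroup $\ker(\Sigma')\rtimes S_n$ of $(\Z/2)^n\rtimes S_n\subset S_{2n}$ acts transitively on $\{-n,\dots,-1,1,\dots,n\}$.

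First I would record the transitivity of $\{1\}\rtimes S_n\le H$ on the set of $n$ unordered pairs $\{k,-k\}$: given $i,j$ with $1\le i,j\le n$, the transposition $(i\ j)\in S_n$ corresponds (via the embedding recalled in the proof of Lemma~\ref{lem:sgn}) to the permutation carrying $i\mapsto j$, $j\mapsto i$, fixing all other coordinates, so in particular it carries $i$ to $j$. Hence $H$ already acts transitively on $\{1,\dots,n\}$. Second, I would exhibit an element of $H$ swapping $k$ and $-k$ for some (hence, by the previous step and conjugation, every) $k$: the element $((-1,0,\dots,0),\,1)$ lies in $(\Z/2)^n$ but \emph{not} in $\ker(\Sigma')$, so this particular generator is unavailable; instead use $((-1,-1,0,\dots,0),\,1)$, which \emph{does} lie in $\ker(\Sigma')$ and sends $1\mapsto -1$ (and $2\mapsto -2$). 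Combining: starting from any root, say $1$, we reach $-1$ by this element, and we reach $\pm k$ for every $k$ by first applying a permutation in $\{1\}\rtimes S_n$ sending $1$ to $k$ (possibly after the sign flip). Concretely, given a target $\varepsilon k$ with $\varepsilon\in\{\pm1\}$: if $\varepsilon=1$ apply $(1\ k)\in S_n$ to $1$; if $\varepsilon=-1$, first apply $((-1,-1,0,\dots),1)$ to get $-1$, then apply the image of the permutation fixing $-1\mapsto -k$, which is the element of $\{1\}\rtimes S_n$ attached to $(1\ k)$ (it sends $-1\mapsto -k$ by the defining rule $\tilde\sigma(-m)=-\tilde\sigma(m)$). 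Thus $H$ is transitive on all $2n$ roots.

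Therefore $f$ is irreducible over $\Q$. Since $f\in\Z[x]$ is monic, irreducibility over $\Q$ is equivalent to irreducibility over $\Z$ by Gauss's lemma. I do not expect any genuine obstacle here: the content is purely the elementary group-theoretic fact that $\ker(\Sigma')\rtimes S_n$ is transitive on $2n$ points, and the mild point to be careful about is precisely that the "obvious" sign-flip generator $((-1,0,\dots,0),1)$ is outside $\ker(\Sigma')$, so one must use a \emph{double} sign flip $((-1,-1,0,\dots),1)$ together with the transitivity of $S_n$ on the blocks to move the second flipped coordinate out of the way — this is the only place where one uses that $n\ge 2$, which holds in all the cases at hand since $h_\Delta=\deg g_\Delta\ge 2$.
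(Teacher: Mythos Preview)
Your argument is correct in substance and is genuinely different from (and cleaner than) the paper's proof. The paper argues by contradiction via a structural dichotomy for factorizations of reciprocal polynomials (citing \cite{[CC]}): either $f$ is a product of two reciprocal polynomials of even degree, forcing $g$ to factor and contradicting $\Gal(g)=S_n$; or $f=ax^n h(x)h(1/x)$, forcing the splitting field of $f$ to coincide with that of $h$ and hence $|\Gal(f)|\le n!$, contradicting $|\Gal(f)|\ge |\ker(\Sigma')\rtimes S_n|=2^{n-1}n!$. Your route bypasses this dichotomy entirely: you simply observe that $\ker(\Sigma')\rtimes S_n$ already acts transitively on the $2n$ symbols $\{\pm 1,\dots,\pm n\}$ (using a double sign-flip to cross between blocks, exactly as you do), so any group containing it acts transitively on the roots of $f$, and transitivity plus separability gives irreducibility. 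This is shorter and avoids the external reference.

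One point to tighten: your justification of separability is not right as written. Being over a field of characteristic $0$ does not by itself make a \emph{polynomial} separable (only irreducible polynomials are automatically separable there), and the corollary is stated for general $f$ satisfying the hypotheses of Proposition~\ref{prop:galois_computation_4}, not specifically for Fekete polynomials. The correct argument is already built into hypothesis~(2): the factorization $f\equiv p_2\,p_4\,h \pmod q$ into pairwise distinct irreducibles shows $f\bmod q$ is squarefree, hence $\gcd(f,f')=1$ in $\F_q[x]$, hence in $\Z[x]$, hence $f$ is separable over $\Q$. With that fix your proof is complete; note also that hypothesis~(2) forces $\deg f\ge 6$, so $n\ge 3$ and your use of $n\ge 2$ is amply justified.
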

\begin{proof}
First, suppose that $f$ is a product of two reciprocal polynomials, say $f_1f_2$ of degree $m_1, m_2$ respectively. We can assume that $m_1, m_2$ are both even. In fact, we have 
$(-1)^{m_1} f_1(-1) = f_1(-1).$  If $m_1$ is odd, then $f(-1)=0$. Consequently, we can rewrite $f(x) = h_1(x) h_2(x)$, 
where $h_1(x)=\frac{h_1(x)}{x+1}$ and $h_2(x)= (x+1)f_2(x)$. We can see that both $h_1(x)$ and $h_2(x)$ are reciprocal polynomials of even degrees.

Now assume that both $m_1, m_2$ are even. Let $g(x), g_1(x), g_2(x)$ be the reciprocal polynomials of $f, f_1, f_2$ respectively. Then we have $g(x) =g_1(x)g_2(x).$
This is a contradiction because as the Galois group of $g$ is $S_n$, $g$ must be irreducible. Now, suppose that $f$ is not a product of two reciprocal polynomials. Then by \cite[Lemma 6]{[CC]}, either $f$ is irreducible over $\Z[x]$ or $f$ is of the form $f = a x^n h(x) h(\frac{1}{x})$, where $h \in \Q[x]$ and $a \in \Q^{\times}.$ However, this implies that the splitting field of $f$ is the same as the splitting field of $h$. Consequently, the Galois group of $f$ is at most $S_n.$ This contradicts the fact that the Galois group of $f$ is $\ker(\Sigma') \rtimes S_n$ as shown in Proposition \ref{prop:galois_computation_4}.
\end{proof}

We remark that by a similar argument, we have the following lemma. 

\begin{lem} \label{lem:Galois_computation_full}
Let $H$ be a subgroup of $(\Z/2)^n \rtimes S_n \subset S_{2n}$ such that 

\begin{enumerate}
    \item The natural projection map $H \to S_n$ is surjective,
    \item $H$ contains  a $2$-cycle. 
\end{enumerate}
Then $H=(\Z/2)^n \rtimes S_n$. 
\end{lem}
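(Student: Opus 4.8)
The plan is to mimic the structure of the proof of Lemma~\ref{lem:Galois_computation_3}, but the argument becomes simpler here because we are handed a bare $2$-cycle rather than a disjoint product of a $2$-cycle and a $4$-cycle. Recall that we view $(\Z/2)^n\rtimes S_n$ inside $S_{2n}$ via the embedding used before: an element $((a_1,\dots,a_n),\sigma)$ acts as the permutation $\tilde\sigma$ with $\tilde\sigma(k)=a_{\sigma(k)}\sigma(k)$ and $\tilde\sigma(-k)=-a_{\sigma(k)}\sigma(k)$ for $1\le k\le n$, so that $\tilde\sigma(-k)=-\tilde\sigma(k)$ always. A ``$2$-cycle'' in $S_{2n}$ under this identification, being an element of $H\subseteq(\Z/2)^n\rtimes S_n$, must respect this sign symmetry: if $\tilde\sigma$ is a transposition $(i\; j)$ with $i,j$ among $\{\pm1,\dots,\pm n\}$, then the pair $\{-i,-j\}$ must also be fixed or swapped by $\tilde\sigma$, and the only way a single transposition can do this is $j=-i$. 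Hence the hypothesis forces $\tilde\sigma=(k\;-k)$ for some $k$, i.e. $\tilde\sigma=((c_1,\dots,c_n),\mathrm{id})$ with $c_k=-1$ and $c_l=0$ for $l\ne k$.

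First I would establish exactly this normal form for the given $2$-cycle. Next, using surjectivity of $H\to S_n$, for any index $i\in\{1,\dots,n\}$ I would pick $\tilde\tau=((b_1,\dots,b_n),\tau)\in H$ with $\tau$ sending $k$ to $i$ (for instance $\tau=(k\;i)$, or $\tau=\mathrm{id}$ if $i=k$), and then conjugate: $\tilde\tau\,\tilde\sigma\,\tilde\tau^{-1}=(\tilde\tau(k)\;\tilde\tau(-k))=(i\;-i)$, since $\tilde\tau(-k)=-\tilde\tau(k)$. This shows $H$ contains the full subgroup $(\Z/2)^n\times\{1\}$ of ``sign flips''. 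Combined with surjectivity onto $S_n$, the subgroup generated is all of $(\Z/2)^n\rtimes S_n$, which proves $H=(\Z/2)^n\rtimes S_n$.

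The only mild subtlety — and the step I would present most carefully — is the first one: justifying that a $2$-cycle sitting inside the embedded copy of $(\Z/2)^n\rtimes S_n$ must be a ``flip'' transposition $(k\;-k)$ and cannot be, say, $(i\;j)$ with $|i|\ne|j|$. This is forced by the relation $\tilde\sigma(-m)=-\tilde\sigma(m)$: a transposition $(i\;j)$ with $j\ne -i$ would move $-i$ to $-j\notin\{i,j\}$ unless $-i\in\{i,j\}$, and $-i=i$ is impossible while $-i=j$ is the excluded case — so no such transposition lies in the image of the embedding. Everything else is the routine conjugation bookkeeping already carried out in the proof of Lemma~\ref{lem:Galois_computation_3}, so I would simply write ``by the same argument as in Lemma~\ref{lem:Galois_computation_3}'' for the conjugation step and not repeat it in full.
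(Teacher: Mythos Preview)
Your proposal is correct and follows essentially the same approach as the paper: identify the $2$-cycle as a sign-flip $(k\;-k)$, conjugate by lifts of transpositions to obtain all $(i\;-i)$, and combine with surjectivity to get the whole group. In fact you supply a justification the paper omits, namely why any transposition in the embedded copy of $(\Z/2)^n\rtimes S_n$ must have the form $(k\;-k)$; the paper simply asserts this ``up to an ordering''.
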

\begin{proof}
By the second assumption, we can assume that, up to an ordering, $H$ contains an element $\tilde{\sigma}$ of the form 
$ \tilde{\sigma} =(n, -n).$
Let $1 \leq i\leq n$. By the first assumption, we can find an element $\tilde{\tau}=(b_1, b_2, \ldots, b_n, \tau) \in H$ such that 
$ \tau = (n, i).$
We can see that 
$\tilde{\tau} \tilde{\sigma} \tilde{\tau}^{-1} =  (i, -i).$
This shows that the element $(c_1, c_2, \ldots, c_n, 1) \in H$, where $c_i=-1$ and $c_k=0$ if $k \not \in \{i\}.$ Consequently, $H$ contains  $(\Z/2)^n \times {1}.$ By our assumption, the natural map $H \to S_n$ is surjective, we can conclude that $H$ equals  $(\Z/2)^n \rtimes S_n.$
\end{proof}
As a consequence, we have another test to detect the Galois group of $f_{\Delta}$ when its Galois group is $(\Z/2)^{h_{\Delta}} \rtimes S_{h_{\Delta}}.$ 
\begin{prop} \label{prop:galois_computation_full}
Let $f(x)$ be a monic reciprocal polynomial with integer coefficients of even degree $2n$. Let $g$ be the trace polynomial of $f$. Assume that 
\begin{enumerate}
    \item The Galois group of $g$ is $S_n.$
    \item There exists a prime number $q$ such that $f(x)$ has the following factorization in $\F_q(x)$
    \[ f(x)=p_2(x) h(x) ,\]
    where $p_2(x)$ is an irreducible polynomial of degree $2$, and $h(x)$ is a product of distinct irreducible polynomials of odd degrees.
\end{enumerate}
Then the Galois group of $f$ is $(\Z/2)^n \rtimes S_n.$ 
\end{prop}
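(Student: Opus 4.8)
The plan is to mimic the proof of Proposition~\ref{prop:galois_computation_4}, but to feed the Galois group $H=\Gal(\Q(f)/\Q)$, viewed inside $(\Z/2)^n\rtimes S_n\subset S_{2n}$, into Lemma~\ref{lem:Galois_computation_full} rather than Lemma~\ref{lem:Galois_computation_3}. So the whole argument reduces to verifying the two hypotheses of Lemma~\ref{lem:Galois_computation_full} for this $H$.

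First I would check surjectivity of $H\to S_n$. The quotient $\Gal(\Q(f)/\Q)\to\Gal(\Q(g)/\Q)$ is exactly the natural projection $(\Z/2)^n\rtimes S_n\to S_n$ restricted to $H$ (this is the exact sequence displayed just before Lemma~\ref{lem:sgn}, together with the identification of $\Q(g)$ as the fixed field of $(\Z/2)^n$), and by hypothesis (1) the image is all of $S_n$. Hence the first hypothesis of Lemma~\ref{lem:Galois_computation_full} holds.

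Second I would produce a $2$-cycle in $H$. Pick a prime $\ell$ above $q$ in the splitting field of $f$; since $q$ is chosen so that $f\bmod q$ is separable (its factorization into distinct irreducibles in hypothesis (2) forces $q\nmid\mathrm{disc}(f)$, and in particular $q$ is unramified), the Frobenius at $\ell$ is a well-defined conjugacy class in $H$, and by the Dedekind/Frobenius correspondence its cycle type on the $2n$ roots of $f$ matches the degrees of the irreducible factors of $f\bmod q$. By hypothesis (2) that cycle type is: one cycle of length $2$ (from $p_2$), together with cycles of odd lengths (from the odd-degree factors of $h$) — here one should note, exactly as in the proof of Proposition~\ref{prop:Galois_computation_1}, that a reciprocal polynomial's roots come in pairs $\{u,u^{-1}\}$ so that cycles pair up, but an odd-degree irreducible factor over $\F_q$ must be self-reciprocal and contributes a single cycle on the appropriate orbit; the degrees being odd is what matters. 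Some power of this Frobenius element kills all the odd cycles and leaves precisely the transposition coming from $p_2$. Thus $H$ contains a $2$-cycle, verifying the second hypothesis. (Strictly one should be a little careful that "$2$-cycle in $S_{2n}$" coming from $p_2$ is of the form $(k,-k)$, i.e.\ lies in $(\Z/2)^n\rtimes S_n$ as the sign-flip on one coordinate; but every element of $H$ automatically lies in this subgroup, so this is free.)

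With both hypotheses of Lemma~\ref{lem:Galois_computation_full} in hand, we conclude $H=(\Z/2)^n\rtimes S_n$, which is the claim. The main obstacle is the bookkeeping in the second step: making precise why raising the Frobenius to a suitable power isolates a single transposition, and confirming that the odd-degree hypothesis on the factors of $h$ is exactly what guarantees those cycles disappear while the $2$-cycle survives (its square is trivial, so one must instead use that it is already a single cycle and take the product-of-odd-cycles component to the identity via an odd exponent). This is the same maneuver already used in Proposition~\ref{prop:Galois_computation_1}(3) and Proposition~\ref{prop:Galois_computation}(3), so I would simply cite that pattern rather than re-derive it; everything else is immediate from the results quoted above.
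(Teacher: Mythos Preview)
Your proposal is correct and follows essentially the same approach as the paper: the paper presents Proposition~\ref{prop:galois_computation_full} as an immediate consequence of Lemma~\ref{lem:Galois_computation_full}, with the implicit argument being exactly the one you spell out (surjectivity onto $S_n$ from hypothesis~(1), and a $2$-cycle in $H$ obtained by raising the Frobenius at $q$ to the lcm of the odd cycle lengths). Your parenthetical observation that any single $2$-cycle in $(\Z/2)^n\rtimes S_n\subset S_{2n}$ is automatically of the form $(k,-k)$ is correct and worth noting, since Lemma~\ref{lem:Galois_computation_full} tacitly uses this.
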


\begin{rmk}
We note that Proposition \cite[Proposition 4.14]{[MTT3]} and Proposition \ref{prop:galois_computation_full} look quite similar from a theoretical point of view. However, from a computational point of view, Proposition \ref{prop:galois_computation_full} is more effective since the degree of $g$ is half of the degree of $f$. For example, using the criterion described by \cite[Proposition 4.14]{[MTT3]}, we are able to compute the Galois group of $f_{\Delta}$ for $p$ at most $700$ (taking into account the running time of our algorithms). However, if we use the criterion mentioned in Proposition \ref{prop:galois_computation_full}, we are able to compute the Galois group for larger $p$ (currently for $p \leq 1000$). Furthermore, the running time is also faster.  

\end{rmk}

The data required to compute the Galois groups of $f_{\Delta}$ and $g_{\Delta}$ with $\Delta \in \{-4p, 4p, -3p, 3p \}$ is available on the Github repository \cite{[codes]}. Specifically, the data consists of the triple $(q_1, q_2, q_3)$ discussed \cite[Proposition 4.10]{[MTT3]}  whenever it exists (it seems that it always exists). It also contain the quadruple $(q_1, q_2, q_3, q_4)$ described by \cref{prop:galois_computation_4}  and \cref{prop:galois_computation_full}. Due to the computational complexity of our algorithms, we will restrict our search to the range 
\begin{equation} \label{eq:range} \max \{q_1, q_2, q_3, q_4 \} < 10^5. 
\end{equation} 

Below we described some concrete examples for the calculation of the Galois groups of $f_{\Delta}$ and $g_{\Delta}.$

\subsection{$\Delta =4p$ where $p \equiv 3 \pmod{4}$.}
We first search for the quadruple $(q_1, q_2, q_3, q_4)$ as required by Proposition \ref{prop:galois_computation_full}.
\begin{rmk}
For most primes $p \leq 547$, except $p=19$, we can always find the quadruple $(q_1, q_2, q_3, q_4)$. It turns out that Galois group of $f_{\Delta}$, where $\Delta=4\times 19$, is not $(\Z/2\Z)^{8}\rtimes S_{8}$. In this case 
\[f_\Delta(x)=x^{16} + x^{15} + 2x^{14} + 3x^{12} - x^{11} + 2x^{10} + 3x^8 + 2x^6 - x^5 + 3x^4 + 2x^2 + x + 1.\]
We also observe that 
 $f_{\Delta}(1) f_{\Delta}(-1) = 18^2.$
Consequently, by Lemma \ref{prop:discriminant}, we know that the discriminant of $f_{\Delta}$ is a perfect square. By Corollary \ref{cor:smaller_galois}, we conclude that the Galois group of $f_{\Delta}$ is a subgroup of the semi-direct product $ \ker(\Sigma') \rtimes S_8$ 
where $\Sigma'$ is the summation map 
\[ \Sigma': (\Z/2)^8 \to \Z/2 .\] 
To compute the Galois group of $f_{\Delta}$, we will use Proposition \ref{prop:galois_computation_4}. In fact, Table 3 shows that the Galois group of $g_{\Delta}$ is $S_8$. Additionally, at $q=227$, the factorization of $f_{\Delta}$ is 
\begin{align*}
&(x^2 + 153x + 1) (x^4 + 177x^3 + 43x^2 + 177x + 1) \times \\ &  (x^5 + 44x^4 + 148x^3 + 23x^2 + 196x + 207) (x^5 + 81x^4 + 101x^3 + 38x^2 + 134x + 34).
\end{align*} 
Therefore, by Proposition \ref{prop:galois_computation_4}, we conclude that the Galois group of $f_{\Delta}$ is exactly $ \ker(\Sigma') \rtimes S_8$. 
\end{rmk}


\subsection{The case $\Delta =-4p$ with $p \equiv 1 \pmod{4}$} In this case, we observe that for $p < 1000$, the Galois group of $f_{\Delta}(x)$ and $g_{\Delta}(x)$ are both as large as possible.

\subsection{The case $\Delta=3p$ with $p \equiv 3 \pmod{4}$}
From the data in \cite{[codes]} and Proposition \ref{prop:galois_computation_full}, we conclude that for $p<1000$, the Galois groups of $f_{\Delta}(x)$ and $g_{\Delta}(x)$ are both as large as possible

\subsection{The case $\Delta =-3p$ with $p \equiv 1 \pmod{4}$} 
It turns out that when $p \equiv 5 \pmod{8}$, there is an extra symmetry which puts some constraints on the Galois group of $f_{\Delta}.$ More precisely, we have the following proposition. 

\begin{prop} \label{prop:extra_symmetry} 
If $p\equiv 5\pmod 8$ and $\Delta=-3p$ then ${\rm disc}(f_\Delta)$ is a perfect square. 
\end{prop}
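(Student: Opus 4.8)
The plan is to apply Proposition~\ref{prop:discriminant} to $f=f_\Delta$, a monic reciprocal polynomial of even degree $2n$ with $n=p-2$ and trace polynomial $g=g_\Delta$. That proposition gives ${\rm disc}(f_\Delta)=s\cdot{\rm disc}(g_\Delta)^2$ with $s=(-1)^{n}f_\Delta(1)f_\Delta(-1)$, so it suffices to show that $s$ is a perfect square. Since $p$ is an odd prime with $p\geq 5$, the exponent $n=p-2$ is odd, so $s=-f_\Delta(1)f_\Delta(-1)$, and everything reduces to evaluating $f_\Delta$ at $\pm1$.

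First I would compute $f_\Delta(1)$. Since $\Delta=-3p<0$, the character $\chi_\Delta$ is odd, so by Proposition~\ref{prop:mult_of_1} the point $x=1$ is a simple root of $F_\Delta$, exactly cancelled by the single factor $\Phi_1(x)=x-1$ in the denominator of $f_\Delta$. Writing $F_\Delta(x)=(x-1)G(x)$ with $G(1)=F_\Delta'(1)$ and using $\Phi_3(1)=3$, $\Phi_p(1)=p$, one gets $f_\Delta(1)=-G(1)/(3p)=-F_\Delta'(1)/(3p)$. By~\eqref{eq:F'(1)}, $F_\Delta'(1)=\sum_{a=1}^{D-1}\chi_\Delta(a)a=D\,B_{1,\chi_\Delta}=3p\,B_{1,\chi_\Delta}$, hence $f_\Delta(1)=-B_{1,\chi_\Delta}$. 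Finally, since $\Delta<0$ and $|\Delta|=3p>4$, the class number formula for imaginary quadratic fields gives $B_{1,\chi_\Delta}=-h(-3p)$, so $f_\Delta(1)=h(-3p)$.

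Next I would compute $f_\Delta(-1)$. One has $\Phi_1(-1)=-2$ and, because $p$ is odd, $\Phi_3(-1)=\Phi_p(-1)=1$; thus the denominator of $f_\Delta$ at $x=-1$ equals $(-1)(-2)(1)(1)=2$, so $f_\Delta(-1)=-F_\Delta(-1)/2$. Running the computation in the proof of Proposition~\ref{prop:roots_1} with $D=3p$ gives $F_\Delta(-1)=2\chi_\Delta(2)\,\tfrac1c\,h(-3p)$, where $c=1$ if $D\equiv 7\pmod 8$ and $c=\tfrac13$ if $D\equiv 3\pmod 8$. Here the hypothesis $p\equiv 5\pmod 8$ enters decisively: it forces $D=3p\equiv 7\pmod 8$, so $c=1$, and it forces $\Delta=-3p\equiv 1\pmod 8$, so $\chi_\Delta(2)=\left(\tfrac{\Delta}{2}\right)=1$. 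Therefore $F_\Delta(-1)=2h(-3p)$ and $f_\Delta(-1)=-h(-3p)$.

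Combining the two evaluations, $s=-f_\Delta(1)f_\Delta(-1)=-h(-3p)\bigl(-h(-3p)\bigr)=h(-3p)^2$, so ${\rm disc}(f_\Delta)=\bigl(h(-3p)\,{\rm disc}(g_\Delta)\bigr)^2$ is a perfect square. The individual computations are short; the only delicate point --- the part I would scrutinise most --- is the sign-and-congruence bookkeeping: the correct sign $B_{1,\chi_\Delta}=-h(-3p)$ of the generalised Bernoulli number of an odd quadratic character, the fact that $p\equiv 5\pmod 8$ makes $D\equiv 7\pmod 8$ (hence $c=1$), and the fact that it also makes $\chi_\Delta(2)=1$. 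All three facts genuinely use the congruence class of $p$; for other residues one of them fails and $s$ becomes $-h(-3p)^2$ or $-3h(-3p)^2$, which is not a square.
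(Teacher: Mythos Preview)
Your proof is correct and follows essentially the same route as the paper: apply Proposition~\ref{prop:discriminant}, evaluate $f_\Delta(1)$ via $F_\Delta'(1)=D\,B_{1,\chi}$ and $f_\Delta(-1)$ via the formula~\eqref{eq:F(-1)} for $F_\Delta(-1)$, then check that $p\equiv 5\pmod 8$ forces $D\equiv 7\pmod 8$ (so $c=1$) and $\chi_\Delta(2)=1$, yielding $s=h(-3p)^2$. In fact your sign bookkeeping is slightly more careful than the paper's own write-up, which silently drops the minus sign in the definition $f_\Delta(x)=-F_\Delta(x)/\bigl(x\Phi_1(x)\Phi_3(x)\Phi_p(x)\bigr)$ in both the $f_\Delta(1)$ and $f_\Delta(-1)$ computations; since the sign is dropped twice it cancels, but your version is cleaner.
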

\begin{proof}
One has
$
xf_{\Delta}(x)= \dfrac{F_\Delta(x)}{\Phi_1(x)}\frac{1}{\Phi_3(x) \Phi_p(x)}.
$
Taking the limit when $x\to 1$, one obtains
\[
f_\Delta(1)=F'_\Delta(1)\dfrac{1}{\Phi_3(1)\Phi_p(1)}=\dfrac{F'_\Delta(1)}{D}=B_{1,\chi}.
\]
Here, the last equality follows from Equation (\ref{eq:F'(1)}).

One has
$
B_{1,\chi}=-h(-D),
$
where $h(-D)$ is the class number of the quadratic imaginary field $\Q(\sqrt{-D})$. This follows from the formula (\cite[Theorem 4.9 (i)]{[Washington]}),
$
L(1,\chi)=-\dfrac{\pi}{\sqrt{D}}B_{1,\chi}
$,
and the Dirichlet class number formula for a quadratic imaginary field (see e.g. \cite[Formula (15), page 49]{[Davenport]}), $
h(-D)=\dfrac{\sqrt{D}}{\pi}L(1,\chi).
$

On the other hand,
\[
f_\Delta(-1)=\dfrac{F_\Delta(-1)}{(-1)\cdot(-2)\cdot 1\cdot 1}=\dfrac{F_\Delta(-1)}{2}=\frac{\chi(2)h(-D)}{c},
\]
here the last equality follows from Equation (\ref{eq:F(-1)}). Recall that $c= 1$ if   $D\equiv 7\pmod 8$, and $c=1/3$
  if $ D\equiv 3\pmod 8$.
Hence, by Proposition \ref{prop:discriminant}
\[
{\rm disc}{(f_\Delta)}=(-1)^{p-2}f_\Delta(1)f_\Delta(-1){\rm disc}(g_\Delta)^2= \dfrac{\chi(2)}{c}h(-D)^2{\rm disc}(g_\Delta)^2.
\]
If $p\equiv 5\pmod 8$ then $D=3p\equiv 7\pmod 8$ and thus $c=1$ and  $\chi(2)=1$. Hence 
\[
{\rm disc}{(f_\Delta)}=h(-D)^2{\rm disc}(g_\Delta)^2,
\]
which is a perfect square.
\end{proof}

\begin{cor} \label{cor:p=5(8)}
Assume that $p \equiv 5 \pmod{8}$. Then the Galois group of $f_{\Delta}$ is a subgroup of of $ \ker(\Sigma') \rtimes S_{p-2}$. Here $\Sigma'$ is the summation map 
\[ \Sigma': (\Z/2)^{p-2 } \to \Z/2 .\] 
\end{cor}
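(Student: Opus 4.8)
The plan is to combine Proposition~\ref{prop:extra_symmetry} with Corollary~\ref{cor:smaller_galois} in a direct way. First I would invoke Proposition~\ref{prop:extra_symmetry}: under the hypothesis $p\equiv 5\pmod 8$ with $\Delta=-3p$, the discriminant ${\rm disc}(f_\Delta)$ is a perfect square. This is precisely the input needed for Corollary~\ref{cor:smaller_galois}, so the argument is essentially a two-line deduction; the real content has already been established upstream.

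Concretely, recall from the earlier sections that $f_\Delta(x)$ is a monic reciprocal polynomial of even degree $2(p-2)$ (Proposition in Section~10, using Lemma~\ref{lem:reciprocal} and Proposition~\ref{prop:reciprocal1}), so here $h_\Delta=\deg(g_\Delta)=p-2$. Thus $\Gal(\Q(f_\Delta)/\Q)$ embeds, via its action on the $2h_\Delta$ roots of $f_\Delta$, into $(\Z/2\Z)^{h_\Delta}\rtimes S_{h_\Delta}\subset S_{2h_\Delta}$, exactly as set up in the introductory discussion of this section. Since ${\rm disc}(f_\Delta)$ is a perfect square by Proposition~\ref{prop:extra_symmetry}, every element of the Galois group acts as an even permutation on the roots, i.e.\ lies in the kernel of ${\rm sgn}\colon S_{2h_\Delta}\to\Z/2$. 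By Lemma~\ref{lem:sgn}, the restriction of ${\rm sgn}$ to $(\Z/2\Z)^{h_\Delta}\rtimes S_{h_\Delta}$ equals the summation map $\Sigma$, whose kernel is $\ker(\Sigma')\rtimes S_{h_\Delta}$ with $\Sigma'\colon(\Z/2\Z)^{h_\Delta}\to\Z/2$. Hence the Galois group of $f_\Delta$ is contained in $\ker(\Sigma')\rtimes S_{p-2}$, which is the claim.

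There is essentially no obstacle in this corollary itself: it is a formal consequence of results already proved. The only points to be careful about are bookkeeping, namely that $h_\Delta=p-2$ matches the degree computed in Section~10, and that the perfect-square conclusion of Proposition~\ref{prop:extra_symmetry} is exactly the hypothesis of Corollary~\ref{cor:smaller_galois}. The genuinely substantive work --- computing $f_\Delta(1)=B_{1,\chi}=-h(-D)$ and $f_\Delta(-1)=\chi(2)h(-D)/c$, and invoking Proposition~\ref{prop:discriminant} to turn $(-1)^{p-2}f_\Delta(1)f_\Delta(-1)$ into a square when $D\equiv 7\pmod 8$ --- all happens inside the proof of Proposition~\ref{prop:extra_symmetry}, not here.

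Thus the proof reads simply: by Proposition~\ref{prop:extra_symmetry}, ${\rm disc}(f_\Delta)$ is a perfect square; since $\deg(f_\Delta)=2(p-2)$, we have $h_\Delta=p-2$, and Corollary~\ref{cor:smaller_galois} applies verbatim to give that $\Gal(\Q(f_\Delta)/\Q)$ is contained in $\ker(\Sigma')\rtimes S_{p-2}$ where $\Sigma'\colon(\Z/2\Z)^{p-2}\to\Z/2$ is the summation map. I would also remark, as the paper does in the surrounding text and in the Remark after Corollary~\ref{cor:smaller_galois}, that this forces $f_\Delta$ to be reducible modulo every prime $q$, since an irreducible reduction would produce a $2(p-2)$-cycle in the Galois group, which is an odd permutation and therefore excluded.
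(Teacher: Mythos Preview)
Your proposal is correct and matches the paper's approach exactly: the corollary is stated immediately after Proposition~\ref{prop:extra_symmetry} with no separate proof, precisely because it follows by applying Corollary~\ref{cor:smaller_galois} (with $h_\Delta=p-2$) to the perfect-square discriminant established there.
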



\subsection{A conjecture} 
From these experimental data, it seems reasonable to make the following prediction. 
\begin{conj} \label{conj:main}
Let $\Delta \in \{4p, -4p, 3p, -3p \}$ and $h_{\Delta}$ be the degree of $g_{\Delta}$. Then
\begin{enumerate}
    \item If $\text{disc}(f_{\Delta})$ is not a perfect square then the Galois group of $f_{\Delta}$ is equal to  $(\Z/2\Z)^{h_{\Delta}}\rtimes S_{h_{\Delta}}$. 
    \item If $\text{disc}(f_{\Delta})$ is a perfect square then the Galois group of $f_{\Delta}$ is equal to $\ker(\Sigma') \rtimes (\Z/2)^{h_{\Delta}}$ where $\Sigma'$ is the summation map 
    \[ \Sigma' : (\Z/2)^{h_{\Delta}} \to \Z/2 .\] 
\end{enumerate}

In particular, the Galois group of $g_{\Delta}(x)$ is $S_{h_{\Delta}}.$
\end{conj}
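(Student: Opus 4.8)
The plan is to prove the conjecture in two logically independent halves: (i) for each of the four families $\Delta\in\{4p,-4p,3p,-3p\}$ decide exactly when $\mathrm{disc}(f_\Delta)$ is a perfect square, and (ii) show that in every case the Galois group of $f_\Delta$ is as large as the constraint of Corollary~\ref{cor:smaller_galois} allows, that is, $(\Z/2\Z)^{h_\Delta}\rtimes S_{h_\Delta}$ when $\mathrm{disc}(f_\Delta)$ is not a square and $\ker(\Sigma')\rtimes S_{h_\Delta}$ when it is. By Proposition~\ref{prop:discriminant} one has $\mathrm{disc}(f_\Delta)=s\cdot\mathrm{disc}(g_\Delta)^2$ with $s=(-1)^{h_\Delta}f_\Delta(1)f_\Delta(-1)$, so (i) reduces entirely to evaluating $f_\Delta(1)$ and $f_\Delta(-1)$. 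Exactly as in the proof of Proposition~\ref{prop:extra_symmetry}, one writes $f_\Delta(x)$ as $F_\Delta(x)$ divided by its known cyclotomic part and takes limits at $x=\pm 1$; using Lemma~\ref{lem:mult}, equations~\eqref{eq:F'(1)} and~\eqref{eq:F(-1)}, and the class-number evaluations of truncated character sums from \cite{[Berndt]} and \cite{[Girstmair]} that already recur in Sections 4 and 7--10, one obtains $f_\Delta(\pm1)$ as explicit rational multiples of powers of $h(-D)$ (or $h(-p)$, $h(-3p)$). Squareness of $s$ then becomes an elementary congruence condition on $p\bmod 8$ (and $p\bmod 3$); the $-3p$ case treated above gives squareness iff $p\equiv5\pmod8$, and I expect the remaining three families to be entirely analogous, if tedious.

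For (ii) the machinery is already available. Proposition~\ref{prop:galois_computation_full} reduces the generic case to two inputs: (a) $\mathrm{Gal}(g_\Delta)=S_{h_\Delta}$, and (b) the existence of a prime $q$ at which $f_\Delta\bmod q$ is an irreducible quadratic times a product of distinct irreducibles of odd degree; Proposition~\ref{prop:galois_computation_4} covers the square-discriminant case under the slightly stronger local hypothesis (an irreducible quadratic, an irreducible quartic, and distinct odd-degree factors). For (a) one would invoke Proposition~\ref{prop:Galois_computation}, producing a triple $(q_1,q_2,q_3)$ realizing respectively a full $h_\Delta$-cycle, an $(h_\Delta-1)$-cycle, and a transposition-times-odd-cycles in the factorization of $g_\Delta$. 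So the whole conjecture is reduced to existence statements about the cycle types occurring among Frobenius elements of $g_\Delta$ and of $f_\Delta$.

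The main obstacle is precisely that reduction's residue. If we already knew $\mathrm{Gal}(g_\Delta)=S_{h_\Delta}$, Chebotarev would furnish infinitely many primes of each required cycle type for free — but that is exactly what must be proved, so one cannot assume it. What is genuinely needed is an unconditional argument forcing $\mathrm{Gal}(g_\Delta)$ to contain a transposition, an $(h_\Delta-1)$-cycle and an $h_\Delta$-cycle (equivalently, irreducibility of $g_\Delta$ plus two further local conditions) for this specific family whose coefficients are built from Legendre symbols, and no such mechanism is presently known; the same difficulty already obstructs the earlier irreducibility conjectures for $f_\Delta$. I would therefore not expect a complete proof. A realistic target is: prove part (i) in full; prove unconditionally the obstruction direction (the Galois group lies in $\ker(\Sigma')\rtimes S_{h_\Delta}$ exactly when $\mathrm{disc}(f_\Delta)$ is a square, which is immediate from Proposition~\ref{prop:discriminant} and Corollary~\ref{cor:smaller_galois}); and prove the conjecture conditional on $\mathrm{Gal}(g_\Delta)=S_{h_\Delta}$ via Propositions~\ref{prop:galois_computation_4} and~\ref{prop:galois_computation_full} together with an effective search for the auxiliary local primes. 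The unconditional determination of $\mathrm{Gal}(g_\Delta)$ remains the crux and seems to require a new idea.
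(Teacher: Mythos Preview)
The statement is a \emph{conjecture} in the paper, not a theorem: the authors offer no proof and present it explicitly as a prediction supported by the numerical tables. So there is no paper proof to compare your proposal against.

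Your assessment is accurate and in fact goes further than the paper does. You correctly isolate the genuine obstruction: establishing $\Gal(g_\Delta)=S_{h_\Delta}$ unconditionally is the crux, and the paper neither proves this nor claims to --- all of the Galois-group determinations in Section~11 are computational verifications for $p$ in a bounded range via Propositions~\ref{prop:Galois_computation}, \ref{prop:galois_computation_4}, \ref{prop:galois_computation_full}, exactly the machinery you invoke. Your plan for part~(i), computing $f_\Delta(\pm1)$ via limits of $F_\Delta$ divided by its cyclotomic part and reducing squareness of $s$ to congruence conditions on $p$, is the right one and follows the template of Proposition~\ref{prop:extra_symmetry}; note however that the paper only carries this out for $\Delta=-3p$ and does not attempt the analogous characterization for $4p$, $-4p$, $3p$ (the isolated $p=19$ exception for $\Delta=4p$ is observed, not explained). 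Your conditional reduction of part~(ii) to $\Gal(g_\Delta)=S_{h_\Delta}$ plus a Chebotarev-type local input is also correct in spirit, though as you note the Chebotarev step is circular without the $S_{h_\Delta}$ conclusion already in hand. In short: your diagnosis that no complete proof is currently available, and that the missing ingredient is an unconditional handle on $\Gal(g_\Delta)$, matches the paper's own stance.
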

\begin{rmk}
    Our experimental data in \cite{[codes]} shows that Conjecture \ref{conj:main} holds for $p \leq 1000.$
\end{rmk}

\section*{Acknowledgments}
We would like to thank Professor Franz Lemmermeyer for his encouragement to study this topic. The thread \cite{[Lemmermeyer]} initiated by him has been a critical inspiration for us. The second named author would like to thank William Stein for his help with the platform Cocalc where our computations are based. He also thanks to the organizers of the number theory seminars at UIC and Northwestern as well as the organizers of the CTNT 2022 Conference for providing him an opportunity to present parts of this work.  We would also like to thank Professors 
Andrew Granville, Peter Moree, and Ramin Takloo-Bighash for their interest and encouragement related to our work on Fekete polynomials. Last but not least, we are also grateful to the referee for their comments and valuable suggestions which we have used to improve our exposition.

\end{document}